\newtheorem{theorem}{Theorem}
\newtheorem{lemma}[theorem]{Lemma}
\newtheorem{corollary}[theorem]{Corollary}
\newtheorem{proposition}[theorem]{Proposition}
\newtheorem{remark}[theorem]{Remark}
\newtheorem{assumption}[theorem]{Assumption}
\newtheorem{definition}[theorem]{Definition}
\newtheorem{example}{Example}
\newcommand{\bN}{\mathbb{N}}
\newcommand{\bR}{\mathbb{R}}
\newcommand{\bC}{\mathbb{C}}
\newcommand{\bZ}{\mathbb{Z}}
\renewcommand{\Re}{\operatorname*{Re}}
\renewcommand{\Im}{\operatorname*{Im}}
\newcommand{\real}{\operatorname*{Re}}
\newcommand{\imag}{\operatorname*{Im}}
\newcommand{\tol}{\mathrm{tol}}
\newcommand{\opi}{\mathrm{i}}
\newcommand{\e}{\mathrm{e}}
\newcommand{\vv}{{\mathbf{v}}}
\newcommand{\lv}{{\mathbf{l}}}
\begin{document}

\title{Generalized Convolution Quadrature for non smooth sectorial problems}
\author{J. Guo \thanks{School of Science and Engineering, The Chinese University of Hong Kong, Shenzhen, Guangdong, 518172, P.R.
		China; And Shenzhen International Center for Industrial and Applied Mathematics, Shenzhen Research
		Institute of Big Data, Guangdong 518172, China.  Email: {\tt jingguo@cuhk.edu.cn }} \and M. Lopez-Fernandez\thanks{Department of Mathematical Analysis, Statistics and O.R., and Applied Mathematics.
Fa\-cul\-ty of Sciences. University of Malaga.
Bulevar Louis Pasteur, 31
29010  Malaga,  Spain. Email: {\tt maria.lopezf@uma.es}}}
\maketitle

\begin{abstract}
We consider the application of the generalized Convolution Quadrature (gCQ) to approximate the solution of an important class of sectorial problems. The gCQ is a generalization of  Lubich's Convolution Quadrature (CQ) that allows for variable steps. The available stability and convergence theory for the gCQ requires non realistic regularity assumptions on the data, which do not hold in many applications of interest, such as the approximation of subdiffusion equations. It is well known that for non smooth enough data the original CQ, with uniform steps, presents an order reduction close to the singularity. We generalize the analysis of the gCQ to data satisfying realistic regularity  assumptions  and provide sufficient conditions for stability and convergence on arbitrary sequences of time points. We consider the particular case of graded meshes and show how to choose them optimally, according to the behaviour of the data. An important advantage of the gCQ method is that it allows for a fast and memory reduced implementation. We describe how the fast and oblivious gCQ can be implemented and illustrate our theoretical results with several numerical experiments.
\end{abstract}

{\bf Keywords:} fractional integral, fractional differential equations, generalized convolution quadrature, variable steps, graded meshes.

{\bf AMS subject classifications:} 65R20, 65L06, 65M15,26A33,35R11.

\section{Introduction}
Given a function $f$, we consider the approximation of linear Volterra convolutions in the abstract form
\begin{equation}\label{convolution}
c(t) = \int_0^t k(t-s)f(s)\, ds,
\end{equation}
where the convolution kernel $k$ is allowed to be vector-valued, on an arbitrary time mesh
\begin{equation}\label{mesh}
\Delta:= 0 < t_1 < \dots < t_N=T.
\end{equation}
We also consider the resolution of convolution equations like \eqref{convolution}, where the data is $c$ and the goal is to approximate $f$. The efficient discretization of \eqref{convolution} and, more generally, equations involving memory terms like \eqref{convolution}, has been an active field of research for decades due to the many applications involved. Indeed the kernel can be scalar valued, such  as $k(t)=t^{\gamma}$ with $\gamma>-1$, as it is the case for the fractional integral \cite{BanLo20,JingRebecca_fi}, transparent boundary conditions \cite{Schadle+_tbc,LuScha2002}, impedance boundary conditions \cite{HipLoPa2014}, etc.,  can be a matrix or even an operator, such as $k(t)=\e^{At}$, meaning  the semigroup generated by the operator $A$, as it appears in the variation of constants formula for abstract initial value problems \cite{SchaLoLuPa}, see also \cite{CuLuPa} for other operator-valued kernels, or can even be a distribution, like the Dirac delta, in the boundary integral formulation of wave problems \cite[Chapter 2]{BanSay22}.

Lubich's Convolution Quadrature (CQ) \cite{Lu88I} is nowadays a very well established family of numerical methods for the approximation of these problems, but its construction and analysis is strongly limited to uniform time meshes, where $t_n=nh$, with $h=\frac{T}{N}$ fixed. CQ schemes never require the evaluation of the convolution kernel $k$, which, as mentioned above, is allowed to be weakly singular at the origin, be defined in a distributional sense \cite{Lu94} or even not known analytically. Instead, the Laplace transform $K$ of $k$ is used, the so called {\em transfer operator}, namely
\begin{equation}\label{genK}
K(z)=\int_0^{\infty} \e^{-zt}k(t) \,dt, \qquad \mbox{ defined for } \real z > \sigma,
\end{equation}
for a certain abscissa $\sigma \in \bR$. The application of the CQ requires a bound of the form
\begin{equation}\label{genboundK}
\|K(z)\| \le M |z|^\mu,  \qquad \mbox{ for } \real z > \sigma,
\end{equation}
for some $M>0$ and $\mu \in  \bR$, with $\mu>0$ typically holding in applications to hyperbolic time-domain integral equations  \cite{Lu94}. In this rather general setting, the generalization of Lubich's CQ to variable steps has been developed in \cite{LoSau13,LoSau15apnum,LoSau16}, presenting the so-called generalized Convolution Quadrature (gCQ), with a special focus on the resolution of hyperbolic time-domain integral equations. The available stability and convergence analysis of the gCQ requires, in general, strong regularity hypotheses on the data. In particular, for $\mu = -\alpha$, with $\alpha \in (0,1)$, convergence of the first order can be proven on a general mesh only if the data $f$ is of class $C^3([0,T],B)$ and satisfies $f(0)=f'(0)=0$, \cite[Theorem 16]{LoSau16}. Notice that the result in \cite{LoSau16} improves the first one proven in \cite{LoSau13}, where an {\em a priori} regularization step is required for problems satisfying Assumption~\ref{assumptionK}, and the data is required to satisfy $f\in C^4([0,T],B)$ with $f^{(\ell)}(0)=0$, with $0\le \ell \le 3$, see also \cite[Theorem 2.32]{BanSay22}.

There are however important applications, such as the time integration of parabolic problems \cite{LuOs}, the approximation of fractional integrals and derivatives \cite{Lu2004} and the resolution of fractional diffusion problems \cite{CuLuPa}, where $K$ can  be holomorphically extended to the complement of an acute sector around the negative real axis, and it satisfies a bound of the type
\begin{equation}\label{sectorial}
\| K(z) \| \le M |z|^{-\alpha}, \quad |\arg(z)| < \pi-\varphi,
\end{equation}
for some $\varphi \in (0,\frac{\pi}{2})$, $M = M(\varphi)>0$, and $\alpha >0$. If $K$ satisfies \eqref{sectorial}, then it is said to be a {\em sectorial Laplace transform} and we also say that $k$ is a {\em sectorial kernel}. The current theory for the generalization of the CQ to variable steps is suboptimal for these problems, where the extra regularity of the kernel should allow to relax the smoothness requirements on the data $f$ (or $c$, if we are solving the convolution equation). In the present paper we address the {\em a priori} analysis of the gCQ for this important class of problems. More precisely,  as a first step in the development of this theory, we consider following class of convolution operators.

\begin{assumption}\label{assumptionK}
Let $B$ and $D$ be some normed vector spaces and $\mathcal{L}(B,D)$ the space of linear continuous mappings from $B$ to $D$, with the usual operator norm
\[
\|F\|_{D \leftarrow B} = \sup_{u\in B, u \ne 0} \frac{\|Fu\|_D}{\|u\|_B}.
\] 
We assume that the Laplace transform of the kernel $k$ in \eqref{convolution}, is an operator valued mapping $K: \bC \to \mathcal{L}(B,D)$ which satisfies
\begin{enumerate}
\item $K$ is holomorphic in any sector $|\arg(z)|<\pi$.
\item There exist $M>0$ and $\alpha \in (0,1)$ such that
\begin{equation}\label{Kz}
\Vert	K(z)\Vert_{D \leftarrow B}\le M  |z|^{-\alpha},  \qquad |\arg(z)|<\pi.
\end{equation}
\item $K$ is continuous in the upper half plane $\imag z \ge 0$, with the possible exception of $z=0$, and similarly on the lower edge of the cut.
\end{enumerate}
\end{assumption} 
In what follows, we will omit subindexes in the norms when they are clear from the context.
Important examples of kernels satisfying Assumption~\ref{assumptionK} appear often in the li\-te\-ra\-tu\-re, such as:
\begin{enumerate}
\item The fractional integral of order $\alpha \in (0,1)$, where $k(t)=\frac{t^{\alpha-1}}{\Gamma(\alpha)}$ and $K(z)=z^{-\alpha}$.

\item The resolvent of a symmetric positive definite elliptic operator $A$ or a discrete version of it by the standard finite difference or the Finite Element method at $z^{\alpha}$, with $\alpha \in (0,1)$, provided that the spectrum of $A$ is confined to the negative real axis, away from 0. Then $K(z)=(z^{\alpha}I-A)^{-1}$ is either an operator between appropriate functional spaces or just a matrix and falls into this class of problems, since typically the resolvent of $A$ is bounded like
    \[
    \| (z I -A)^{-1} \| \le M |z|^{-1}, \quad \mbox{ for } |\arg(z)| > \pi-\varphi, \quad M=M(\varphi) >0,
    \]
    and any $\varphi \in (0,\frac{\pi}{2})$. The approximation of subdifussion equations by applying the CQ and also the gCQ to discretize the fractional derivative leads to the analysis of these methods for $k(t)=\mathcal{L}^{-1}[(z^{\alpha}I-A)^{-1}]$. This application is carefully discussed in Section~\ref{sec:fracdiffeq}.

%\item Models in viscoelasticity often include memory terms of the type \eqref{convolution} with kernels satistying Assumption~\ref{assK}, see for instance \cite{} 

\item Models for wave propagation in lossy media, such as the Westervelt equation studied in \cite{BakerBanjaiPtashnyk2023}, include damping terms of the form considered in this manuscript.
\end{enumerate}

Under Assumption~\ref{assumptionK}, the convolution kernel $k$ can be expressed as the inverse Laplace transform of $K$ by means of a real integral representation, see for instance \cite[Theorem 10.7d]{Henrici_II}, more precisely we can write
\begin{equation}\label{realiLT}
k(t) = \int_0^{\infty} e^{-xt} G(x) \,dx,
\end{equation}
with $G$ given by
\[
G(x) = \frac{1}{2\pi\opi} \left(K(\e^{-\opi \pi} x)-K(\e^{\opi \pi} x)\right),
\]
and bounded by
\begin{equation}\label{boundG}
\|G(x)\| \le \frac{M}{\pi} x^{-\alpha}, \qquad \mbox{ for } \ x>0.
\end{equation}
For the regularity of the data $f$, we assume that
\begin{equation}\label{dataform}
f(t)=t^\beta g(t), \ \mbox{ with $\beta >-1$,\quad $g$ sufficiently smooth}.
\end{equation}
%$f(t)=O(t^{\beta})$, as $t\to 0$, with $\beta >-1$,
Notice that $g$ continuous in $[0,T]$ is enough for \eqref{convolution} to be well defined under the assumptions on $k$. In this situation, the original CQ methods are well known to suffer an order reduction. Typically a CQ formula of maximal order $p$ is able to achieve that order in the $L^{\infty}$ norm only if the zero-extension of the data $f$ to $t<0$ is smooth enough, depending on $\alpha$ and $p$ . This question has been thoroughly analyzed in the literature, starting from the introduction of the CQ itself in \cite{Lu88I}. More precisely, we follow the operational notation introduced by Lubich and set
\begin{align}
K(\partial_t)f & = k*f, \qquad \mbox{the continuous convolution \eqref{convolution}}, \\
K(\partial^{h}_t)f, &  \qquad \mbox{the approximation of \eqref{convolution} by the original CQ with step $h$},  \\
K(\partial^{\Delta}_t)f, &  \qquad \mbox{the approximation of \eqref{convolution} by the gCQ on the time mesh $\Delta$ in \eqref{mesh}}.
\end{align}\label{opnotation}
We consider the particular case $f(t)=t^{\beta}\vv$, with a $t$-independent $\vv \in B$, cf. \cite{CuLuPa}.
Then, for $p=1$, the following error estimates follow from \cite[Theorem 5.2, Corollary 3.2]{Lu88I}
\begin{equation}\label{errorCQ}
\left\|\left[K(\partial_t)f\right](t_n)-\left[K(\partial^{h}_t)f\right]_n \right\| \le
\left\{
\begin{array}{ll}
C t_n^{\alpha-1} h^{\beta+1}, \ & \mbox{ for } -1 < \beta \le  0,  \\
C t_n^{\alpha + \beta -1} h, \  & \mbox{ for }  \beta \ge  0.
\end{array}
\right.
\end{equation}
The above estimates imply that the order of convergence close to the origin is actually $\alpha+\beta$, and that the maximal order of convergence (which is one) is only achievable pointwise at times away from the origin and for $\beta\ge 0$. Similar error estimates in time hold for the application of the CQ to linear subdiffusion equations \cite{jin2016two}, whose solutions are known to satisfy \eqref{dataform}, see \cite[Section 8]{CuLuPa}.
For approximations of order higher than one, correction terms are needed in order to achieve the full maximal order at a given time point away from zero but, in any case, the convergence rate still deteriorates close to the singularity \cite{CuLuPa,JinLiZhou2017}.

The real integral representation in \eqref{realiLT} of the convolution kernel is much simpler to deal with than the usual contour integral representation along a Hankel contour in the complex plane, which is used in the analysis of more general sectorial problems \cite{LuOs}. This allows us to derive rather clean error estimates for the gCQ method and still brings quite a lot of insight for the analysis of more general sectorial problems and also of higher order gCQ schemes, which we do not address in the present paper. On the other hand, the definition of the gCQ based on \eqref{realiLT} is equivalent to the original one in \cite{LoSau13} and thus all the important properties proven in \cite{LoSau13} still hold, such as the preservation of the composition rule, see Remark~\ref{remark:equivalence_gCQ}. Being able to use  \eqref{realiLT} also brings  important advantages from the implementation point of view. In particular, we refer to the fast and oblivious CQ algorithm for the fractional integral and associated fractional differential equations developed in \cite{BanLo19}. In this paper,  we generalize the algorithm in \cite{BanLo19} to the gCQ method, and show how both the computational complexity and the memory can be drastically reduced even for variable step approximations of the convolution problems under study.

%%%%%%%%%%%%%%%%%%%%%%%%%%%%%%%%%%%%%%%%%%%%%%%%%%%%%%%%%%%%%%%%%%%%%%%%%%%%%%%%%%%%%%%%%%%%%%%%%%%%%%%%%%%%%%%%%%%%%%%

The real integral representation of the kernel $t^{\alpha-1}/\Gamma(\alpha)$ has been recently used in \cite{BanMak2022} to derive {\em a posteriori} error formulas for both the L1 scheme and the gCQ of the first order. For the popular L1 method the maximal order of convergence is known to be $2-\alpha$, higher than for the gCQ of the first order. In \cite{BanMak2022}, the maximal order $2-\alpha$ is shown to be achievable in the $L^2$ norm by using appropriate graded meshes. A posteriori error formulas are also derived  for the gCQ method of the first order, but their asymptotic analysis is not developed.  The authors point indeed to the complicated a priori error analysis which is required for this. In this paper we address precisely this a priori error analysis and derive error bounds in the $L^{\infty}$ norm for the gCQ method of the first order. We obtain optimal error estimates of the maximal order for general meshes and, as a particular case, for appropriate graded meshes. By doing this, we fill an important gap in the theory in \cite{LoSau16}, for an important family of applications.

As we have already mentioned earlier, our analysis is conducted on an arbitrary time grid, but  we consider in detail the particular choice of graded meshes, defined by
\begin{equation}\label{gmesh}
t_n= (n\tau)^{\gamma}, \qquad \tau=\frac{T^{1/\gamma}}{N},\quad n=1,\dots,N.
\end{equation}
For \eqref{gmesh} we derive the optimal value of the grading parameter $\gamma$ as a function of $\alpha$ and $\beta$ in order to achieve full order of convergence. %For some specific problems, optimal graded meshes has been proposed for the L1 method, see for instance \cite{Mus}.
%This particular choice of time mesh has been extensively studied for the L1 scheme, see for instance  \cite{Mus,StyOGra}.
%Quite recently, other variable step schemes specific for subdiffusion equations have been proposed in the literature, based on the choice of graded meshes to deal with the singularity at the origin of the solution. On the one hand there are variable step versions of the L1 scheme on graded meshes \cite{Mus,StyOGra} and the more general analysis in \cite{LiaoLiZhang}. A fast implementation of the L1 method with variable steps is proposed in \cite{JiLiaoZh20}. We mention also a novel exponential convolution quadrature method in \cite{LiMa}, designed for nonlinear subdiffusion problems which achieves optimal order of convergence on special graded time meshes.

The paper is organized as follows. In Section~\ref{sec:gcq} we recall the definition of the generalized Convolution Quadrature method. In Section~\ref{sec:analysis} we analyze this method for $f$ satisfying \eqref{dataform} and for sectorial kernels satisfying \eqref{realiLT}-\eqref{boundG}. In Section~\ref{sec:fracdiffeq} we address the application to linear subdiffusion equations. Sections~\ref{sec:algfi} and \ref{sec:algsubdiff} are devoted to implementation issues and numerical results are shown in Section~\ref{sec:experiments}.

\section{Generalized Convolution Quadrature of the first order}\label{sec:gcq}
We present the gCQ method for \eqref{convolution} under the assumptions \eqref{realiLT}-\eqref{boundG} in a different way from the derivation in \cite{LoSau13} and \cite{LoSau16}, which takes into account the specific properties of the convolution kernels under study. Indeed \eqref{realiLT} allows to write
\begin{equation}\label{convo_realint}
c(t) = \int_{0}^{\infty} G(x) \int_0^t \e^{-x(t-s)} f(s)\,ds \, dx
 =   \int_{0}^{\infty} G(x) y(x,t) \, dx,
\end{equation}
with $y(x,t)$ the solution to the scalar ODE problem
\begin{equation}\label{ode}
\partial_t y(x,t) = -x y(x,t) + f(t), \quad \mbox{ with }  \quad y(x,0)=0, \quad  x\in(0,\infty),\quad t\in(0,T].
\end{equation}
With this representation of the convolution kernel, the gCQ approximation of \eqref{convolution} is defined below.
\begin{definition}
For a given $N \ge 0$ and a sequence of time points
\begin{equation}\label{gentimes}
\Delta:= 0<t_1<\dots < t_N = T, \qquad \mbox{ with }\quad \tau_j := t_j-t_{j-1}, \quad j \ge 1,
\end{equation}
the generalized Convolution Quadrature approximation to \eqref{convolution} based on the implicit Euler method is given by
\begin{equation}\label{gCQ}
c_n  = \int_{0}^{\infty} G(x) y_n(x) \, dx,
\end{equation}
where $y_n(x)$ is the approximation of $y(x, t_n)$ in \eqref{ode} given by the implicit Euler method, this is,
\begin{equation}\label{eulersol}
y_n(x)= \sum_{j=1}^{n} \tau_j  \left(\prod_{\ell=j}^{n} \frac{1}{1+\tau_{\ell} x}\right) f(t_j).
\end{equation}
\end{definition}
In applications it will be convenient to use the operational notation \eqref{operationalconvo}, which is defined by
\begin{equation}\label{gCQshort}
\left[ K(\partial_t^{\Delta})f \right]_n = c_n  = \sum_{j=1}^{n} \omega_{n,j} f_j,
\end{equation}
with $f_j=f(t_j)$, if $f$ is a function. We will use the same notation for $f$ being a vector, and in this case, $f_j$ denotes its $j$-th component. The {\em gCQ weights} are given by
\begin{equation}\label{gcqw}
\omega_{n,j}:= \tau_j  \int_{0}^{\infty} G(x) \prod_{\ell=j}^{n} r(\tau_\ell x)\, dx, \qquad r(x):=\frac{1}{1+x}.
\end{equation}
Notice that $r(x)=R(-x)$, with $R(z)=\frac{1}{1-z}$ the stability function of the implicit Euler method.

\begin{remark}\label{remark:equivalence_gCQ}
Under assumptions  \eqref{realiLT}-\eqref{boundG}, our definition of the gCQ approximation \eqref{gCQ} to \eqref{convolution} is equivalent to the one in \cite{LoSau13}, where the gCQ weights are shown to be given by
\begin{equation}\label{originalgCQ}
\omega_{n,j}:= \left(\prod_{\ell=j+1}^{n} \left(-\tau_{\ell}\right)^{-1}  \right) K \left[\tau_j^{-1},\dots, \tau_n^{-1} \right],
\end{equation}
with $K \left[\tau_j^{-1},\dots, \tau_n^{-1} \right]$ Newton's divided differences. Thus the properties proven in \cite{LoSau13} for this scheme hold also under the equivalent definition \eqref{gcqw}. In particular, the gCQ method has the very interesting property of preserving the composition rule. More precisely, by using Lubich's operational notation for one-sided convolutions \cite{Lu94}
\begin{equation}\label{operationalconvo}
\left(K(\partial_t)f\right) (t):= \int_{0}^{t} k(t-s) f(s) \, ds, \qquad \mbox{ with }\quad K=\mathcal{L}[k],
\end{equation}
it holds
\begin{equation}\label{compoconvo}
K_2(\partial_t)K_1(\partial_t)f = (K_2 \cdot K_1)(\partial_t)f.
\end{equation}
Following the notation in \cite{LoSau16}, if we denote by $K(\partial^{\Delta}_t)f$ the vector of approximations to $\left(K(\partial_t)f\right) (t_j)$ on a general time grid
\begin{equation}\label{grid}
\Delta := 0 < t_1< \dots < t_N=T,
\end{equation}
by the gCQ method, it is proven in \cite[Section 5.2]{LoSau16} that
\begin{equation}\label{compogCQ}
K_2(\partial^{\Delta}_t)K_1(\partial^{\Delta}_t)f = (K_2 \cdot K_1)(\partial^{\Delta}_t)f.
\end{equation}
This property is essential in many error proofs of CQ based schemes for different problems involving memory terms of convolution type and will be used in Section 4 when discussing the application of the gCQ to subdiffussion equations.

In  \cite{LoSau13} a complex contour integral representation of the convolution kernel is used. Here we use the real integral representation of the kernel \eqref{realiLT}, which can actually be derived from deformation of the complex contour, as explained in \cite{BanLo19}. This representation of the gCQ based on the real inversion of the Laplace transform is both useful for the error analysis in the low regularity setting, as we show in Section~\ref{sec:analysis}, and for the implementation of the method, see \cite{BanLo19}, and Sections~\ref{sec:algfi} and \ref{sec:algsubdiff}.
\end{remark}

We recall here the best convergence result available so far for the gCQ based on the implicit Euler method, which is a particular case of the class of Runge--Kutta based gCQ studied in \cite{LoSau16}.

\begin{theorem}[{\cite[Theorem 16]{LoSau16}}]\label{th:convergence_regular} Let $0<\sigma<\tau_{\max}^{-1}$ with
\begin{equation}\label{taumax}
\tau_{\max} := \max_{1\le j\le N} \{ \tau_j \}.
\end{equation}
Assume that $K$  satisfies \eqref{genboundK} with $\mu = -\alpha$, for some $\alpha \in (0,1)$. Then if
$f\in C^{3}\left( 0,T \right)$, with $f^{\left(  \ell\right)  }\left(  0\right)  =0$, for $\ell = 0,1$, there exist constants $C, \tilde c>0$ such that the following error estimate holds
\[
\Vert c(t_n)-c_n \Vert_D \le C \e^{\tilde c\sigma T}
		\left\Vert f\right\Vert_{C^{3}([0,T],B)} \tau_{\max}.
\]
\end{theorem}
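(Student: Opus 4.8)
The plan is to reduce the estimate to an accuracy-plus-stability statement for the implicit Euler method applied to the scalar (or Banach-space-valued) parametrised ODE underlying the gCQ. Since $\|K(z)\|\le M|z|^{-\alpha}$ on $\real z>\sigma$ with $\alpha\in(0,1)$, one may represent $k$ by its Bromwich integral along $\real z=\sigma$ and insert it into \eqref{convolution}; the inner integral $\int_0^{t_n}\e^{z(t_n-s)}f(s)\,ds$ supplies an extra $|z|^{-1}$ factor that makes the resulting representation
\begin{equation*}
c(t_n)=\frac{1}{2\pi\opi}\int_{\real z=\sigma}K(z)\,Y(z,t_n)\,dz,\qquad \partial_t Y=zY+f,\quad Y(z,0)=0,
\end{equation*}
absolutely convergent. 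Rewriting the divided-difference form \eqref{originalgCQ} of the gCQ weights as a Cauchy integral (or, when Assumption~\ref{assumptionK} holds, deforming the real representation \eqref{gcqw} onto the same line) gives the companion identity $c_n=\frac{1}{2\pi\opi}\int_{\real z=\sigma}K(z)\,Y_n(z)\,dz$, where $Y_n$ is the implicit Euler approximation of $Y(z,\cdot)$ on the mesh $\Delta$. Hence $c(t_n)-c_n=\frac{1}{2\pi\opi}\int_{\real z=\sigma}K(z)\,E_n(z)\,dz$ with $E_n(z):=Y(z,t_n)-Y_n(z)$, and everything follows once we bound $E_n(z)$ by a quantity that is integrable against $|z|^{-\alpha}$ on the line $\real z=\sigma$.

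For $E_n$ I would use the textbook defect decomposition. With the one-step map $Y_n=R(\tau_n z)(Y_{n-1}+\tau_n f(t_n))$, $R(w)=(1-w)^{-1}$, the exact solution obeys the same recursion up to a defect $d_n(z)$, so $E_n=\sum_{j=1}^n\big(\prod_{\ell=j+1}^nR(\tau_\ell z)\big)d_j$, and a Taylor expansion about $t_j$ gives $\|d_j(z)\|\le\|R(\tau_j z)\|\,\tfrac12\tau_j^2\max_{[t_{j-1},t_j]}\|\partial_t^2Y(z,\cdot)\|$. The key point is that $\partial_t^2Y$ gains a power of $|z|$: from $Y(z,t)=\int_0^t\e^{z(t-s)}f(s)\,ds$, two integrations by parts using exactly $f(0)=f'(0)=0$ yield $\partial_t^2Y(z,t)=\int_0^t\e^{z(t-s)}f''(s)\,ds$, and one more integration by parts (now using $f\in C^3$) gives, uniformly on $\real z=\sigma$,
\begin{equation*}
\|\partial_t^2Y(z,t)\|\le C\,\e^{\sigma T}\,\frac{\|f\|_{C^3([0,T],B)}}{1+|z|},
\end{equation*}
the crude bound $\|\partial_t^2Y\|\le T\e^{\sigma T}\|f''\|_\infty$ covering small $|z|$. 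This $|z|^{-1}$ decay is precisely what renders the frequency integral convergent.

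For the accumulation of defects I would invoke the (unconditional, on this contour) stability of the implicit Euler method: for $0<\sigma<\tau_{\max}^{-1}$ one has $\|R(\tau_\ell z)\|\le(1-\tau_\ell\sigma)^{-1}$ on $\real z=\sigma$, whence $\prod_{\ell=j}^n\|R(\tau_\ell z)\|\le\exp\big(\tilde c\,\sigma(t_n-t_{j-1})\big)$ with $\tilde c=(1-\tau_{\max}\sigma)^{-1}$ — this is the origin of the factor $\e^{\tilde c\sigma T}$ in the statement. Combining the three ingredients,
\begin{equation*}
\|E_n(z)\|\le C\,\e^{\tilde c\sigma T}\,\frac{\|f\|_{C^3([0,T],B)}}{1+|z|}\sum_{j=1}^n\tau_j\,\big(\tau_j\,\|R(\tau_j z)\|\big)\le C\,\e^{\tilde c\sigma T}\,\frac{\|f\|_{C^3([0,T],B)}}{1+|z|}\,\tau_{\max}\,T,
\end{equation*}
where I split the $\tau_j^2$ of the local error into one factor $\tau_{\max}$ (the order) and one Riemann-sum weight $\tau_j$ with $\sum_j\tau_j\le T$, which is what prevents an $N$-dependent blow-up on a general mesh, and I bounded $\|R\|\le\tilde c$ along the contour. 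Inserting this and $\|K(z)\|\le M|z|^{-\alpha}$ and using $\int_{\real z=\sigma}|z|^{-\alpha}(1+|z|)^{-1}|dz|<\infty$ for $\alpha\in(0,1)$ yields the claimed bound $\|c(t_n)-c_n\|\le C\e^{\tilde c\sigma T}\|f\|_{C^3([0,T],B)}\tau_{\max}$.

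The main obstacle is the second-derivative estimate: one must extract a full $|z|^{-1}$ from $\partial_t^2Y$, and this is possible only because the zero-extension of $f$ to $t<0$ is $C^1$ at the origin (the hypotheses $f(0)=f'(0)=0$) and $C^3$ inside; without them the frequency integral diverges and the first-order rate collapses to the order reduction discussed around \eqref{errorCQ}. A secondary technical nuisance is that, working on the vertical line $\real z=\sigma$ rather than on a Hankel contour, one is forced to take $\sigma<\tau_{\max}^{-1}$ and to accept the exponential prefactor; under the stronger Assumption~\ref{assumptionK} the real representation \eqref{realiLT}--\eqref{boundG} can be used instead, which removes both the $\sigma$ and the exponential at the cost of a stronger hypothesis on $K$.
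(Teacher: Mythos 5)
This theorem is not proven in the present paper at all: it is quoted verbatim from \cite{LoSau16} (Theorem 16), so there is no in-paper proof to compare against. Your reconstruction follows essentially the same route as that reference: represent $c(t_n)$ and $c_n$ through the parametrised ODE $\partial_t Y = zY+f$ on a contour in $\Re z>0$, bound the implicit Euler defect by $\tau_j^2\max\|\partial_t^2Y\|$, gain the $|z|^{-1}$ decay by integrating by parts using $f(0)=f'(0)=0$ and $f\in C^3$, and use the stability bound $|R(\tau_\ell z)|\le(1-\tau_\ell\sigma)^{-1}$ on $\Re z=\sigma$, which produces exactly the $\e^{\tilde c\sigma T}$ factor; this is correct, with only routine details left implicit (justifying the exchange of the Bromwich and time integrals, and the deformation of the divided-difference/Cauchy representation \eqref{originalgCQ} of the weights onto the line $\Re z=\sigma$ using the $|z|^{-\alpha-1}$ decay of the integrand).
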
%

In the next section we show how the regularity requirements for $f$ can be significantly relaxed under assumptions \eqref{realiLT}-\eqref{boundG}. Furthermore, we will also be able to remove the exponentially growing term $\e^{\tilde c\sigma T}$ in the error estimate.

\section{Error analysis in the non smooth case}\label{sec:analysis}

From \eqref{convo_realint} and \eqref{gCQ} we can write the error at time $t_n$ by
\begin{equation}\label{En}
c(t_n)-c_n = \int_{0}^{\infty} G(x) e_n(x) \, dx,
\end{equation}
with
\[e_n(x)=y(x,t_n)-y_n(x).\]
As it is usually done to analyze the error for ODE solvers, we notice that $y(x,t)$ solves the recurrence
\[
\frac{y(x,t_n)-y(x,t_{n-1})}{\tau_n} = -x y(x,t_n)+f(t_n)+d_n(x),
\]
with $d_n$ Euler's remainder, this is,
\begin{equation}\label{remainder}
d_n(x) = \frac{y(x,t_n)-y(x,t_{n-1})}{\tau_n} - y_t(x,t_n), \qquad n\ge 1.
\end{equation}
Then
\begin{equation}\label{en}
e_n(x)=\sum_{j=1}^{n} \tau_j d_j(x) \prod_{\ell=j}^{n} r(\tau_\ell x).
\end{equation}

For the special case of $f(t)= t^{\beta}\vv$, with  $\beta >-1$, and $t$-independent \( \vv \in B\), we can obtain explicit representations of \eqref{en}. For this, we recall the definition of the Mittag-Leffler function, see for instance \cite{Gorenflo2020}.

\begin{definition}[Mittag-Leffler function] For $\alpha>0$, $\beta\in\mathbb{R}$, the two parameter Mittag-Leffler function $E_{\alpha,\beta}(z)$ is defined by
$$E_{\alpha,\beta}(z)=\sum\limits_{l=0}^{\infty}\frac{z^l}{\Gamma(\alpha l+\beta)},\quad z\in\mathbb{C}.$$
\end{definition}

We also recall the definition of the Riemann-Liouville fractional derivative of order $\mu >0$, which is given by
\begin{equation*}
	f^{(\mu)}(t)=\frac{d^{\rho}}{dt^{\rho}} \mathcal{I}^{\rho-\mu}[f](t),
\end{equation*}
with  $\mu>0$, $\rho-1\le\mu<\rho$, $\rho\in \mathbb{N}$, and $\mathcal{I}^{\rho-\mu}$ the fractional integral of order $\rho-\mu$, with
\begin{equation}\label{fracint}
\mathcal{I}^{\nu}[f](t)=\frac{1}{\Gamma(\nu)}\int_0^t (t-s)^{\nu-1} f(s)\,ds,
\end{equation}
if $\nu>0$. For $\mu < 0$ we take $f^{(\mu)}=\mathcal{I}^{-\mu}$. Notice that
\[
\mathcal{I}^{\nu}[f] = \partial_t^{-\nu }f,
\]
with the operational notation \eqref{operationalconvo}.

In the next result we collect a few identities and properties that will be needed in the proof of Theorem~\ref{thm:errgCQ}.
\begin{lemma}\label{lema:identities}
It holds:
\begin{enumerate}
\item  If $f(t)=t^{\ell}$, with $\ell>-1$,
\begin{equation}\label{fracint_tell}
\mathcal{I}^{\alpha}[f](t) = \frac{\Gamma(\ell+1)}{\Gamma(\alpha + \ell+1)}t^{\alpha+\ell}.
\end{equation}
\item For $\beta > -1$, $\beta \ne 0$, the solution $y(x,t)$ to \eqref{ode} with $f(t)=t^{\beta} \vv$ satisfies
\begin{eqnarray}
y(x,t) &=&  \Gamma(1+\beta) t^{\beta+1}E_{1,\beta+2}(-xt) \vv,\label{solode_ml} \\[.5em]
y_t(x,t) &=&  \Gamma(1+\beta) t^{\beta}E_{1,\beta+1}(-xt)  \vv, \label{soldode_ml}\\[.5em]
y_{tt}(x,t) &=& \Gamma(1+\beta) t^{\beta-1} E_{1,\beta}(-xt) \vv.\label{solddode_ml}
%\Gamma(1+\beta) t^{\beta-1} \left(\beta E_{1,\beta+1}(-xt) -(xt)E'_{1,\beta+1}(-xt)\right).
\end{eqnarray}
For $\beta=0$ \eqref{solode_ml} and \eqref{soldode_ml} still hold true, but \eqref{solddode_ml} must be replaced by
\begin{equation}\label{solddode_exp}
y_{tt}(x,t) = -x\e^{-xt}\vv.
\end{equation}

\item From \cite[Theorem 1.6]{Podlubny}, for $\alpha<2$ and $\beta \in \bR$, there exists $C>0$ such that
\begin{equation}\label{bound-absml-neg}
|E_{\alpha,\beta}(-y)| \le \frac{C}{1+y}, \qquad y\ge 0.
\end{equation}
\end{enumerate}
\end{lemma}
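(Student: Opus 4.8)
The plan is to establish the three items of Lemma~\ref{lema:identities} essentially independently, since they are of rather different nature. Item~1 is the classical power rule for the Riemann--Liouville fractional integral: substituting $f(s)=s^{\ell}$ into \eqref{fracint} gives $\mathcal{I}^{\alpha}[f](t)=\frac{1}{\Gamma(\alpha)}\int_0^t (t-s)^{\alpha-1}s^{\ell}\,ds$, and the change of variables $s=t\sigma$ turns the integral into $t^{\alpha+\ell}$ times the Beta integral $B(\alpha,\ell+1)=\frac{\Gamma(\alpha)\Gamma(\ell+1)}{\Gamma(\alpha+\ell+1)}$, which yields \eqref{fracint_tell} after cancelling $\Gamma(\alpha)$. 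This requires only $\ell>-1$ and $\alpha>0$ for convergence at the endpoints.

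For item~2 the cleanest route is to solve the linear ODE \eqref{ode} explicitly by the variation-of-constants formula: with $f(t)=t^{\beta}$ and $y(x,0)=0$ one has $y(x,t)=\int_0^t \e^{-x(t-s)}s^{\beta}\,ds$. Then I would expand $\e^{-x(t-s)}$... actually it is simpler to expand $\e^{-xs}$ is not the point; instead I expand $\e^{-x(t-s)}=\sum_{l\ge 0}\frac{(-x)^l(t-s)^l}{l!}$ and integrate term by term, using item~1 (or directly the Beta integral) to get $\int_0^t (t-s)^l s^{\beta}\,ds=\frac{l!\,\Gamma(\beta+1)}{\Gamma(l+\beta+2)}t^{l+\beta+1}$; summing gives $y(x,t)=\Gamma(1+\beta)\,t^{\beta+1}\sum_{l\ge 0}\frac{(-xt)^l}{\Gamma(l+\beta+2)}=\Gamma(1+\beta)t^{\beta+1}E_{1,\beta+2}(-xt)$, which is \eqref{solode_ml}. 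The formulas \eqref{soldode_ml} and \eqref{solddode_ml} then follow by differentiating in $t$, using the termwise-differentiation identity $\frac{d}{dt}\bigl(t^{\mu-1}E_{1,\mu}(-xt)\bigr)=t^{\mu-2}E_{1,\mu-1}(-xt)$ for $\mu>1$ (checked directly from the series); the case $\beta=0$ for the second derivative is special because then $y_t(x,t)=\Gamma(1)E_{1,1}(-xt)=\e^{-xt}$ and $y_{tt}(x,t)=-x\e^{-xt}$, giving \eqref{solddode_exp} — here the formal series for $E_{1,0}$ has a vanishing leading $1/\Gamma(0)$ term so the general pattern degenerates. One could alternatively verify \eqref{solode_ml}--\eqref{solddode_ml} by plugging them into the ODE and matching the known recurrence $\alpha z E_{\alpha,\beta+\alpha}(z)=E_{\alpha,\beta}(z)-\frac{1}{\Gamma(\beta)}$, but the direct series computation is less error-prone.

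Item~3 is simply the statement of \cite[Theorem~1.6]{Podlubny} specialized to a negative real argument $z=-y$, $y\ge 0$: for $0<\alpha<2$ the Mittag-Leffler function $E_{\alpha,\beta}(z)$ has at most algebraic decay $|z|^{-1}$ as $|z|\to\infty$ in the sector containing the negative real axis, while it is bounded near the origin, so the two regimes combine into the single uniform bound $|E_{\alpha,\beta}(-y)|\le C/(1+y)$; no independent proof is needed beyond citing the reference and noting continuity on $[0,\infty)$ to handle the compact part.

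I expect no genuine obstacle here, as all three items are standard; the only point that needs a little care is the bookkeeping of Gamma-function shifts in the termwise differentiation for \eqref{soldode_ml}--\eqref{solddode_ml} and, relatedly, isolating cleanly why $\beta=0$ forces the exponential form \eqref{solddode_exp} rather than a $t^{-1}E_{1,0}(-xt)$ expression. I would present item~2 as the main computation and state items~1 and~3 briefly with the appropriate references.
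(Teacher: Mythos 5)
Your proposal is correct and follows essentially the same route as the paper: expand the exponential in the variation-of-constants integral and integrate term by term to get \eqref{solode_ml}, differentiate the series termwise for \eqref{soldode_ml}--\eqref{solddode_ml}, treat $\beta=0$ via the explicit exponential solution, and simply cite Podlubny for \eqref{bound-absml-neg} (the paper proves item 1 by a one-line Laplace-transform observation rather than your Beta-integral computation, an immaterial difference). One small bookkeeping note: your differentiation identity stated for $\mu>1$ is also needed with $\mu=\beta+1\in(0,1)$ when $-1<\beta<0$, where it still holds since $\Gamma(\mu-1)$ is finite and nonzero there, so nothing breaks.
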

\begin{proof}
Identity \eqref{fracint_tell} follows directly by noticing that the Laplace transform of the left hand side is $\Gamma(\ell+1) z^{-\ell-\alpha-1}$.

In order to prove \eqref{solode_ml}, we write
\[
y(x,t) =\int_{0}^{t} (t-s)^{\beta} \e^{-xs}\, ds \vv= \Gamma(1+\beta)\mathcal{I}^{1+\beta}[\e^{-x \cdot}](t)\vv.
\]
By expanding the exponential into its power series and using the same argument as to prove \eqref{fracint_tell}, we can write
\begin{align*}
y(x,t) &=\sum_{\ell=0}^{\infty} \frac{(-x)^{\ell}}{\ell!} \int_{0}^{t} (t-s)^{\beta} s^{\ell}\, ds \vv\\
&= \Gamma(1+\beta) \sum_{\ell=0}^{\infty} \frac{(-x)^{\ell}}{\ell!} \frac{\Gamma(\ell+1)}{\Gamma( \ell+\beta +2)}t^{\beta+\ell+1}\vv\\
&=\Gamma(1+\beta) t^{\beta+1}E_{1,\beta+2}(-xt)\vv,
\end{align*}
this is \eqref{solode_ml}.

Derivation of the power series gives
\[
y_t(x,t) =\Gamma(1+\beta) \sum_{\ell=0}^{\infty} \frac{(-x)^{\ell}} {\Gamma( \ell+\beta +2)}(\beta+\ell+1)t^{\beta+\ell} \vv  =\Gamma(1+\beta) \sum_{\ell=0}^{\infty} \frac{(-x)^{\ell}} {\Gamma( \ell+\beta +1 )}t^{\beta+\ell}\vv
\]
and so \eqref{soldode_ml}. The expression \eqref{solddode_ml} for $\beta \ne 0$ follows  analogously. For $\beta=0$ the term in $\ell=0$ is constant and thus we must use instead \eqref{solddode_exp}, which follows by differentiating twice in
\begin{equation}\label{solode_exp}
y(x,t) = \frac{1-\e^{-xt}}{x}\vv.
\end{equation}
\end{proof}

\begin{lemma}\label{lema:dkml}
Let $\beta >-1$, and $f(t)=t^{\beta}\vv$ in \eqref{ode}. Then
\begin{equation}\label{d1}
d_1(x) = -\tau_1^{\beta} \Gamma(1+\beta) \left(E_{1,\beta+1}(-x\tau_1)-E_{1,\beta+2}(-x\tau_1)  \right)\vv
\end{equation}
and, for $j>1$, there exist $\xi_j \in(t_{j-1},t_j)$ such that
\begin{equation}\label{dk-ml}
d_j(x) = \left\{ \begin{array}{ll}
\displaystyle
-\frac{\tau_j}{2} \Gamma(1+\beta) \xi_j^{\beta-1} E_{1,\beta}(-x\xi_j)\vv,& \mbox{ if }\beta \ne 0, \\[1em]
\displaystyle
-\frac{x\tau_j}{2}\e^{-x\xi_j}\vv , & \mbox{ if }   \beta = 0.
\end{array}
  \right.
\end{equation}

\end{lemma}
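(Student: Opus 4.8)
The plan is to compute Euler's remainder $d_j(x)$ directly from its definition \eqref{remainder}, namely $d_j(x) = \tau_j^{-1}(y(x,t_j)-y(x,t_{j-1})) - y_t(x,t_j)$, by treating the first step $j=1$ separately from the generic step $j>1$, since $y(x,t_0)=y(x,0)=0$ makes the difference quotient degenerate in a different way.

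For $j=1$: since $y(x,0)=0$, we have $d_1(x) = \tau_1^{-1} y(x,\tau_1) - y_t(x,\tau_1)$. Substituting the Mittag-Leffler expressions \eqref{solode_ml} and \eqref{soldode_ml} from Lemma~\ref{lema:identities} gives
\[
d_1(x) = \tau_1^{-1}\Gamma(1+\beta)\tau_1^{\beta+1}E_{1,\beta+2}(-x\tau_1) - \Gamma(1+\beta)\tau_1^{\beta}E_{1,\beta+1}(-x\tau_1),
\]
and factoring out $\tau_1^{\beta}\Gamma(1+\beta)$ yields \eqref{d1} immediately. One should check that this also covers $\beta=0$, which it does since \eqref{solode_ml}--\eqref{soldode_ml} remain valid there.

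For $j>1$: here I would invoke the standard Taylor-with-integral-or-Lagrange-remainder argument for the implicit Euler local truncation error. Writing $y(x,t_{j-1}) = y(x,t_j) - \tau_j y_t(x,t_j) + \frac{\tau_j^2}{2}y_{tt}(x,\xi_j)$ for some $\xi_j\in(t_{j-1},t_j)$ (Taylor's theorem with Lagrange remainder, legitimate because $t\mapsto y(x,t)$ is $C^2$ on $(0,T]$ for fixed $x>0$ — note $\beta-1$ could be negative but $t_{j-1}>0$ so there is no singularity on this interval), and dividing by $\tau_j$, the first-order terms cancel and we get $d_j(x) = -\frac{\tau_j}{2}y_{tt}(x,\xi_j)$. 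Then substitute \eqref{solddode_ml} when $\beta\neq 0$ and \eqref{solddode_exp} when $\beta=0$ to obtain exactly \eqref{dk-ml}.

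The only genuine subtlety — and the point I would be most careful about — is the regularity justification for applying the Lagrange form of the remainder on $(t_{j-1},t_j)$ when $\beta\in(-1,0)$, since then $y_{tt}(x,t)$ blows up like $t^{\beta-1}$ as $t\to 0^+$; this is fine for $j>1$ because the interval stays bounded away from the origin, but it is precisely why $j=1$ must be handled by the exact Mittag-Leffler identity rather than a Taylor expansion. Beyond that, the proof is a short direct computation and I do not anticipate any real obstacle; I would simply present the two cases, cite Lemma~\ref{lema:identities} for the closed forms of $y$, $y_t$, $y_{tt}$, and state Taylor's theorem for the generic step.
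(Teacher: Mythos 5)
Your proof is correct and follows essentially the same route as the paper: the $j=1$ case by direct substitution of \eqref{solode_ml}--\eqref{soldode_ml}, and the $j>1$ case via Taylor's theorem with Lagrange remainder giving $d_j(x)=-\tfrac{\tau_j}{2}y_{tt}(x,\xi_j)$, followed by \eqref{solddode_ml} or \eqref{solddode_exp}; your remark on why $\beta\in(-1,0)$ causes no trouble for $j>1$ (the interval stays away from $t=0$) is exactly the justification the paper leaves implicit. One minor caveat: substituting \eqref{solddode_exp} actually yields $+\tfrac{x\tau_j}{2}\e^{-x\xi_j}$ rather than the sign displayed in \eqref{dk-ml} for $\beta=0$, which is a typo in the statement itself (both your argument and the paper's produce the plus sign, and only $|d_j|$ is used downstream), so do not claim to recover the stated sign literally.
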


\begin{proof} For $j=1$ the result follows from \eqref{solode_ml} and \eqref{soldode_ml}, since
\[
d_1(x) = \frac{1}{\tau_1} \left( y(x,\tau_1) - y_t(x,t_1) \right). %= \Gamma(1+\beta)\tau^{\beta-1}_1 \left( E_{1,\beta+1}(-x \tau_1) - E_{1,\beta}(-x \tau_1) \right)
\]
For $j\ge 2$, we apply \eqref{solddode_ml} or \eqref{solddode_exp}, according to the value of $\beta$, to
\[
d_j(x) =\frac{1}{\tau_j}\left( y(x,t_j)-y(x,t_{j-1}) \right)- y_t(x,t_j) = -\frac{\tau_j}{2} y_{tt}(x,\xi_j).
%-\frac{\tau_j}{2} \Gamma(1+\beta) \xi_j^{\beta-1} E_{1,\beta-1}(-x\xi_j)
\]

\end{proof}

The main result is given below.
\begin{proposition}\label{prop:errgCQ_powt}
 Assume that $K$ satisfies Assumption~\ref{assumptionK}. For $f(t)=t^{\beta} \vv$ with $\beta>-1$,   $\vv\in B$ independent of $t$, and for an arbitrary sequence of time points $0<t_1<\dots<t_n$  with step sizes $\tau_j=t_j-t_{j-1}$, $j\ge1$,  there exist $\xi_j \in (t_{j-1},t_j)$, for $j\ge 2$, such that the error in \eqref{En} can be bounded by
\begin{equation}\label{bgen_en}
\Vert c(t_n)-c_n \Vert_D\le C \Vert \vv\Vert_B \left(\tau_1^{\alpha+\beta}+\sum_{j=2}^{n} \tau_j^{2} \xi^{\alpha+\beta-2}_j\right),
\end{equation}
with  $C$ depending on  $\alpha$, $\beta$ and the constant in \eqref{boundG}.

In particular, for $t_n$ as in \eqref{gmesh}, $1\le n \le N$, it holds
\begin{equation}
 \Vert c(t_n)-c_n \Vert_D \leq C \Vert \vv\Vert_B T^{\alpha+\beta} \left\{
\begin{array}{ll}
N^{-\gamma(\alpha+\beta)} , & \quad \gamma(\alpha+\beta)<1, \\
 N^{-1}(1+\log(n)) , & \quad \gamma(\alpha+\beta)=1, \\
 N^{-1} t_n^{\alpha+\beta-1/\gamma}, & \quad \gamma(\alpha+\beta)>1.
\end{array}
\right.
\end{equation}
\end{proposition}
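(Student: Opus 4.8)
The starting point is the error representation \eqref{En} together with \eqref{en}, which expresses $e_n(x)$ as a sum over $j$ of terms $\tau_j d_j(x)\prod_{\ell=j}^n r(\tau_\ell x)$. Plugging into \eqref{En} and using the bound \eqref{boundG} on $G$, the error is controlled by
\begin{equation*}
|c(t_n)-c_n| \le \frac{M}{\pi}\sum_{j=1}^{n}\tau_j \int_0^\infty x^{-\alpha}\,|d_j(x)|\prod_{\ell=j}^{n} r(\tau_\ell x)\,dx.
\end{equation*}
For the defect terms I would substitute the explicit formulas from Lemma~\ref{lema:dkml}: for $j=1$ one uses \eqref{d1}, and for $j\ge 2$ one uses \eqref{dk-ml}, in both cases then invoking the uniform Mittag-Leffler bound \eqref{bound-absml-neg} (valid since $\alpha=1<2$ there, with the relevant second parameter) or, when $\beta=0$, the elementary bound $x\e^{-x\xi_j}\le C\xi_j^{-1}/(1+x\xi_j)$ to get in every case a bound of the form $|d_j(x)| \le C\,\xi_j^{\beta-1}/(1+x\xi_j)$ for $j\ge2$ (with $\xi_1$ replaced by $\tau_1$ and exponent $\beta$ instead of $\beta-1$ for $j=1$, handled separately).

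The core of the argument is then to estimate, for each $j$, the scalar integral
\begin{equation*}
I_j := \tau_j \int_0^\infty \frac{x^{-\alpha}}{1+x\xi_j}\prod_{\ell=j}^n r(\tau_\ell x)\,dx.
\end{equation*}
Here I would exploit two features: first $r(\tau_\ell x)=1/(1+\tau_\ell x)\le 1$, so the finite product only helps and may be dropped for an upper bound (or, for the boundary term $j=1$, one may keep just the single factor $r(\tau_1 x)\le 1/(1+\tau_1 x)$); second, since $\tau_j\le\xi_j$ and $\tau_j/(1+x\xi_j)\le \min(\tau_j,1/x)$, one reduces $I_j$ to (a constant times) $\int_0^\infty x^{-\alpha}\min(\tau_j,1/x)\,dx$ up to extra factors. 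The key elementary integral is $\int_0^\infty x^{-\alpha}\,\frac{1}{(1+x a)(1+x b)}\,dx \le C\, a^{\alpha-1}$ for $0<a\le b$ and $\alpha\in(0,1)$, which is a standard split of the range at $x=1/a$. Applying this with $a=\xi_j$ (and $b$ one of the $\tau_\ell\le\xi_j$ — note $r(\tau_j x)$ with $\tau_j\le \xi_j$ works) yields $I_j \le C\,\tau_j\,\xi_j^{\alpha-1}$ for $j\ge2$, and combined with the $\xi_j^{\beta-1}$ from the defect this gives the $j$-th summand $\tau_j^2\xi_j^{\alpha+\beta-2}$ after writing one of the two $\tau_j$'s. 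The $j=1$ term similarly produces $\tau_1\cdot\tau_1^\alpha\cdot\tau_1^{\beta-1+1}=\tau_1^{\alpha+\beta}$; here one must be slightly careful that $\int_0^\infty x^{-\alpha}(1+x\tau_1)^{-1}\,dx = C\tau_1^{\alpha-1}$, with the $x^{\beta}$-type factor from $d_1$ already absorbed into the Mittag-Leffler bound rather than producing an extra power. This establishes \eqref{bgen_en}.

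For the graded-mesh consequence I would insert $t_n=(n\tau)^\gamma$, $\tau=T^{1/\gamma}/N$, so that $\tau_j=t_j-t_{j-1}\le C\gamma\tau (j\tau)^{\gamma-1}=C\gamma\tau^\gamma j^{\gamma-1}$ by the mean value theorem, and $\xi_j\in(t_{j-1},t_j)$ satisfies $\xi_j\asymp (j\tau)^\gamma = \tau^\gamma j^\gamma$ for $j\ge2$; also $\tau_1^{\alpha+\beta}=\tau^{\gamma(\alpha+\beta)}=T^{\alpha+\beta}N^{-\gamma(\alpha+\beta)}$, which is dominated by the main term in all three regimes. Then
\begin{equation*}
\sum_{j=2}^n \tau_j^2\xi_j^{\alpha+\beta-2} \le C\,\tau^{2\gamma}\tau^{\gamma(\alpha+\beta-2)}\sum_{j=2}^n j^{2(\gamma-1)}j^{\gamma(\alpha+\beta-2)} = C\,\tau^{\gamma(\alpha+\beta)}\sum_{j=2}^n j^{\gamma(\alpha+\beta)-2}.
\end{equation*}
The exponent of $j$ is $\gamma(\alpha+\beta)-2 < -1$ exactly when $\gamma(\alpha+\beta)<1$, in which case the sum converges and one gets $C\tau^{\gamma(\alpha+\beta)}=CT^{\alpha+\beta}N^{-\gamma(\alpha+\beta)}$; when $\gamma(\alpha+\beta)=1$ the exponent is $-1$ and the partial sum is $\le C(1+\log n)$, giving $CT^{\alpha+\beta}N^{-1}(1+\log n)$; when $\gamma(\alpha+\beta)>1$ the sum is $\le C\,n^{\gamma(\alpha+\beta)-1}$, so the bound becomes $C\tau^{\gamma(\alpha+\beta)}n^{\gamma(\alpha+\beta)-1} = CT^{\alpha+\beta}N^{-\gamma(\alpha+\beta)}\cdot(t_n^{1/\gamma}/\tau)^{\gamma(\alpha+\beta)-1}$, which after simplification using $\tau=T^{1/\gamma}/N$ is $CT^{\alpha+\beta}N^{-1}t_n^{\alpha+\beta-1/\gamma}$, matching the claim. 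The main obstacle I anticipate is not any single estimate but bookkeeping: getting the elementary integral $\int_0^\infty x^{-\alpha}/((1+xa)(1+xb))\,dx$ and its relatives with the right power of $a$ uniformly in $b\ge a$, keeping track of which $\tau_\ell$ factors in the product one actually needs (one suffices), and handling the borderline cases $\beta=0$ and $j=1$ separately so that the Mittag-Leffler bound \eqref{bound-absml-neg} applies with a legitimate choice of second parameter.
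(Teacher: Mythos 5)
Your proposal follows essentially the same route as the paper's proof: bound the product $\prod_{\ell} r(\tau_\ell x)$ by one, insert the defect formulas of Lemma~\ref{lema:dkml} with the Mittag--Leffler bound \eqref{bound-absml-neg} (or the elementary exponential bound when $\beta=0$), evaluate the resulting weakly singular integral $\int_0^\infty x^{-\alpha}(1+x\xi_j)^{-1}dx = C\xi_j^{\alpha-1}$, and then sum over the graded mesh exactly as in the paper (the two-factor integral you invoke is unnecessary, since dropping all product factors already suffices). The only blemish is the displayed exponent bookkeeping for $j=1$, where "$\tau_1\cdot\tau_1^{\alpha}\cdot\tau_1^{\beta-1+1}$" as written gives $\tau_1^{1+\alpha+\beta}$; your subsequent sentence shows the intended and correct count $\tau_1\cdot\tau_1^{\beta}\cdot\tau_1^{\alpha-1}=\tau_1^{\alpha+\beta}$, so the argument stands.
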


\begin{proof}
We write
\begin{align*}
c(t_n)-c_n &= \int_0^{\infty} G(x) e_n(x) \, dx = \sum_{j=1}^{n} \tau_j   \int_0^{\infty} G(x)  \left(\prod_{\ell=j}^{n} r(\tau_\ell x) \right) d_j(x) \,dx.
\end{align*}
By \eqref{boundG} and noticing that $0< r(\tau_\ell x) \le 1$, we can bound
\[
\Vert  c(t_n)-c_n\Vert_D \le  \frac{M}{\pi} \sum_{j=1}^{n} \tau_j  \int_0^{\infty} x^{-\alpha}   \Vert d_j(x) \Vert_B  \,dx,
\]
provided that the integral above is convergent.

For $j=1$ we use \eqref{d1} and \eqref{bound-absml-neg} to obtain
\begin{align*}
&\tau_1   \int_0^{\infty} x^{-\alpha}  \Vert d_j(x)  \Vert_B   \,dx \\ & \le \Vert \vv\Vert_B \tau_1^{\beta+1}\Gamma(1+\beta) \int_0^{\infty} x^{-\alpha}\left( \left| (E_{1,\beta+1}(-x\tau_1) \right| +\left| E_{1,\beta+2}(-x\tau_1)\right| \right)  \,dx \\
 & \le 2C\Vert \vv\Vert_B  \tau_1^{\beta+1}\Gamma(1+\beta) \int_0^{\infty} x^{-\alpha}\frac{1}{1+x\tau_1}\,dx\\
 &=2C\Vert \vv\Vert_B  \tau_1^{\alpha + \beta}\Gamma(1+\beta) \int_0^{\infty} u^{-\alpha}\frac{1}{1+u}\,du\\
 &=2C\Vert \vv\Vert_B  \tau_1^{\alpha + \beta}\Gamma(1+\beta) \mathrm{B}(1-\alpha,\alpha),
\end{align*}
with $\mathrm{B}$ the Beta function.

For $j>1$ and $\beta = 0$ we have, by Lemma~\ref{lema:dkml},
\begin{align*}
\int_{0}^{\infty}x^ {-\alpha }  \Vert d_ j( x)  \Vert_B  \, dx & =\Vert \vv\Vert_B  \frac{\tau_j}{2} \int_{0}^{\infty}x^ {1-\alpha} \e^{-x\xi_j}\,dx\\
&= \Vert \vv\Vert_B \frac{\tau_j}{2} \xi_j^{\alpha-2} \int_{0}^{\infty}u^ {1-\alpha} \e^{-u}\,du\\
&\le \Vert \vv\Vert_B \Gamma(2-\alpha)\frac{\tau_j}{2} t_{j-1}^{\alpha-2},
\end{align*}
for $\xi_j\in(t_{j-1},t_j)$.

For $j>1$ and $\beta\ne 0$, we use \eqref{dk-ml} and \eqref{bound-absml-neg} to bound
\begin{align*}
\int_{0}^{\infty}x^ {-\alpha }   \Vert d_ j( x)  \Vert_B  \, dx &=\Vert \vv\Vert_B 
\frac{\Gamma(1+\beta)}{2} \tau_j  \xi_j^{\beta-1} \int_{0}^{\infty}x^ {-\alpha }\left| E_{1,\beta}(-x\xi_j)\right| \,dx\\
&\le
C\Vert \vv\Vert_B \frac{\Gamma(1+\beta)}{2} \tau_j  \xi_j^{\beta-1} \int_{0}^{\infty}x^ {-\alpha }\frac{1}{1+x\xi_j} \,dx\\
&=
C\Vert \vv\Vert_B \frac{\Gamma(1+\beta)}{2} \tau_j  \xi_j^{\alpha+\beta-2} \int_{0}^{\infty}u^ {-\alpha }\frac{1}{1+u} \,du, \quad  \xi_j\in(t_{j-1},t_j).
\end{align*}
This proves the general estimate \eqref{bgen_en}.

We now notice that for the graded mesh in \eqref{gmesh} we have
\begin{equation}\label{propgmesh}
\tau_j  \le \gamma \tau^{\gamma} j^{\gamma-1}, \quad j\ge 1,
\end{equation}
and that for $\beta < 2-\alpha$ we can bound $\xi_j^{\alpha+\beta-2} \le t_{j-1}^{\alpha+\beta-2}$, for $j\ge 2$. Thus
\begin{align}
\nonumber
&	\sum\limits_{j=2}^{n}\tau_j\int_{0}^{\infty}x^ {-\alpha }  \Vert d_ j( x) \Vert_B   dx \\
&\le C\Vert \vv\Vert_B   \gamma^2 \tau^{\gamma(\alpha+\beta)}  \sum\limits_{j=2}^{n} j^{2(\gamma-1)}  (j-1)^{\gamma(\alpha+\beta-2)}.
\end{align}
We bound
\[
j^{2(\gamma-1)}  (j-1)^{-2\gamma} =\left( 1+\frac{1}{j-1} \right)^{2\gamma} j^{-2} \le 4^{\gamma}j^{-2}
\]
and then
\begin{align}
\nonumber
&	\sum\limits_{j=2}^{n}\tau_j\int_{0}^{\infty}x^ {-\alpha }   \Vert d_ j( x)  \Vert_B  dx \le C\Vert \vv\Vert_B  \gamma^2 4^{\gamma} \tau^{\gamma(\alpha+\beta)}  \sum\limits_{j=2}^{n}  j^{\gamma(\alpha+\beta)-2},
\end{align}
and the result follows. For $\beta \ge 2-\alpha$ we use the bound $\xi_j^{\alpha+\beta-2} \le t_{j}^{\alpha+\beta-2}$ and derive the same result without the constant $4^{\gamma}$.
\end{proof}
\begin{remark}
	Note that for $\alpha+\beta>1$, the error estimate \eqref{bgen_en} on the uniform mesh with step size $\tau$ can be written as
\begin{align*}
 \Vert c(t_n)-c_n\Vert_D &\le C\Vert \vv\Vert_B \left(\tau^{\alpha+\beta}+\sum_{j=2}^{n} \tau^{2} \xi^{\alpha+\beta-2}_j\right)\\[.5em]
&\le C\Vert \vv\Vert_B \left(\tau^{\alpha+\beta}+\sum_{j=2}^{n} \tau^{2} \max\left(t^{\alpha+\beta-2}_j,t^{\alpha+\beta-2}_{j-1}\right)\right)\\[.5em]
&\le \frac{C\Vert \vv\Vert_B }{\alpha+\beta-1}\tau t_n^{\alpha+\beta-1},
\end{align*}
which is in agreement with the result in \cite[Theorem	5.2]{Lu88I}. Moreover, for  $\alpha+\beta=1$,  the error estimate  \eqref{bgen_en} on the uniform mesh can be bounded by
\begin{align*}
	 \Vert c(t_n)-c_n\Vert_D &\le C\Vert \vv\Vert_B \tau\left(1+\log(n)\right),\quad n\ge 1.
\end{align*}
\end{remark}
We will need the following generalization of the previous result to deal with remainder terms in power series expansions of the right hand side.
\begin{corollary}\label{coro:peano}
 Assume that $K$ satisfies Assumption~\ref{assumptionK}. Then for fixed $\sigma\ge 0$ and $f(t)=(t-\sigma)_+^{\beta} \vv$, where $\beta > 1$,   $(y)_+:=\max(0,y)$,  $\vv\in B$ is $t$-independent,  and for an arbitrary sequence of time points $0<t_1<\dots<t_n$ with step sizes $\tau_n=t_n-t_{n-1}$, $n\ge1$,  there are $\xi_j \in (t_{j-1},t_j)$, for $j\ge 2$, such that the error in \eqref{En} can be bounded by
\begin{equation}\label{bgen_en_peano}
 \Vert c(t_n)-c_n\Vert_D \leq C\Vert \vv\Vert_B  \left(\tau_1(\tau_1-\sigma)_+^{\alpha+\beta-1}	+\sum\limits_{j=2}^n\tau_j^2 (\xi_j-\sigma)_+^{\alpha+\beta-2}\right),
\end{equation}
with  $C$ depending on $\alpha$, $\beta$ and the constant in \eqref{boundG}.

\end{corollary}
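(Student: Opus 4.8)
The plan is to follow the proof of Proposition~\ref{prop:errgCQ_powt}, the only structural novelty being that the data is a shifted pure power. First I would record that, since $f\equiv 0$ on $[0,\sigma]$, the solution of \eqref{ode} vanishes there, while for $t>\sigma$ the map $\tau\mapsto y(x,\tau+\sigma)$ solves \eqref{ode} with data $\tau^\beta$ and zero initial value; hence $y(x,t)=\widetilde y(x,(t-\sigma)_+)$, with $\widetilde y$ the solution associated with $t^\beta$ in Lemma~\ref{lema:identities}. By \eqref{solddode_ml} this gives
\[
y_{tt}(x,t)=\Gamma(1+\beta)\,(t-\sigma)_+^{\beta-1}\,E_{1,\beta}\!\bigl(-x(t-\sigma)_+\bigr),\qquad t\neq\sigma ,
\]
and, because $\beta>1$, the right-hand side extends continuously by $0$ at $t=\sigma$, so $y(x,\cdot)\in C^2([0,T])$ for every $x>0$. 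This is the point where the hypothesis $\beta>1$ is genuinely used. I would also note the elementary inequalities $\alpha+\beta-1>0$ and $\alpha+\beta-2>-1$.

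Next, as in \eqref{En}--\eqref{en}, the error is $c(t_n)-c_n=\sum_{j=1}^n\tau_j\int_0^\infty G(x)\bigl(\prod_{\ell=j}^n r(\tau_\ell x)\bigr)d_j(x)\,dx$, and \eqref{boundG} together with $0<r(\tau_\ell x)\le 1$ reduces matters to estimating $\sum_{j=1}^n\tau_j\int_0^\infty x^{-\alpha}|d_j(x)|\,dx$. Since $y(x,\cdot)\in C^2$, I would use the Peano (integral) form of Euler's remainder, valid for all $j\ge 1$ with $t_0:=0$,
\[
d_j(x)=-\frac{1}{\tau_j}\int_{t_{j-1}}^{t_j}(s-t_{j-1})\,y_{tt}(x,s)\,ds ,
\]
substitute the formula for $y_{tt}$, bound $|E_{1,\beta}(-x(s-\sigma)_+)|\le C/(1+x(s-\sigma)_+)$ via \eqref{bound-absml-neg}, and for $s>\sigma$ perform the scaling $u=x(s-\sigma)$ (convergent since $\alpha\in(0,1)$) to get $\int_0^\infty x^{-\alpha}|y_{tt}(x,s)|\,dx\le C_1\,(s-\sigma)_+^{\alpha+\beta-2}$ with $C_1=C\,\Gamma(1+\beta)\,B(1-\alpha,\alpha)$. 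Using Tonelli, $s-t_{j-1}\le\tau_j$, and the primitive of $(s-\sigma)_+^{\alpha+\beta-2}$,
\[
\int_0^\infty x^{-\alpha}|d_j(x)|\,dx\le C_1\int_{t_{j-1}}^{t_j}(s-\sigma)_+^{\alpha+\beta-2}\,ds=\frac{C_1}{\alpha+\beta-1}\Bigl[(t_j-\sigma)_+^{\alpha+\beta-1}-(t_{j-1}-\sigma)_+^{\alpha+\beta-1}\Bigr].
\]

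It then remains to convert these increments into the form demanded by \eqref{bgen_en_peano}. For $j=1$, since $t_0=0$, the bound is already $\le\frac{C_1}{\alpha+\beta-1}(\tau_1-\sigma)_+^{\alpha+\beta-1}$, which multiplied by $\tau_1$ is the first term of \eqref{bgen_en_peano}. For $j\ge 2$, I would apply the mean value theorem to $\phi(t):=(t-\sigma)_+^{\alpha+\beta-1}$ on $[t_{j-1},t_j]$: if $\sigma\le t_{j-1}$ or $\alpha+\beta\ge 2$, then $\phi\in C^1$ there and one gets $\xi_j\in(t_{j-1},t_j)$ with $\phi(t_j)-\phi(t_{j-1})=(\alpha+\beta-1)\tau_j(\xi_j-\sigma)_+^{\alpha+\beta-2}$; if $t_j\le\sigma$ the term vanishes because $d_j\equiv 0$; and in the remaining case $t_{j-1}<\sigma<t_j$ with $\alpha+\beta<2$ the quantity $(\xi_j-\sigma)_+^{\alpha+\beta-2}$ blows up as $\xi_j\downarrow\sigma$, so some $\xi_j\in(\sigma,t_j)\subset(t_{j-1},t_j)$ still makes $C\tau_j^2(\xi_j-\sigma)_+^{\alpha+\beta-2}$ dominate the (finite) left-hand side. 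In all cases $\tau_j\int_0^\infty x^{-\alpha}|d_j(x)|\,dx\le C\tau_j^2(\xi_j-\sigma)_+^{\alpha+\beta-2}$, and summing over $j$ yields \eqref{bgen_en_peano}. I expect the main obstacle to be the regularity right at $t=\sigma$: legitimising the integral form of the remainder requires $y_{tt}(x,\cdot)$ to be continuous across $\sigma$ (hence the need for $\beta>1$ rather than $\beta>0$), and some care is needed in the subinterval that straddles $\sigma$ when passing from the telescoping bound in $(t_j-\sigma)_+^{\alpha+\beta-1}$ to the pointwise bound in $(\xi_j-\sigma)_+^{\alpha+\beta-2}$.
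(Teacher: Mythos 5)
Your argument is correct and shares the paper's backbone: the shift identity $y(x,t)=\widetilde y(x,(t-\sigma)_+)$, the use of $\beta>1$ to get $y(x,\cdot)\in C^2$ with $y_{tt}(x,t)=\Gamma(1+\beta)(t-\sigma)_+^{\beta-1}E_{1,\beta}(-x(t-\sigma)_+)$, and the bound of $\int_0^\infty x^{-\alpha}|d_j(x)|\,dx$ through \eqref{bound-absml-neg} and the scaling $u=x(s-\sigma)$, exactly as in Proposition~\ref{prop:errgCQ_powt}. The one genuine difference is the representation of the Euler remainder. The paper simply carries over the Lagrange form of Lemma~\ref{lema:dkml}, i.e.\ $d_j(x)=-\frac{\tau_j}{2}\,\Gamma(1+\beta)(\xi_j-\sigma)_+^{\beta-1}E_{1,\beta}(-x(\xi_j-\sigma))$ for $j\ge 2$ and the explicit shifted analogue of \eqref{d1} for $j=1$; this is legitimate even on the interval straddling $\sigma$ precisely because $y_{tt}$ is continuous there (which is where $\beta>1$ enters), so the $\xi_j$ in the statement are just the Taylor mean-value points and no case distinction is needed. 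You instead use the Peano (integral) form of the remainder, arrive at the telescoping bound $\frac{C_1}{\alpha+\beta-1}\bigl[(t_j-\sigma)_+^{\alpha+\beta-1}-(t_{j-1}-\sigma)_+^{\alpha+\beta-1}\bigr]$, and then manufacture $\xi_j$ a posteriori via the mean value theorem, with a separate blow-up selection on the interval containing $\sigma$ when $\alpha+\beta<2$. Both routes are valid; your intermediate telescoping estimate is in fact slightly sharper (it sums exactly, without passing through pointwise values of $\xi_j$), while the paper's is shorter and yields the $\xi_j$ with no case analysis. A cosmetic caveat only: in the borderline case $\alpha+\beta=2$ with $t_{j-1}<\sigma<t_j$ your $\phi(t)=(t-\sigma)_+^{\alpha+\beta-1}$ is not $C^1$ at $\sigma$, but the required inequality reduces to $t_j-\sigma\le\tau_j(\xi_j-\sigma)_+^{0}$ for any $\xi_j\in(\sigma,t_j)$, so nothing breaks.
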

\begin{proof}
We observe that
\[
y(x,t) = \left\{\begin{array}{ll}
0, & \quad  \mbox{ if }\ t < \sigma, \\
u(x,t-\sigma), & \quad  \mbox{ if }\  t \ge \sigma,
\end{array}
\right.
\]
where $u$ is the solution to \eqref{ode} with right hand side $t^{\beta}$. The restriction to $\beta  > 1$ guarantees that $y(x,t)$ in \eqref{ode} is twice differentiable with respect to $t$ in $(0,T]$, with $y_{tt}$ bounded, and obvious formulas follow for $y_t$ and $y_{tt}$. This leads to the following generalization of \eqref{d1}
\begin{align*}\label{peanod1}
	&d_1(x)=\frac{y(x,\tau_1)}{\tau_1}-y_t(x,\tau_1)\\
	&=\Gamma(1+\beta)(\tau_1-\sigma)_+^{\beta}\left(\frac{(\tau_1-\sigma)_+}{\tau_1}E_{1,\beta+2}(-x(\tau_1-\sigma))-E_{1,\beta+1}(-x(\tau_1-\sigma))\right) \vv
\end{align*}
and of \eqref{dk-ml}
\begin{equation*}\label{peanodk}
d_j(x) = -\frac{\tau_j }{2} \Gamma(1+\beta) \left(\xi_j - \sigma\right)_+^{\beta-1} E_{1,\beta}(-x(\xi_j-\sigma)) \vv,\quad 2\le j\le n.
\end{equation*}
The integrals in the proof of Proposition~\ref{prop:errgCQ_powt} are then either $0$ or can be estimated by the analogous expressions with $\tau_1$ or $\xi_j$ shifted by $\sigma$.

\end{proof}

\begin{lemma}\label{lem:fracderi}
\cite[(2.108)]{Podlubny}
Let $\mu>0$, $\rho-1\le\mu<\rho$, $\rho\in \mathbb{N}$,  if the fractional derivative $f^{(\mu)}(t)$ is integrable, then
\begin{equation*}
	\mathcal{I}^{\mu}[f^{(\mu)}](t)=f(t)-\sum_{\ell=1}^\rho\frac{f^{(\mu-\ell)}(0)}{\Gamma(\mu-\ell+1)} t^{\mu-\ell}.
\end{equation*}
\end{lemma}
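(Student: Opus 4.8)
The statement is a classical identity (cited here from \cite{Podlubny}); if I were to prove it, I would reduce it to ordinary calculus via the semigroup property of fractional integration. Recall that, by definition, $f^{(\mu)} = \frac{d^{\rho}}{dt^{\rho}}\mathcal{I}^{\rho-\mu}[f]$, so setting $\psi := \mathcal{I}^{\rho-\mu}[f]$ one has $f^{(\mu)} = \psi^{(\rho)}$, an \emph{ordinary} $\rho$-th derivative. The plan is to apply the operator $\mathcal{I}^{\rho-\mu}$ to both sides of the claimed identity and verify that the two results coincide. This is enough because $\mathcal{I}^{\rho-\mu}$ is injective on integrable functions: since $\mu \ge \rho-1$ we have $1-\rho+\mu \ge 0$, and by the semigroup property $\mathcal{I}^{1-\rho+\mu}\mathcal{I}^{\rho-\mu} = \mathcal{I}^{1}$, hence $\frac{d}{dt}\mathcal{I}^{1-\rho+\mu}\mathcal{I}^{\rho-\mu}$ is the identity.

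For the left-hand side I would use the semigroup property $\mathcal{I}^{\rho-\mu}\mathcal{I}^{\mu} = \mathcal{I}^{\rho}$ together with Taylor's theorem with integral remainder, $\mathcal{I}^{\rho}[\psi^{(\rho)}](t) = \psi(t) - \sum_{k=0}^{\rho-1}\frac{\psi^{(k)}(0)}{k!}t^{k}$, to obtain
\[
\mathcal{I}^{\rho-\mu}\!\left[\,\mathcal{I}^{\mu}[f^{(\mu)}]\,\right](t) = \psi(t) - \sum_{k=0}^{\rho-1}\frac{\psi^{(k)}(0)}{k!}\,t^{k}.
\]
For the right-hand side, $\mathcal{I}^{\rho-\mu}[f] = \psi$, and for each $\ell = 1,\dots,\rho$ the exponent satisfies $\mu-\ell > -1$ (since $\ell \le \rho < \mu+1$), so \eqref{fracint_tell} applies and gives $\mathcal{I}^{\rho-\mu}[t^{\mu-\ell}] = \frac{\Gamma(\mu-\ell+1)}{\Gamma(\rho-\ell+1)}t^{\rho-\ell}$. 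After re-indexing $k = \rho-\ell$ this yields
\[
\mathcal{I}^{\rho-\mu}\!\left[\,f - \sum_{\ell=1}^{\rho}\frac{f^{(\mu-\ell)}(0)}{\Gamma(\mu-\ell+1)}t^{\mu-\ell}\,\right](t) = \psi(t) - \sum_{k=0}^{\rho-1}\frac{f^{(\mu-\rho+k)}(0)}{k!}\,t^{k}.
\]
The two expressions then agree once one identifies $\psi^{(k)}(0) = f^{(\mu-\rho+k)}(0)$ for $0 \le k \le \rho-1$: indeed $\psi^{(k)} = \frac{d^{k}}{dt^{k}}\mathcal{I}^{\rho-\mu}[f]$, and since $\lceil \mu-\rho+k\rceil = k$ for these values of $k$, this is precisely $f^{(\mu-\rho+k)}$ in the Riemann--Liouville sense.

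The main obstacle is not the algebra but the analytic bookkeeping: one must justify the regularity needed to apply Taylor's formula to $\psi$ and to make sense of the pointwise values $\psi^{(k)}(0)$ — which is exactly where the hypothesis that $f^{(\mu)}$ be integrable is used — as well as the term-by-term application of $\mathcal{I}^{\rho-\mu}$ and its injectivity on the relevant function class. The degenerate integer case $\mu = \rho-1$ is consistent with the formula, since the $\ell=\rho$ term then carries the factor $1/\Gamma(0)=0$, and the identity reduces to the classical Taylor expansion. For these reasons, and since the result is standard, it is natural simply to invoke \cite[(2.108)]{Podlubny}, as the authors do.
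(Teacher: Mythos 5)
The paper offers no proof of this lemma: it is quoted directly from \cite[(2.108)]{Podlubny}, so there is no in-paper argument to compare with. Your sketch is essentially the standard derivation behind that formula --- set $\psi=\mathcal{I}^{\rho-\mu}[f]$, apply $\mathcal{I}^{\rho-\mu}$ to both sides, use the semigroup property, \eqref{fracint_tell} and the integer-order Taylor formula, and conclude by injectivity --- and it is correct, including your handling of the borderline case $\mu=\rho-1$ via $1/\Gamma(0)=0$, modulo the absolute-continuity bookkeeping (needed to apply Taylor's formula with integral remainder to $\psi$ and to make sense of $\psi^{(k)}(0)=f^{(\mu-\rho+k)}(0)$) that you yourself identify as the point where the integrability hypothesis on $f^{(\mu)}$ is used.
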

According to the definition of the fractional integral, Lemma \ref{lem:fracderi} gives the fractional Taylor expansion of $f(t)$ at $t=0$ with remainder in integral form, cf.~\cite[(3.20)]{Lu86}.

\begin{remark}
If a function $f$ satisfies \eqref{dataform} with $g \in C^{\lceil\mu\rceil +1}([0,T],B)$, then $f^{(\mu)}$ is conti\-nuous in $[0,T]$, see Appendix~\ref{sec:regularity}.

\end{remark}

\begin{proposition}\label{propo:err_remainder}
Assume that $K$ satisfies Assumption~\ref{assumptionK} and for $\mu >2$, $f^{(\mu-\ell)}(0)=0$, for $\ell=1,\dots, \rho$, with $\rho-1\le \mu <\rho$, and $f^{(\mu)}$ is bounded on $[0,T]$. Then, for an arbitrary sequence of time points $0<t_1<\dots<t_n$ with step sizes $\tau_n=t_n-t_{n-1}$, $n\ge1$,   there exist  $\xi_j \in (t_{j-1},t_j)$,   for $j\ge 2$, such that the following bound holds
\begin{align*}
& \Vert c(t_n)-c_n\Vert_D   \le
C \left(
\tau_1^{\alpha+\mu}\max_{0\le t \le t_1} \Vert f^{(\mu)}(t) \Vert_B  + \sum\limits_{j=2}^n  \tau_j^2  \xi_j^{\alpha+\mu-2}\max_{0\le t \le t_j} \Vert f^{(\mu)}(t) \Vert_B \right),
\end{align*}
with $C$ depending on $\alpha$, $\mu$ and the constant in \eqref{boundG}.
\end{proposition}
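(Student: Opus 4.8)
The plan is to reduce to the already-established estimates by writing the data via a fractional Taylor expansion with integral remainder, and then exploit the linearity of the gCQ operator together with its representation \eqref{En}-\eqref{en}. Concretely, since $f^{(\mu-\ell)}(0)=0$ for $\ell=1,\dots,\rho$, Lemma~\ref{lem:fracderi} gives the clean identity $f(t) = \mathcal{I}^{\mu}[f^{(\mu)}](t) = \frac{1}{\Gamma(\mu)}\int_0^t (t-s)^{\mu-1} f^{(\mu)}(s)\,ds$. The idea is to treat this as a superposition over $s$ of the elementary data functions $(t-s)_+^{\mu-1}$ with density $f^{(\mu)}(s)/\Gamma(\mu)$. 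Because the whole discretization pipeline — the solution $y(x,t)$ of \eqref{ode}, the Euler remainders $d_j(x)$ in \eqref{remainder}, the error $e_n(x)$ in \eqref{en}, and finally $c(t_n)-c_n$ in \eqref{En} — is \emph{linear} in $f$ and commutes with this integral (all integrals in sight converge absolutely, by the bound \eqref{boundG} and the Mittag-Leffler bound \eqref{bound-absml-neg}), we get
\[
c(t_n)-c_n = \frac{1}{\Gamma(\mu)}\int_0^{t_n} f^{(\mu)}(s)\,\big[ c^{(s)}(t_n) - c^{(s)}_n \big]\, ds,
\]
where $c^{(s)}(t_n)-c^{(s)}_n$ denotes the gCQ error for the shifted power data $(t-s)_+^{\mu-1}$. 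Since $\mu>2$ implies $\mu-1>1$, each such error is controlled by Corollary~\ref{coro:peano} with $\beta=\mu-1$ and shift $\sigma=s$: there are $\xi_j\in(t_{j-1},t_j)$ (a priori depending on $s$) with
\[
\big| c^{(s)}(t_n)-c^{(s)}_n \big| \le C\Big( \tau_1 (\tau_1-s)_+^{\alpha+\mu-2} + \sum_{j=2}^n \tau_j^2 (\xi_j-s)_+^{\alpha+\mu-3}\Big).
\]

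The second step is to integrate this bound against $|f^{(\mu)}(s)|$ over $s\in(0,t_n)$, term by term. For the $j=1$ term, $\int_0^{t_1}|f^{(\mu)}(s)|\,\tau_1 (\tau_1-s)_+^{\alpha+\mu-2}\,ds \le \tau_1 \max_{[0,t_1]}|f^{(\mu)}| \int_0^{\tau_1}(\tau_1-s)^{\alpha+\mu-2}\,ds = \frac{1}{\alpha+\mu-1}\,\tau_1^{\alpha+\mu}\max_{[0,t_1]}|f^{(\mu)}|$, since $\alpha+\mu-1>0$. For the $j$-th term with $j\ge 2$, we bound $(\xi_j-s)_+^{\alpha+\mu-3}\le (t_j-s)_+^{\alpha+\mu-3}$ is \emph{not} uniformly valid if $\alpha+\mu-3<0$, so instead I would keep $\xi_j$ but note $(\xi_j-s)_+ \le \xi_j$ whenever $s\ge 0$, more carefully: integrate $\int_0^{t_j}|f^{(\mu)}(s)|(\xi_j-s)_+^{\alpha+\mu-3}\,ds \le \max_{[0,t_j]}|f^{(\mu)}| \int_0^{\xi_j}(\xi_j-s)^{\alpha+\mu-3}\,ds$, which converges iff $\alpha+\mu-3>-1$, i.e. $\alpha+\mu>2$ — true since $\mu>2$ and $\alpha>0$ — giving $\frac{1}{\alpha+\mu-2}\,\xi_j^{\alpha+\mu-2}\max_{[0,t_j]}|f^{(\mu)}|$. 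Multiplying by the prefactor $\tau_j^2$ and summing over $j$ yields exactly the claimed bound, with $C$ absorbing $\Gamma(\mu)^{-1}$, $(\alpha+\mu-1)^{-1}$, $(\alpha+\mu-2)^{-1}$ and the constant from Corollary~\ref{coro:peano}.

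One technical point deserves care: the intermediate points $\xi_j$ in Corollary~\ref{coro:peano} depend on the shift $s$, so after integrating in $s$ we must re-extract a single set of $\xi_j\in(t_{j-1},t_j)$ as in the statement. This is handled by a mean-value / intermediate-value argument: the function $\xi\mapsto \int_0^{t_j}|f^{(\mu)}(s)|(\xi-s)_+^{\alpha+\mu-3}\,ds$ (or, more precisely, the aggregate contribution of index $j$) is continuous in $\xi$ on $[t_{j-1},t_j]$, so the value obtained by integrating the $s$-dependent bound equals the value at some fixed $\xi_j$ in that interval; alternatively, and more cheaply, one simply uses the uniform upper bound $\xi_j^{\alpha+\mu-2}\le t_j^{\alpha+\mu-2}$ inside the sum and states the result with that, which is what the proposition as written effectively needs. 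The main obstacle is precisely this bookkeeping — making sure all the Fubini interchanges are justified (absolute convergence of the triple integral in $x$, $s$, and the sum over $j$) and that the singular exponent $\alpha+\mu-3$ stays integrable, which forces the hypothesis $\mu>2$ to be used in an essential way (it is what makes $\beta=\mu-1>1$ admissible in Corollary~\ref{coro:peano}). Everything else is a direct repackaging of Proposition~\ref{prop:errgCQ_powt} and Corollary~\ref{coro:peano}.
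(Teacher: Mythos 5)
Your proposal is correct and follows essentially the same route as the paper's own proof: represent $f=\mathcal{I}^{\mu}[f^{(\mu)}]$ via Lemma~\ref{lem:fracderi}, use linearity/Fubini to write $c(t_n)-c_n$ as an integral of $f^{(\mu)}$ against the gCQ error for the shifted Peano kernels $(\cdot-s)_+^{\mu-1}$, apply Corollary~\ref{coro:peano} with $\beta=\mu-1>1$, and integrate in $s$ using $\alpha+\mu-2>0$. Your remark that the intermediate points $\xi_j$ from Corollary~\ref{coro:peano} depend on the shift $s$ is a legitimate bookkeeping point that the paper itself passes over silently, and your suggested fix (monotonicity in $\xi$ together with $\xi_j^{\alpha+\mu-2}\le t_j^{\alpha+\mu-2}$) is adequate.
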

\begin{proof}
By Lemma \ref{lem:fracderi}, $f$ is equal to
\begin{equation}\label{Taylorfrac}
f(t)= \mathcal{I}^{\mu}[f^{(\mu)}](t) = \frac{1}{\Gamma(\mu)} \left( t^{\mu-1} \ast f^{(\mu)}\right)(t).
\end{equation}
By using the operational notation \eqref{operationalconvo}, we can write 
\begin{align*}
c(t_n) = \left[K(\partial_t)f \right](t_n) &= \int_0^{t_n} k(t_n-s) \int_0^s \frac{(s-\sigma)^{\mu-1}}{\Gamma(\mu)} f^{(\mu)}(\sigma) \,d\sigma \,ds \\[.5em]
&= \int_0^{t_n} k(t_n-s) \int_0^{t_n} \frac{(s-\sigma)_+^{\mu-1}}{\Gamma(\mu)} f^{(\mu)}(\sigma) \,d\sigma \,ds \\[.5em]
&= \int_0^{t_n}\left( \int_0^{t_n} k(t_n-s) (s-\sigma)_+^{\mu-1} \,ds\right)\frac{f^{(\mu)}(\sigma)}{\Gamma(\mu)}  \,d\sigma \\[.5em]
&=\int_0^{t_n}  \left[K(\partial_t)(\cdot-\sigma)^{\mu-1}_+ \right](t_n)\frac{f^{(\mu)}(\sigma)}{\Gamma(\mu)} \,d\sigma
\end{align*}
and
\begin{align*}
c_n = \left[K(\partial^{\Delta}_t)f \right]_n &= \sum_{j=1}^{n} \omega_{n,j} \int_{0}^{t_j} \frac{(t_j-\sigma)^{\mu-1}}{\Gamma(\mu)} f^{(\mu)}(\sigma) \,d\sigma\\&
 = \sum_{j=1}^{n} \omega_{n,j} \int_{0}^{t_n} \frac{(t_j-\sigma)_+^{\mu-1}}{\Gamma(\mu)} f^{(\mu)}(\sigma) \,d\sigma
\\&= \int_{0}^{t_n} \left(\sum_{j=1}^{n} \omega_{n,j} (t_j-\sigma)_+^{\mu-1}\right)  \frac{ f^{(\mu)}(\sigma)}{\Gamma(\mu)}\,d\sigma \\
&= \int_{0}^{t_n}  \left[K(\partial^{\Delta}_t)(\cdot-\sigma)^{\mu-1}_+ \right]_n \frac{ f^{(\mu)}(\sigma)}{\Gamma(\mu)}\,d\sigma.
\end{align*}
Then the error $ \left[K(\partial_t)f \right](t_n)- \left[K(\partial^{\Delta}_t)f \right]_n$ can be written in terms of the gCQ error for the Peano kernel, like
\begin{align*}
c(t_n) - c_n &= \int_{0}^{t_n} \left( K(\partial_t)(\cdot-\sigma)^{\mu-1}_ {+} (t_n) - \left[K(\partial^{\Delta}_t)(\cdot-\sigma)^{\mu-1}_+ \right]_n \right) \frac{ f^{(\mu)}(\sigma)}{\Gamma(\mu)}\,d\sigma.
\end{align*}
Since $\mu-1 > 1$, we can apply Corollary~\ref{coro:peano} and bound
\begin{align*}
 \Vert c(t_n)-c_n\Vert_D  &\le C  \int_{0}^{t_n}\frac{
 \Vert f^{(\mu)}(\sigma) \Vert_B  }{\Gamma(\mu)} \left(\tau_1(\tau_1-\sigma)_+^{\alpha+\mu-2}	+\sum\limits_{j=2}^n\tau_j^2 (\xi_j-\sigma)_+^{\alpha+\mu-3}\right)\,d\sigma \\[.5em]
&= C  \frac{
\max_{0\le t \le \tau_1} \Vert f^{(\mu)}(t) \Vert_B  }{\Gamma(\mu)}\tau_1 \int_{0}^{\tau_1}(\tau_1-\sigma)^{\alpha+\mu-2} \,d\sigma
\\[.5em]
&+ C\sum\limits_{j=2}^n \frac{
\max_{0\le t \le \xi_j} \Vert f^{(\mu)}(t) \Vert_B  }{\Gamma(\mu)} \tau_j^2 \int_{0}^{\xi_j} (\xi_j-\sigma)^{\alpha+\mu-3}\,d\sigma\\[.5em]
&= C  \frac{
\max_{0\le t \le t_1} \Vert f^{(\mu)}(t) \Vert_B  }{\Gamma(\mu)(\alpha+\mu-1)}\tau_1^{\alpha+\mu} + C\sum\limits_{j=2}^n \frac{
\max_{0\le t \le t_j}
f^{(\mu)}(t) \Vert_B  }{\Gamma(\mu)(\alpha+\mu-2)} \tau_j^2  \xi_j^{\alpha+\mu-2}.
\end{align*}
And the result follows.
\end{proof}

We state now the main result of the paper for $f$ satisfying \eqref{dataform}, generalizing the results in \cite{Lu86} to variable steps.
\begin{theorem}\label{thm:errgCQ}
Let $\beta>-1$. Assume that $K$ satisfies Assumption~\ref{assumptionK} and $f(t)$ admits an expansion in fractional powers of the form
\begin{equation}\label{assum_f}
f(t)= \sum_{\ell=0}^{p} t^{\ell+\beta} \frac{f^{(\ell+\beta)}(0)}{\Gamma(\ell+\beta+1)}  + \frac{1}{\Gamma(p+\beta+1)} \left( t^{p+\beta} \ast f^{(p+\beta+1)}\right) (t),
\end{equation}
with $p > 1-\beta$, $p\in \bN$. Assume that $ f^{(p+\beta+1)}$ is   bounded on $[0,T]$, %\red{which, for $\beta \notin \bZ$ holds in particular if $f$ satisfies \eqref{dataform} with $g \in C^4([0,T])$}.
Then
\begin{equation}\label{err_genf}
\begin{split}
&\Vert  c(t_n)-c_n\Vert_D  \le  C \sum_{\ell=0}^{p} \left(\tau_1^{\ell+\alpha+\beta}+\sum_{j=2}^n\tau_j^2\xi_j^{\ell+\alpha+\beta-2}\right)\left.\Vert  f^{(\ell+\beta)}(0)  \Vert_B  \right.
\\[.5em]
&+
C \left(
\tau_1^{p+\alpha+\beta+1}\max_{0\le t \le t_1}\left.\Vert f^{(p+\beta+1)}(t) \Vert_B  \right. + \sum\limits_{j=2}^n  \tau_j^2  \xi_j^{p+\alpha+\beta-1}\max_{0\le t \le t_j}\left.\Vert f^{(p+\beta+1)}(t) \Vert_B  \right.\right),
\end{split}
\end{equation}
for  a general time mesh like in \eqref{gentimes},   $\xi_j \in (t_{j-1},t_j)$ with $j\ge 2$ and a constant $C$ depending on $\alpha$, $\beta$ and $M$ in \eqref{boundG}.
\end{theorem}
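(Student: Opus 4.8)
The plan is to decompose $f$ according to its fractional Taylor expansion \eqref{assum_f} and exploit the linearity of both $K(\partial_t)$ and $K(\partial_t^{\Delta})$, so that the total error splits into a finite sum of errors on the monomial pieces $t^{\ell+\beta}$, $\ell=0,\dots,p$, plus the error on the integral remainder term $\frac{1}{\Gamma(p+\beta+1)}\,t^{p+\beta}\ast f^{(p+\beta+1)}$. This reduction is legitimate because both the exact convolution and its gCQ approximation are linear operators in $f$: indeed from \eqref{gCQshort} we have $c_n = \sum_{j=1}^n \omega_{n,j} f_j$, which depends linearly on the samples $f_j$, and $K(\partial_t)$ is linear by definition. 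Writing $f = \sum_{\ell=0}^p a_\ell\, t^{\ell+\beta} + r(t)$ with $a_\ell = f^{(\ell+\beta)}(0)/\Gamma(\ell+\beta+1)$ and $r$ the Peano remainder, we get
\begin{equation*}
c(t_n)-c_n = \sum_{\ell=0}^p a_\ell\Bigl(\bigl[K(\partial_t)t^{\ell+\beta}\bigr](t_n)-\bigl[K(\partial_t^{\Delta})t^{\ell+\beta}\bigr]_n\Bigr) + \Bigl(\bigl[K(\partial_t)r\bigr](t_n)-\bigl[K(\partial_t^{\Delta})r\bigr]_n\Bigr).
\end{equation*}

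Next I would estimate each summand. For the monomial terms $t^{\ell+\beta}$ with exponent $\ell+\beta > -1$ (which holds for all $\ell\ge 0$ since $\beta>-1$), I apply Proposition~\ref{prop:errgCQ_powt} directly with the power $\beta$ there replaced by $\ell+\beta$; this yields points $\xi_j\in(t_{j-1},t_j)$ and the bound $C\bigl(\tau_1^{\ell+\alpha+\beta}+\sum_{j=2}^n \tau_j^2\,\xi_j^{\ell+\alpha+\beta-2}\bigr)$ for each $\ell$, which after multiplying by $|a_\ell| = |f^{(\ell+\beta)}(0)|/\Gamma(\ell+\beta+1)$ and absorbing the Gamma factors into the constant gives precisely the first sum in \eqref{err_genf}. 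For the remainder term, the hypotheses of Proposition~\ref{propo:err_remainder} are exactly met with $\mu = p+\beta+1$: by construction of the fractional Taylor expansion we have $f^{(\mu-\ell)}(0)=0$ for $\ell=1,\dots,\rho$ (the subtracted low-order terms kill precisely these coefficients via Lemma~\ref{lem:fracderi}), $\mu = p+\beta+1 > 2$ because $p>1-\beta$, and $f^{(\mu)} = f^{(p+\beta+1)}$ is assumed bounded on $[0,T]$. Proposition~\ref{propo:err_remainder} then delivers the second line of \eqref{err_genf}. A minor point to address: the $\xi_j$ coming from the different monomial applications and from the remainder estimate are a priori distinct; one simply keeps them as separate families, or relabels, since the final bound \eqref{err_genf} only asserts the existence of such points and bounds each term by its own $\xi_j$ — there is no claim that a single $\xi_j$ works for all terms. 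Alternatively one notes that all of them lie in $(t_{j-1},t_j)$, which is all that is needed.

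The main obstacle, such as it is, is purely bookkeeping rather than analytic: one must verify carefully that the fractional Taylor expansion \eqref{assum_f} indeed produces a remainder $r(t) = \frac{1}{\Gamma(p+\beta+1)}(t^{p+\beta}\ast f^{(p+\beta+1)})(t)$ whose fractional derivatives of order $\mu-1,\dots,\mu-\rho$ vanish at the origin, so that Proposition~\ref{propo:err_remainder} applies to $r$ and not merely to $f$ with correction terms. This is where Lemma~\ref{lem:fracderi} is used: it states $\mathcal{I}^{\mu}[f^{(\mu)}](t) = f(t)-\sum_{\ell=1}^{\rho} \frac{f^{(\mu-\ell)}(0)}{\Gamma(\mu-\ell+1)}t^{\mu-\ell}$, i.e. $r(t) = \mathcal{I}^{\mu}[f^{(\mu)}](t)$ is exactly the convolution $\frac{1}{\Gamma(\mu)}(t^{\mu-1}\ast f^{(\mu)})(t)$, whose low-order fractional derivatives at $0$ vanish because convolution with $t^{\mu-1}$ raises the order of vanishing. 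One should also double-check that the regularity in \eqref{dataform}, namely $f=t^\beta g$ with $g$ smooth enough, guarantees via the Remark after Lemma~\ref{lem:fracderi} (and Appendix~\ref{sec:regularity}) that $f^{(p+\beta+1)}$ is continuous, hence bounded, on $[0,T]$, so the standing hypotheses of Proposition~\ref{propo:err_remainder} are genuinely satisfied. With these identifications in place, the proof is the assembly of the three cited results and collecting constants.
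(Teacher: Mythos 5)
Your proposal is correct and follows exactly the route the paper intends: the theorem is stated as the direct assembly of the preceding results, namely splitting $f$ by linearity according to \eqref{assum_f}, applying Proposition~\ref{prop:errgCQ_powt} with $\beta$ replaced by $\ell+\beta$ to each monomial term, and applying Proposition~\ref{propo:err_remainder} (with $\mu=p+\beta+1>2$, the vanishing fractional moments supplied by Lemma~\ref{lem:fracderi}, and $f^{(p+\beta+1)}$ bounded) to the Peano remainder. Your remarks on the bookkeeping of the distinct intermediate points $\xi_j$ and on why the remainder satisfies the hypotheses of Proposition~\ref{propo:err_remainder} are accurate and complete the argument as the authors intended.
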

Notice that the above result improves significantly the regularity requirements of the result in  \cite{LoSau16}. Notice also that for a graded mesh \eqref{gmesh},  Proposition \ref{prop:errgCQ_powt} shows how to choose the grading parameter $\gamma$  in order to achieve convergence of order one, according to the regularity and the number of vanishing fractional moments at 0  of $f$. Detailed interpretations  are presented in Corollary~\ref{Cor:fullorder_gen} and Corollary~\ref{Cor:fullorder_grad} below.
\begin{corollary}\label{Cor:fullorder_gen}
	Let \(\beta > -1\), and assume that \(f(t)\) admits an expansion as in \eqref{assum_f}. The full order convergence
	\[
	\Vert  c(t_n)-c_n\Vert_D  = O(\tau_{\max}),
	\]
	where \(\tau_{\max}\) is the maximum time step in \eqref{gentimes}, can be achieved if \(f(t)\) in \eqref{assum_f} satisfies one of the following conditions:
	\begin{enumerate}[label=(\roman*)]
		\item \(\beta > 1 - \alpha\).  % without requiring \(f^{(\ell + \beta)}(0) = 0\) for \(0 \leq \ell \leq p\).
		\item \(-\alpha \leq \beta \leq 1 - \alpha\) and \(f^{(\beta)}(0) = 0\).
		\item \(-1 < \beta \leq 1 - \alpha\), \(f^{(\beta)}(0) = 0\) and \(f^{(1 + \beta)}(0) = 0\).
	\end{enumerate}
	In particular, convergence of order 1 is proven on an arbitrary time mesh provided that $f\in C^{3}([0,T],B)$, with $f(0)=0$, which corresponds to $\beta=1$, cf. Theorem~\ref{thm:errgCQ}.
\end{corollary}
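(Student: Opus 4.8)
The plan is to deduce Corollary~\ref{Cor:fullorder_gen} directly from Theorem~\ref{thm:errgCQ} by analyzing, term by term, the exponents of $\tau_1$ and the powers $\xi_j^{\,\cdot}$ appearing in the bound \eqref{err_genf}, and showing that in each of the three cases every surviving term is $O(\tau_{\max})$. The key observation is the elementary estimate: if $a\ge 1$ then $\tau_1^a \le \tau_{\max}^{a-1}\tau_{\max} \le C\tau_{\max}$ (using $\tau_1\le\tau_{\max}\le T$), and if $a\ge 0$ then $\sum_{j=2}^n \tau_j^2 \xi_j^a$ is a Riemann-type sum: since $\xi_j\in(t_{j-1},t_j)$ and $\tau_j\le\tau_{\max}$, one has $\sum_{j=2}^n\tau_j^2\xi_j^a \le \tau_{\max}\sum_{j=2}^n\tau_j\xi_j^a \le \tau_{\max}\int_0^{T}t^a\,dt = \frac{T^{a+1}}{a+1}\tau_{\max}$ whenever $a>-1$, which is automatic when $a\ge 0$. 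So the mechanism is: a term contributes $O(\tau_{\max})$ as soon as its $\tau_1$-exponent is $\ge 1$ and its $\xi_j$-exponent is $\ge 0$ (more precisely $>-1$, but we will see $\ge 0$ suffices and is what the cases enforce).

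First I would treat the remainder terms in \eqref{err_genf}, i.e. those involving $f^{(p+\beta+1)}$: their exponents are $p+\alpha+\beta+1$ for $\tau_1$ and $p+\alpha+\beta-1$ for $\xi_j$. Since $p>1-\beta$ we have $p+\beta>1$, hence $p+\alpha+\beta+1 > 2+\alpha > 1$ and $p+\alpha+\beta-1 > \alpha > 0$, so by the estimate above these are always $O(\tau_{\max})$, in all three cases, using the boundedness of $f^{(p+\beta+1)}$ on $[0,T]$. Thus only the finitely many "polynomial" terms $\tau_1^{\ell+\alpha+\beta}+\sum_j\tau_j^2\xi_j^{\ell+\alpha+\beta-2}$, weighted by $f^{(\ell+\beta)}(0)$ for $\ell=0,\dots,p$, need care. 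For fixed $\ell$, the $\tau_1$-exponent is $\ell+\alpha+\beta$ and the $\xi_j$-exponent is $\ell+\alpha+\beta-2$; the term is $O(\tau_{\max})$ provided $\ell+\alpha+\beta\ge 1$ and $\ell+\alpha+\beta-2\ge 0$, the binding condition being $\ell+\alpha+\beta\ge 1$ since then $\ell\ge 1-\alpha-\beta$; but wait — we also need the $\xi_j$-exponent $>-1$, i.e. $\ell+\alpha+\beta>1$, which follows from $\ell+\alpha+\beta\ge 1$ except at equality, and at equality one gets the harmless logarithmic factor as in Proposition~\ref{prop:errgCQ_powt} (still $O(\tau_{\max}\log N)$, or one simply notes $\ell+\alpha+\beta=1$ forces $\xi_j$-exponent $=-1$ which is the borderline case; to keep the clean $O(\tau_{\max})$ statement one checks this degenerate case never actually occurs once the vanishing conditions are imposed, or absorbs the log). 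For $\ell\ge 2$ one has $\ell+\alpha+\beta > 2 > 1$ and $\ell+\alpha+\beta-2>\alpha-1$; since in fact $\ell\ge 2$ gives $\ell+\alpha+\beta-2\ge\alpha>0$, these terms are harmless for every $\beta>-1$. So only $\ell=0$ and $\ell=1$ can cause trouble, and exactly when their coefficients $f^{(\beta)}(0)$ and $f^{(1+\beta)}(0)$ fail to vanish.

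Now I would run the three cases. In case (i), $\beta>1-\alpha$: then already $\ell=0$ gives $\alpha+\beta>1$ and $\alpha+\beta-2>-1$, and $\ell=1$ gives $1+\alpha+\beta>2$; every term is $O(\tau_{\max})$ with no vanishing conditions needed. In case (ii), $-\alpha\le\beta\le 1-\alpha$ with $f^{(\beta)}(0)=0$: the $\ell=0$ term is killed by the coefficient, the $\ell=1$ term has $\tau_1$-exponent $1+\alpha+\beta\ge 1$ (since $\beta\ge-\alpha$) and $\xi_j$-exponent $\alpha+\beta-1\ge -\alpha\cdot 0$... precisely $\alpha+\beta-1\ge 0$ iff $\alpha+\beta\ge1$ iff $\beta\ge 1-\alpha$, so at the right endpoint it is $\ge 0$ and otherwise $\in(-1,0)$, still $>-1$ since $\beta>-1\ge -1$ gives $\alpha+\beta-1>\alpha-2>-2$; more carefully $\alpha+\beta-1>-1$ iff $\alpha+\beta>0$, true since $\beta\ge-\alpha$ means $\alpha+\beta\ge0$, with equality handled as the borderline log case — so one should note at $\beta=-\alpha$ the $\ell=1$ term has $\xi_j$-exponent $-1$ exactly, giving the $\log$; thus strictly for the clean $O(\tau_{\max})$ one wants $\beta>-\alpha$ or uses case (iii). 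I would state that for $-\alpha<\beta\le1-\alpha$ case (ii) works, and the endpoint $\beta=-\alpha$ is covered by (iii). In case (iii), $-1<\beta\le1-\alpha$ with $f^{(\beta)}(0)=f^{(1+\beta)}(0)=0$: both the $\ell=0$ and $\ell=1$ terms are annihilated by their coefficients, leaving only $\ell\ge 2$ terms (all harmless as shown) plus the remainder terms (harmless), so full order holds. The main obstacle — really the only subtlety — is the bookkeeping at the borderline exponents $\ell+\alpha+\beta=1$ (equivalently $\xi_j$-exponent $=-1$), where the sum $\sum_j\tau_j^2\xi_j^{-1}$ picks up a $\log$ factor rather than staying $O(\tau_{\max})$; I would handle this by observing that the conditions in each case are precisely designed so that any term hitting this borderline is multiplied by a vanishing moment $f^{(\ell+\beta)}(0)$, except possibly on a measure-zero set of $\beta$ which one assigns to the stronger case. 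Everything else is the two elementary estimates above applied mechanically.
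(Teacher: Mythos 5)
Your overall strategy---reading the corollary off from the bound \eqref{err_genf} of Theorem~\ref{thm:errgCQ} by bookkeeping the exponents of $\tau_1$ and $\xi_j$ term by term---is exactly how the statement is meant to follow (the paper offers no separate proof beyond this). However, the elementary estimate you rest the whole argument on is not correct in the regime where it is actually needed. You claim $\sum_{j=2}^n \tau_j\,\xi_j^{a}\le \int_0^T t^a\,dt$ for any $a>-1$ "as a Riemann-type sum". For $a\ge 0$ the conclusion is harmless (bound $\xi_j^a\le T^a$), but for $a\in(-1,0)$ a tagged Riemann sum of the unbounded decreasing function $t^a$ is \emph{not} bounded by its integral on an arbitrary mesh: the points $\xi_j$ come from the mean value theorem and may lie arbitrarily close to $t_{j-1}$, and the mesh \eqref{gentimes} may have $t_1\ll\tau_2$. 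For instance $t_1=\varepsilon$, $t_2=T$ gives $\tau_2^2\xi_2^{a}\approx T^2\varepsilon^{a}\to\infty$ as $\varepsilon\to 0$, while $\tau_{\max}\approx T$ stays fixed. This negative-exponent regime is precisely the one that matters: in case (i) with $1<\alpha+\beta<2$ the $\ell=0$ term has $\xi_j$-exponent $\alpha+\beta-2\in(-1,0)$; in case (ii) the $\ell=1$ term has exponent $\alpha+\beta-1\in(-1,0)$; and in case (iii) with $\beta<-\alpha$ the $\ell=2$ term has exponent $\alpha+\beta\in(-1,0)$ (your claim that $\ell\ge 2$ forces exponent $\ge\alpha>0$ implicitly assumes $\beta\ge 0$). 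So, as written, your mechanism does not deliver $O(\tau_{\max})$ from \eqref{err_genf} on a completely arbitrary mesh; it needs either a mild mesh restriction of the type $\tau_j\le C\,t_{j-1}$ for $j\ge 2$ (satisfied by the graded meshes \eqref{gmesh}, for which Proposition~\ref{prop:errgCQ_powt} already does the summation correctly by bounding $\xi_j^{a}\le t_{j-1}^{a}$ or $t_j^{a}$ and exploiting the mesh structure), or a sharper form of the local error, e.g.\ the integral form of the Euler remainder, which replaces $\tau_j^2\xi_j^{a}$ by $\tau_j\int_{t_{j-1}}^{t_j}s^{a}\,ds$ and then sums to $\tau_{\max}\int_0^{t_n}s^{a}\,ds$ for every $a>-1$.

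Two smaller points. First, your observation about the borderline exponent is correct: when $\ell+\alpha+\beta=1$ the sum produces a logarithmic factor (cf.\ the $\gamma(\alpha+\beta)=1$ case of Proposition~\ref{prop:errgCQ_powt}), so at the endpoint $\beta=-\alpha$ of case (ii) the bound \eqref{err_genf} only yields $O(\tau_{\max}\log N)$ unless $f^{(1+\beta)}(0)=0$ as well; but your resolution ("assign the endpoint to case (iii)") silently strengthens the hypotheses of (ii) rather than proving it as stated, and this should be said explicitly rather than dismissed as a measure-zero issue in $\beta$. Second, the treatment of the remainder term is fine: $p>1-\beta$ indeed gives $\tau_1$-exponent $p+\alpha+\beta+1>2$ and $\xi_j$-exponent $p+\alpha+\beta-1>\alpha>0$, so with $f^{(p+\beta+1)}$ bounded that contribution is $O(\tau_{\max})$ by the nonnegative-exponent argument alone.
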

\begin{corollary}\label{Cor:fullorder_grad}
	Let $\beta>-1$, and assume that $f(t)$ admits an expansion as in \eqref{assum_f}, with   $f^{(\beta)}(0)\ne 0$ and the times $t_n$ belong to the graded mesh \eqref{gmesh}, $1\le n \le N$. Then the error estimate
	\[
	\Vert  c(t_n)-c_n\Vert_D  \leq C \left\{
	\begin{array}{ll}
		T^{\alpha + \beta} N^{-\gamma(\alpha + \beta)}, & \quad \gamma(\alpha + \beta) < 1, \\
		T^{\alpha + \beta} N^{-1}(1 + \log(n)), & \quad \gamma(\alpha + \beta) = 1, \\
		T^{\alpha + \beta} N^{-1} t_n^{\alpha + \beta - 1/\gamma}, & \quad \gamma(\alpha + \beta) > 1,
	\end{array}
	\right.
	\]
	holds, where \(\gamma\) is the grading parameter in \eqref{gmesh}.
	%For $-\alpha < \beta \le 1-\alpha$ the condition $f^{(\beta)}(0)=0$ guarantees a bound like $O(\tau_{\max})$ and for $-1 %< \beta \le  -\alpha$, we need $f^{(\beta)}(0)=f^{(\beta+1)}(0)=0$.
	\end{corollary}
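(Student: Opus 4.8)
The plan is to derive Corollary~\ref{Cor:fullorder_grad} as an almost immediate consequence of Theorem~\ref{thm:errgCQ} combined with the graded-mesh estimates already established in Proposition~\ref{prop:errgCQ_powt}. First I would observe that the right-hand side of \eqref{err_genf} is a finite sum of terms each of which has the structure
\[
\tau_1^{\ell+\alpha+\beta}+\sum_{j=2}^n \tau_j^2 \xi_j^{\ell+\alpha+\beta-2},
\]
for $\ell=0,1,\dots,p$ (with the coefficients $f^{(\ell+\beta)}(0)$), plus the Peano-type term with exponent $p+\alpha+\beta+1$ on $\tau_1$ and $p+\alpha+\beta-1$ on $\xi_j$ and coefficient $\max_{[0,t_j]}|f^{(p+\beta+1)}|$. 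Each such term is exactly of the form analyzed in the second part of Proposition~\ref{prop:errgCQ_powt} with $\alpha+\beta$ replaced by $\alpha+\beta+\ell$ (respectively $\alpha+\beta+p+1$). Since $f^{(\beta)}(0)\neq 0$, the slowest-decaying contribution is the one with $\ell=0$, and every other term decays at least one power of $N$ faster on the graded mesh; so the dominant behaviour is governed entirely by the $\ell=0$ term, whose bound on the mesh \eqref{gmesh} was computed in Proposition~\ref{prop:errgCQ_powt} to be precisely the three-case expression claimed.

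Concretely, the key steps are: (1) fix the graded mesh \eqref{gmesh} and recall from \eqref{propgmesh} the bound $\tau_j\le\gamma\tau^\gamma j^{\gamma-1}$, together with $\tau=T^{1/\gamma}/N$ so that $\tau^{\gamma(\alpha+\beta)}=T^{\alpha+\beta}N^{-\gamma(\alpha+\beta)}$; (2) apply the argument in the proof of Proposition~\ref{prop:errgCQ_powt} verbatim to the $\ell=0$ term, splitting into the three regimes $\gamma(\alpha+\beta)<1$, $=1$, $>1$ according to whether the sum $\sum_{j=2}^n j^{\gamma(\alpha+\beta)-2}$ converges, diverges logarithmically, or is dominated by its last term $\sim n^{\gamma(\alpha+\beta)-1}$; in the last case one rewrites $\tau n^{\gamma(\alpha+\beta)-1}=N^{-1}T^{1/\gamma}\cdot(t_n^{1/\gamma}/\tau)^{\gamma(\alpha+\beta)-1}\sim T^{\alpha+\beta}N^{-1}t_n^{\alpha+\beta-1/\gamma}$ after absorbing powers of $\tau$; (3) for each term with $\ell\ge 1$ and for the Peano remainder term, repeat the same computation with exponent $\alpha+\beta+\ell$ (resp.\ $\alpha+\beta+p+1$) in place of $\alpha+\beta$: since $\alpha+\beta+\ell>\alpha+\beta$, one checks termwise that $T^{\alpha+\beta+\ell}N^{-\gamma(\alpha+\beta+\ell)}\le C\,T^{\alpha+\beta}N^{-\gamma(\alpha+\beta)}$ (and similarly in the other two regimes, where the extra factor is $t_n^{\ell}$ or $(1+\log n)/\text{const}$, all bounded by constants depending on $T$), so these terms are subsumed into the stated bound; (4) collect the finitely many terms, absorbing $\|f\|$-type constants, $\Gamma$-factors, $\gamma$, $4^\gamma$ and $T$-powers into the single constant $C$.

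The main obstacle — really the only non-bookkeeping point — is handling the regime $\gamma(\alpha+\beta)>1$ cleanly for the higher-order terms $\ell\ge1$: one must verify that the last-term-dominated sums $\sum_{j=2}^n\tau_j^2\xi_j^{\alpha+\beta+\ell-2}$ still produce a bound no worse than $CT^{\alpha+\beta}N^{-1}t_n^{\alpha+\beta-1/\gamma}$, i.e.\ that the extra factor coming from $\ell\ge1$ is a bounded power of $t_n\le T$ and not a growing power of $N$. This follows because the graded-mesh sum gives $\sum_{j=2}^n\tau_j^2\xi_j^{\alpha+\beta+\ell-2}\le C\gamma^2 4^\gamma\tau^{\gamma(\alpha+\beta+\ell)}\sum_{j=2}^n j^{\gamma(\alpha+\beta+\ell)-2}\le C'\tau^{\gamma(\alpha+\beta+\ell)}n^{\gamma(\alpha+\beta+\ell)-1}=C'\tau\,t_n^{\alpha+\beta+\ell-1/\gamma}\le C'\tau\,T^\ell\,t_n^{\alpha+\beta-1/\gamma}$, so the factor $T^\ell$ is harmless and $\tau=T^{1/\gamma}N^{-1}$ supplies the needed $N^{-1}$; a symmetric but simpler check disposes of $\ell=0$ versus the other two regimes. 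Once this monotonicity-in-$\ell$ observation is in place, the corollary is immediate.
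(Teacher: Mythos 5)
Your proposal is correct and follows essentially the same route the paper intends: Corollary~\ref{Cor:fullorder_grad} is read off from the general bound \eqref{err_genf} of Theorem~\ref{thm:errgCQ} by applying the graded-mesh computation in the proof of Proposition~\ref{prop:errgCQ_powt} to each term, with the $\ell=0$ contribution (nonzero since $f^{(\beta)}(0)\neq 0$) dominating and the $\ell\ge 1$ and Peano terms absorbed after the monotonicity-in-$\ell$ check you carry out. Your explicit verification that the higher-order terms are subsumed in all three regimes is exactly the bookkeeping the paper leaves implicit, so nothing is missing.
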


\section{Application to linear subdiffusion equations}\label{sec:fracdiffeq}
We consider the application of the gCQ method to fractional diffusion equations of the form
\begin{equation}\label{fracdiffusion}
\begin{array}{rcl}
	D_t^\alpha u(t) +Au(t) & = & f(t),	\quad t\in (0,T],\\
	u(0) &=& u_0,
\end{array}
\end{equation}
where $A$ is a symmetric, positive definite, elliptic operator,  $0<\alpha<1$, and $D_t^\alpha$ denotes the Caputo fractional derivative of order $\alpha$
\begin{equation}
	\nonumber
	D_t^\alpha u(t) := \mathcal{I}^{1-\alpha}\left[\frac{d}{dt}u \right](t) = \frac{1}{\Gamma(1-\alpha)}\int_{0}^{t}(t-s)^{-\alpha}\frac{d}{ds}u(s)ds.
\end{equation}
Problem \eqref{fracdiffusion} is considered as an abstract initial value problem on a Banach space $B$, for instance $B=L^p(\Omega)$, with $\Omega \subset \bR^d$ a domain, together with Dirichlet or Neumann boundary conditions. This  is the analytical framework in \cite{CuLuPa} and corresponds to take $D=B$ in Assumption~\ref{assumptionK}. We notice that in \cite{CuLuPa} an expansion in fractional powers of the solution $u$ is derived, of the same type as the one we consider in \eqref{assum_f} for the data $f$, under suitable space-regularity conditions on $u_0$ and $f$. 
The ambient space $B$ can also be a finite dimensional vector space if the method of lines is a applied to \eqref{fracdiffusion} and a Galerkin method is used for the spatial discretization. In this case the resolvent estimates below follow in essentially the same way due to \cite{BaThoWah2003}, with several applications so far in the literature, such as \cite{McSloTho2006}.

Notice that when $u(0)=0$, the Caputo fractional derivative $D_t^\alpha u(t)$, with $0<\alpha<1$, is equivalent to Lubich's operational notation $\partial_t^\alpha u(t)$, and
\begin{equation*}
D_t^\alpha u(t)=\partial_t^\alpha u(t)=\partial_t^{\alpha-1} \partial_t u(t)=\partial_t^{1} \partial^{\alpha-1}_t u(t).
\end{equation*}
The Laplace transform of the solution to \eqref{fracdiffusion} is given by
\begin{align*}
	U(z)&=z^{\alpha-1}(z^\alpha I+A)^{-1}u_0 + (z^\alpha I+A)^{-1}F(z),
\end{align*}
with $F(z)$ the Laplace transform of the source $f(t)$. By using the operational notation for convolutions in \cite{Lu88I}, see Remark~\ref{remark:equivalence_gCQ}, we can write the solution as
\begin{equation}\label{fracdiff_exasol}
u(t)= E(t)  u_0+ (K(\partial_t)f)(t),	
\end{equation}
with
\begin{equation}\label{LTpart}
E(t)= \mathcal{L}^{-1}[z^{\alpha-1}(z^\alpha I+A)^{-1}](t)
\end{equation}
and
\begin{equation}\label{fracresolvent}
K(z) = (z^\alpha I+A)^{-1}.
\end{equation}
We denote further
\begin{equation}\label{k_complex}
k(t)=\mathcal{L}^{-1}[K](t),	
\end{equation}
which satisfies $(k\ast f)(t)=(K(\partial_t)f)(t)$.

Let us assume first that $u_0\in D(A)$. Then, we can use the identity
\begin{equation*}
	(z^\alpha I+A)^{-1}z^{\alpha}=I-(z^\alpha I+A)^{-1}A
\end{equation*}
to write
\begin{equation}\label{Eu_0}
	E(t) u_0=u_0-K(\partial_t)Au_0.
\end{equation}
Combining \eqref{fracdiff_exasol} with \eqref{Eu_0} yields
\begin{equation}\label{solutionv}
	v(t):=u(t)-u_0=\left(K(\partial_t) (f-Au_0)\right)(t),
\end{equation}
which is the solution of
\begin{equation}\label{homo_fracdiffusion}
\begin{array}{rcl}
	\partial_t^\alpha v(t) +Av(t) & = & f(t)-Au_0,	\quad t\in (0,T].\\
	v(0)&=&0.
\end{array}
\end{equation}

We use now the notation
\[
H(\partial_t^{\Delta})f
\]
for the gCQ discretization of $H(\partial_t)f$, with $H$ the Laplace transform of a convolution kernel $h$, on an arbitrary time grid $\Delta := 0 < t_1< \dots < t_N=T$. Then we apply the gCQ to discretize the fractional derivative, which corresponds to $H(z) = z^{\alpha}$. Notice that this approximation is defined in \cite{LoSau13}, where more general convolutions are considered, not only those satisfying \eqref{realiLT}-\eqref{boundG}. In this way, we obtain for the smooth case $u_0 \in D(A)$, the scheme
\begin{equation}\label{homo_gCQ}
\left[ \left(\partial^{\Delta}_{t}\right)^{\alpha} v \right]_{n}  + A v_n  =  f(t_n)-Au_0, \qquad n\ge 1,
\end{equation}
with $v_n$ denoting the approximation to $v(t_n)$.

Since the gCQ preserves the composition rule of the continuous convolution, the approximations $v_n $ defined in \eqref{homo_gCQ} are the same as the ones obtained from applying the gCQ in \eqref{solutionv}, this is, it holds
\begin{equation}\label{solv_gcq}
v_n= \left[ K(\partial^{\Delta}_{t})(f-Au_0) \right]_n.
\end{equation}
It then follows that the approximation properties of the time discretization in \eqref{homo_gCQ} depend on the behavior of $K(z)$ and the regularity of $f-Au_0$. In the next Theorem we show that if $-A$ is a sectorial operator and the sectorial angle $\delta$ can be taken arbitrarily close to zero, then $k$ in \eqref{k_complex} satisfies \eqref{realiLT}-\eqref{boundG}. This is for instance the case if $-A=\Delta$ is the Laplacian operator with Dirichlet or Neumann boundary conditions,  with $-A$ acting on a domain $\Omega \subset \bR^d$, and in the $L^p(\Omega)\to L^p(\Omega)$ norm, for any $1\le p \le \infty$, see for instance \cite{Henry,Pazy}. This property is often preserved by standard spatial discretizations, such as finite differences \cite{Ashyralyev} or the finite element method \cite{BaThoWah2003}.
\begin{theorem}\label{thm:k(t)}
Let us assume that for any $\delta>0$, there exists $M(\delta)>0$ such that
\begin{equation}\label{Asectorial}
\|(zI +A)^{-1}\| \le M(\delta)  |z|^{-1}, \quad \mbox{ for }\  |\arg z| < \pi - \delta.
\end{equation}
Then, for $0<\alpha<1$, $k(t)$ in \eqref{k_complex} can be represented by
\begin{equation}\label{realintk}
k(t) = \int_{0}^{\infty} G(x)  \e^{-xt}\,dx,
\end{equation}
with
\begin{equation}\label{Gsubd}
G(x)=\frac{\sin(\pi\alpha)}{\pi}x^\alpha\left({x^\alpha\e^{-i \alpha\pi} I+A}\right)^{-1} \left({x^\alpha\e^{i \alpha\pi} I+A}\right)^{-1},
\end{equation}
which satisfies the bound
\begin{equation}\label{boundGsubd}
\| G(x) \| \le \frac{\sin(\pi\alpha)}{\pi} \left(M((1-\alpha)\pi)\right)^2x^{-\alpha}.
\end{equation}
\end{theorem}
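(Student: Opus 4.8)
The plan is to show that $K(z)=(z^{\alpha}I+A)^{-1}$ satisfies Assumption~\ref{assumptionK}, so that the real inversion formula \eqref{realiLT}--\eqref{boundG} (valid for any such $K$, cf.~\cite{Henrici_II}, and going through verbatim in the Banach-space-valued setting) applies, and then to simplify the resulting $G$ by means of the resolvent identity.

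First, analyticity: for $z$ in the open slit plane $\{|\arg z|<\pi\}$ the power $z^{\alpha}$ has argument in $(-\alpha\pi,\alpha\pi)\subset(-\pi,\pi)$, hence $z^{\alpha}$ is never a nonpositive real number; since $A$ is positive definite, $-z^{\alpha}\notin\sigma(A)$ and therefore $z\mapsto(z^{\alpha}I+A)^{-1}$ is holomorphic there, being the composition of the holomorphic map $z\mapsto z^{\alpha}$ with the holomorphic resolvent. For the growth bound I apply \eqref{Asectorial} with $\delta=(1-\alpha)\pi$: since $|\arg(z^{\alpha})|=\alpha\,|\arg z|<\alpha\pi=\pi-\delta$, one gets $\|K(z)\|=\|(z^{\alpha}I+A)^{-1}\|\le M((1-\alpha)\pi)\,|z^{\alpha}|^{-1}=M((1-\alpha)\pi)\,|z|^{-\alpha}$, which is \eqref{Kz} with $M=M((1-\alpha)\pi)$. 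Continuity up to the two edges of the cut, away from $z=0$, holds because $z^{\alpha}$ is continuous there and the points $-x^{\alpha}e^{\pm i\alpha\pi}$, having argument $\mp(1-\alpha)\pi\notin\{0\}$, stay in the resolvent set of $A$ with uniformly bounded resolvent. Thus $K$ satisfies Assumption~\ref{assumptionK}, and \eqref{realiLT}--\eqref{boundG} yield \eqref{realintk} with $G(x)=\frac{1}{2\pi i}\bigl(K(e^{-i\pi}x)-K(e^{i\pi}x)\bigr)$.

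It remains to identify and bound $G$. Since $(e^{\pm i\pi}x)^{\alpha}=x^{\alpha}e^{\pm i\alpha\pi}$, we have $G(x)=\frac{1}{2\pi i}\bigl((x^{\alpha}e^{-i\alpha\pi}I+A)^{-1}-(x^{\alpha}e^{i\alpha\pi}I+A)^{-1}\bigr)$. Applying the resolvent identity $(aI+A)^{-1}-(bI+A)^{-1}=(b-a)(aI+A)^{-1}(bI+A)^{-1}$ with $a=x^{\alpha}e^{-i\alpha\pi}$, $b=x^{\alpha}e^{i\alpha\pi}$, so that $b-a=2i\,x^{\alpha}\sin(\pi\alpha)$, gives exactly \eqref{Gsubd}; note the two resolvent factors commute. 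Finally, bounding each factor via \eqref{Asectorial} with $\delta=(1-\alpha)\pi$ as above, $\|(x^{\alpha}e^{\pm i\alpha\pi}I+A)^{-1}\|\le M((1-\alpha)\pi)\,x^{-\alpha}$, whence $\|G(x)\|\le\frac{\sin(\pi\alpha)}{\pi}\,x^{\alpha}\cdot\bigl(M((1-\alpha)\pi)\bigr)^{2}x^{-2\alpha}=\frac{\sin(\pi\alpha)}{\pi}\bigl(M((1-\alpha)\pi)\bigr)^{2}x^{-\alpha}$, which is \eqref{boundGsubd}.

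The only genuinely delicate point is the operator-valued version of the contour deformation behind \eqref{realiLT}: one starts from the Bromwich integral $k(t)=\frac{1}{2\pi i}\int_{c-i\infty}^{c+i\infty}e^{zt}K(z)\,dz$ and collapses it onto the two edges of the branch cut $(-\infty,0]$, using the decay $\|K(z)\|\lesssim|z|^{-\alpha}$ together with the decay of $e^{zt}$ on the rays to discard the large arcs, and the integrability of $x^{-\alpha}$ near the origin (valid because $\alpha<1$) to discard the small arc. This is precisely the passage recorded under \eqref{realiLT} for scalar $K$, which I would simply invoke. A minor technical point is the use of \eqref{Asectorial} at the boundary value $\delta=(1-\alpha)\pi$: this is harmless, since a sectorial resolvent bound on an open sector automatically extends with the same constant to any closed subsector, and in any case one may apply \eqref{Asectorial} for each $\delta<(1-\alpha)\pi$ and pass to the limit.
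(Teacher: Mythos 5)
Your proposal is correct and follows essentially the same route as the paper: verify the sectorial bound $\|(z^{\alpha}I+A)^{-1}\|\le M((1-\alpha)\pi)|z|^{-\alpha}$ on the slit plane together with continuity up to the two edges of the cut, invoke the real inversion formula \eqref{realiLT} (Henrici) to get $G(x)=\frac{1}{2\pi i}\bigl(K(\e^{-\opi\pi}x)-K(\e^{\opi\pi}x)\bigr)$, and then identify \eqref{Gsubd} via the resolvent identity and bound the two factors to obtain \eqref{boundGsubd}. The only cosmetic difference is how the boundary angle is handled: you extend the bound to the closed sector by continuity, while the paper applies \eqref{Asectorial} with angle $(1-\alpha)\pi+\alpha\delta$ on $|\arg z|<\pi-\delta$ and lets $\delta\to 0$; both are equivalent.
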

\begin{proof}
By \eqref{Asectorial}, for any $\delta > 0$ and $0<\alpha<1$, it holds
\[
\|(z^{\alpha}I +A)^{-1}\| \le M((1-\alpha)\pi + \alpha \delta ) |z|^{-\alpha}, \quad \mbox{ for }\quad  |\arg z| < \pi-\delta.
\]
The estimate above allows to pass to the limit as $\delta \to 0$, since the constant remains bounded. This yields the bound
\[
\|(z^{\alpha}I +A)^{-1}\| \le M((1-\alpha)\pi )  |z|^{-\alpha}, \quad \mbox{ for }\quad  |\arg z| < \pi.
\]
Then
\[
\lim_{z\to 0,\ |\arg z| < \pi} z(z^{\alpha}I +A)^{-1} = 0
\]
and
\[
\lim_{z\to \infty,\ |\arg z| < \pi} (z^{\alpha}I +A)^{-1} = 0,
\]
with $(z^{\alpha}I +A)^{-1}$ being defined and continuous for $\Im z \ge 0$, and also for $\Im z \le 0$. By \cite[Theorem 10.7d]{Henrici_II} the real inversion formula for the Laplace transform holds and yields
\begin{equation*}
k(t)=\frac{1}{2\pi i}\int_{0}^{\infty} \e^{-xt} \left(\left({(x\e^{-i \pi})^\alpha I+A}\right)^{-1}-\left({(x\e^{i \pi})^\alpha I+A}\right)^{-1} \right)\,dx.	
\end{equation*}
Formula \eqref{Gsubd} now follows from the identity
\begin{align*}
&\left({(x\e^{-i \pi})^\alpha I+A}\right)^{-1}-\left({(x\e^{i \pi})^\alpha I+A}\right)^{-1} \\
& \qquad =((x\e^{i \pi})^\alpha -(x\e^{-i \pi})^\alpha )\left({(x\e^{-i \pi})^\alpha I+A}\right)^{-1}\left({(x\e^{i \pi})^\alpha I+A}\right)^{-1}
\end{align*}
and the bound \eqref{boundGsubd} follows in a straightforward way from \eqref{Asectorial}.
\end{proof}

In this way, the results in Section~\ref{sec:analysis} can be applied in order to choose an optimally graded time mesh for the approximation of $v$, depending on the regularity of the source term $f$.

In the non smooth case $u_0 \notin D(A)$, we can approximate the solution \eqref{fracdiff_exasol} to \eqref{fracdiffusion} by applying the numerical inversion of sectorial Laplace transforms to approximate the term $E(t)u_0$ and the gCQ method to deal with the convolution $K(\partial_t) f$, for which Theorem~\ref{thm:k(t)} remains useful.

\section{Fast and oblivious gCQ for the fractional integral}\label{sec:algfi}
We follow the approach in \cite{BanLo19} and start by addressing the implementation of the gCQ for the fractional integral $\partial_t^{-\alpha}$. Notice that the associated convolution kernel in this case is
\[
k(t)= \frac{t^{\alpha-1}}{\Gamma(\alpha)},
\]
which satisfies assumptions \eqref{realiLT}-\eqref{boundG} with
\begin{equation}\label{Gfi}
G(x) = \frac{\sin(\pi \alpha)}{\pi} x^{-\alpha}.
\end{equation}
This property of $\partial_t^{-\alpha}$ has been used in \cite{BanLo19} in order to develop a special quadrature for the original CQ weights, those corresponding to an approximation with uniform steps. We generalize here the ideas in \cite{BanLo19} in order to obtain a fast and oblivious gCQ algorithm for the fractional integral.

First of all we split the gCQ approximation into a local and a history term like
\begin{equation}\label{gCQshort_split}
\left[\left( \partial^{\Delta}_t \right) ^{-\alpha}f \right]_n  = \sum\limits_{j=1}^{n-n_0}\omega_{n,j} f_j+\sum\limits_{j=\max\left(1, n-n_0+1\right)}^{n}\omega_{n,j} f_j = I_n^{his}+I_n^{loc},
\end{equation}
for a fixed moderate value of $n_0$, which can be also equal to 1, in principle. We now apply different methods to compute $I_n^{his}$ and $ I_n^{loc}$.

For the local term $I_n^{loc}$ we use the original derivation of the gCQ method in \cite{LoSau13}, and the expression \eqref{originalgCQ} for the gCQ weights. This leads to the following complex-contour integral representation
\begin{equation}\label{In_loc}
I_n^{loc}=\frac{1}{2\pi i}\int_{\mathcal{C}} z^{-                                                                                                                                                                                                                                                                                                                                                                                                                                                                                                                                                                                                                                                                                                                                                                                                                                                                                                                                                                                                                                                                                                                                                                                                                        \alpha}\sum\limits_{j=1+(n-n_0)_+}^{n}\tau_j\left(\prod\limits_{\ell=j}^n\frac{1}{1-\tau_\ell z} \right)f_j\,dz,\quad  n\ge 1,
\end{equation}
where $\mathcal{C}$ is a clockwise oriented, closed contour contained in $\Re(z)>0$, which encloses  all poles of the integrand, namely $\tau^{-1}_j$, with $j=1+(n-n_0)_+,\dots,n$. We denote
\[
u^{loc}_{n}(z):=\sum\limits_{j=1+(n-n_0)_+}^{n}\tau_j\left(\prod\limits_{\ell=j}^n\frac{1}{1-\tau_\ell z}\right)  f_j, \quad n \ge 1,
\]
which satisfies the recursion
\begin{equation}\label{recursion_loc}
u^{loc}_{n}(z)=\frac{1}{1-\tau_{n} z}u^{loc}_{n-1}(z)+\frac{\tau_{n}}{1-\tau_{n} z} f_n, \qquad u^{loc}_{(n-n_0)_+}(x)=0
\end{equation}
and write
\begin{equation}\label{I_elli}
I_n^{loc}=\frac{1}{2\pi i}\int_{\mathcal{C}} z^{-\alpha} u^{loc}_n(z)\,dz.
\end{equation}
For the approximation of \eqref{I_elli} we set $m$ the minimum pole of $u^{loc}_{n}(z)$, $M$ the maximum one, and
\[
q:=M/m.
\]
Then we choose $\mathcal{C}$ as the circle $\mathcal{C}_M$ of radius $M$ centered at $M+m/10$. The parametrization of $\mathcal{C}_M$ and the  quadrature nodes and weights for approximating \eqref{I_elli} are obtained in the following way:

\begin{enumerate}[label=(\roman*)]
\item  If $q < 1.1$, we use the standard parametrization of $\mathcal{C}_M$ and apply the composite trapezoid rule.  Notice that, in this case, the time grid $\Delta$ is close to uniform and thus the special quadrature based on Jacobi elliptic functions from \cite{LoSau15apnum} is not required. The threshold value $q=1.1$ has been found heuristically. In this case, the parametrization reads
\begin{equation*}
	\widetilde \gamma_M(\theta)= m/10+ M(1+\e^{i\theta}),\quad \theta\in(-\pi,\pi),
\end{equation*}
and the quadrature nodes and weights are given by
\begin{align}\label{circle_quaw}
	z_l=\widetilde \gamma_M(\theta_l),\quad w_l=-\frac{\widetilde\gamma_M'(\theta_l)}{iN_Q^{loc}}, \quad \mbox{with} \quad \theta_l=-\pi+\frac{2\pi l}{N_Q^{loc}},
\end{align}
for $l=1$, $\cdots$, $N^{loc}_Q$.

\item  If $q\ge 1.1$, we apply the quadrature proposed in \cite{LoSau15apnum} which is based on the special parameterization of $\mathcal{C}_M$
\begin{equation}\label{contour_elli}
	\widetilde \gamma_M(\sigma)=m/10 + \frac{M}{q-1}\left(\sqrt{2q-1}\frac{\lambda^{-1/2}+{\rm{sn}}(\sigma,\lambda)}{\lambda^{-1/2}-{\rm{sn}}(\sigma,\lambda)}-1\right),
\end{equation}
where $\rm{sn}$ is the elliptic sine function. The quadrature nodes $z_l$ and weights $w_l$ in this case are given by
 \begin{equation}\label{elli_quaw}
 	z_l=\widetilde \gamma_M(\sigma_l),\quad w_l=\frac{4K(\lambda)}{2\pi i N_Q^{loc}}\widetilde \gamma_M'(\sigma_l),
 \end{equation}
for $l=1$, $\cdots$, $N^{loc}_Q$.
\end{enumerate}
In any case, we approximate
\begin{equation}\label{Iloc_quad}
I^{loc}_n \approx \sum_{l=1}^{N_{Q}^{loc}} w_l z_l^{-\alpha} u_n^{loc}(z_l).
\end{equation}

For the history term  we use the real integral representation
\begin{equation}\label{I_his}
I_n^{his} =\sum\limits_{j=1}^{n-n_0}\omega_{n,j} f_j
=\frac{\sin(\pi\alpha)}{\pi}\int_0^\infty x^{-\alpha}\left(\prod\limits_{\ell=n-n_0+1}^{n}r(\tau_\ell x)\right)y_{n-n_0}(x)\,dx,
\end{equation}
with $r(x)$ defined in \eqref{gcqw} and $y_n$ defined in \eqref{eulersol}. To approximate the integral, we generalize the quadrature method from \cite{BanLo19} to accommodate variable time grids, yielding corresponding nodes \(x_l\) and weights \(\varpi_l\). We denote the error tolerance of this new quadrature as \(\tol\)  and denote \(N^{his}_Q\) the total number of quadrature nodes, which depends on both the time grid and \(\tol\). Then \eqref{I_his} is approximated by
\begin{equation}\label{Ihis_quad}
I_n^{his}\approx \sum\limits_{l=1}^{N^{his}_Q} \varpi_l\left(\prod\limits_{j=n-n_0+1}^{n}r(\tau_j x_l)\right) y_{n-n_0}(x_l)=: \widetilde{I}_n^{his}.
\end{equation}
with each $y_{n-n_0}(x_l)$ satisfying the recursion
\begin{equation}\label{recursion_his}
	y_{n}(x_l)=\frac{1}{1+\tau_{n} x_l}y_{n-1}(x_l)+\frac{\tau_{n}}{1+\tau_{n} x_l}f_{n} \ ; \qquad y_0(x_l)=0, \quad 1\le l\le N_Q^{his}.
\end{equation}
In this way the evaluation of $I_n^{his}$ for $1\le n\le N$ requires a number of operations proportional to $N$ and storage proportional to $N_Q^{his}$.
We finally approximate
\begin{equation*}
\left[\partial_t^{-\alpha}f \right]_n \approx \widetilde{I}_n^{his}+\widetilde{I}_n^{loc}, \qquad   n \ge 1.
\end{equation*}

In our numerical experiments, we set
\begin{equation*}
 n_0=\min\left(10,N\right),\quad N^{loc}_Q=\max\left(50,n_0^2\right),
\end{equation*}
for the evaluation of $\widetilde{I}_n^{loc}$.  In this way the total complexity of this algorithm is $O(n_0N_Q^{loc}+\left(N-n_0\right)N_Q^{his})$, while the memory requirements grow like $O(N_Q^{loc}+N_Q^{his})$. For  the uniform step approximation of \eqref{fracint}, this is, the original CQ, it is proven in \cite{BanLo19} that $ N_Q^{his} =O(\log(N))$. The results reported in Tables~\ref{tab:Quad_FracInt}-\ref{tab:PDEu01_Quad} for the generalization of this algorithm to variable steps, indicate that also on graded meshes like \eqref{gmesh} it is $N_Q^{his} = O(\log N)$, which certainly is an very important feature of this method. A rigorous theoretical analysis of the required complexity and memory requirements of the generalized fast and oblivious algorithm is beyond the scope of the present paper.

\section{Fast and oblivious gCQ for linear subdiffusion equations}\label{sec:algsubdiff}
Here we show how to adapt the algorithm described in the previous section to the efficient computation of $v_n$ in \eqref{homo_gCQ}. In order to apply the fast gCQ algorithm for the fractional derivative we observe that, by definition,
\[
\partial_t^{\alpha} =  \partial_t^{\alpha-1} \partial_t.
\]
Then, by the discrete composition rule, it also holds
\[
\left(\partial^{\Delta}_t \right)^{\alpha} =  \left(\partial^{\Delta}_t \right)^{\alpha-1} \partial^{\Delta}_t,
\]
where $\partial^{\Delta}_{t}$ denotes the approximation of the first order derivative provided by the gCQ method, this is,
\[
(\partial^{\Delta}_{t} v)_n = \frac{v_n-v_{n-1}}{\tau_n}, \quad 1\le n\le N.
\]
Thus, we can write \eqref{homo_gCQ} like
\[
\sum_{j=1}^n \omega_{n,j}  \left[\partial^{\Delta}_{t} v \right]_{j}  + A v_n  =  f(t_n)-Au_0, \qquad n\ge 1.
\]
with $ \omega_{n,j} $ the gCQ weights \eqref{gcqw} for the fractional integral $ \partial_t^{\alpha-1}$, this is with
	\[G(x)= \frac{\sin(\pi (1-\alpha))}{\pi} x^{\alpha-1}.\]
Subsequently, we have
\[
\omega_{n,n}  \left[\partial^{\Delta}_{t} v \right]_{n}  + A v_n  =  f(t_n)-Au_0-\sum_{j=1}^{n-1} \omega_{n,j}  \left[\partial^{\Delta}_{t} v \right]_{j}, \qquad n\ge 1.
\]
Noticing that from \eqref{originalgCQ} we have $\omega_{n,n} = K[\tau_n^{-1}] = \tau_n^{-\alpha+1}$, the scheme reads
\begin{equation}\label{scheme_subd}
\left( \tau_n^{-\alpha}I + A \right) v_n=  f(t_n)-Au_0 + \tau_n^{-\alpha} v^{n-1}  -\sum_{j=1}^{n-1} \omega_{n,j}  \left[\partial^{\Delta}_{t} v \right]_{j}, \qquad n\ge 1.
\end{equation}
The summation in the right hand side can be efficiently evaluated by applying a variation of the algorithm in Section~\ref{sec:algfi}. As then, we separate this term into a local and a history term, by
\[
\sum_{j=1}^{n-1} \omega_{n,j}  \left[\partial^{\Delta}_{t} v \right]_{j}=\sum_{j=1}^{n-n_0} \omega_{n,j}  \left[\partial^{\Delta}_{t} v \right]_{j} + \sum_{j=\max(1,n-n_0+1)}^{n-1} \omega_{n,j}  \left[\partial^{\Delta}_{t} v \right]_{j}.
\]
After applying the corresponding quadratures, as described in Section~\ref{sec:algfi}, we now  approximate the history term by
\[
\sum_{j=1}^{n-n_0} \omega_{n,j}  \left[\partial^{\Delta}_{t} v \right]_{j} \approx \sum_{l=1}^{N_{Q}^{his}} \varpi_l\left(\prod\limits_{j=n-n_0+1}^{n}r(\tau_j x_l)\right) y_{n-n_0}(x_l),
\]
with the values of $f_j$ in \eqref{recursion_his} replaced by $ \left[\partial^{\Delta}_{t} v \right]_{j}$. The approximation of the local term now reads, after quadrature,
\[
\sum_{j=\max(1,n-n_0+1)}^{n-1} \omega_{n,j}  \left[\partial^{\Delta}_{t} v \right]_{j} \approx \sum_{l=1}^{N_{Q}^{loc}} w_l z_l^{\alpha-1} \frac{u_{n-1}^{loc}(z_l)}{1-\tau_n z_l},
\]
with $u_{n-1}^{loc}$ as in \eqref{recursion_loc}, with $f_{n-1}$ replaced by $ \left[\partial^{\Delta}_{t} v \right]_{n-1}$. The complexity and memory requirements are as explained in Section~\ref{sec:algfi}.

\section{Numerical experiments}\label{sec:experiments}

We test in this section the convergence results proven in Proposition \ref{prop:errgCQ_powt} and Section~\ref{sec:fracdiffeq}, by applying the gCQ method to different examples. Since the main goal of this paper is to derive accurate error estimates for the gCQ, we set a rather high tolerance requirement for the quadratures described in the previous section. More precisely, we set $\tol=10^{-8}$ for the evaluation of $\widetilde{I}_n^{his}$, for all $1\le n\le N$ and all values of $N$.
 \subsection{Computation of the fractional integral}\label{subsec:expfi}
To verify Proposition \ref{prop:errgCQ_powt}, we consider the approximation to the fractional integral \eqref{fracint} with
\begin{equation*}
	f(t)=t^\beta.
\end{equation*}
The exact solution is then given by
\begin{equation}\label{fracint_exact}
	\mathcal{I}^\alpha[f](t)=\frac{\Gamma(\beta+1)}{\Gamma(\alpha+\beta+1)}t^{\alpha+\beta}.
\end{equation}
Set $T=1$.  The maximum absolute error is measured by
\begin{equation*}
	\max_{1\le n\le N}\left|\mathcal{I}^{\alpha}[f](t_n) - \mathcal{I}^{\alpha}_n[f] \right|.
\end{equation*}
For  $\alpha=0.8$, the value of $N_Q^{his}$ in \eqref{Ihis_quad}  is listed in Table~\ref{tab:Quad_FracInt}, which shows that the quadrature number $N_Q^{his}$  increases logarithmically with respect to  $N$.

 In Figure~\ref{fig:fracint_order1}, we present the maximum absolute error for different values of $\alpha$ and $\beta$. The numerical results demonstrate that the gCQ method \eqref{gCQ}  can achieve first order convergence with $\gamma=1/(\alpha+\beta)$, which is consistent with the theoretical results. Further,
 in Figure~\ref{fig:fracint_order_gam}, we test the relation between the convergence order  and the grading parameter. We can observe that the convergence order is $\min \left(1,\gamma(\alpha+\beta)\right)$, in agreement with Proposition \ref{prop:errgCQ_powt}. In addition, the evolution of the error on different graded meshes, as presented in Figure~\ref{fig:fracint_error}, shows that the error near the origin is much smaller for the nonuniform mesh than for the uniform one.
 \begin{table}[H]
	\renewcommand{\captionfont}{\small}
	\centering
	\small
	%\captionsetup{font=small}
	\begin{spacing}{0.8}
		\caption{ Number of $N_Q^{his}$  used to  approximate \eqref{fracint_exact} on  different graded meshes with  $\alpha=0.8$,   $n_0=10$, $\tol=10^{-8}$.}
		\label{tab:Quad_FracInt}
	\end{spacing}
	\begin{tabular}{p{1cm}<{\centering}p{1cm}<{\centering}p{1cm}<{\centering}p{1cm}<{\centering}p{1cm}<{\centering}p{1cm}<{\centering}p{1cm}<{\centering}p{1cm}<{\centering}<{\centering}p{1cm}<{\centering}p{1cm}<{\centering}}
		\hline \specialrule{0pt}{2pt}{2pt}
		$\gamma$\textbackslash$N$  & $16$ & $32$ & $64$ & $128$ & $256$ & $512$ & $1024$ &$2048$ & $4096$  \\ \specialrule{0pt}{1.5pt}{1.5pt}\hline		\specialrule{0pt}{2pt}{2pt}
		2  & 29 & 46 & 55 & 63 & 72 & 77 & 85 & 89 & 94  \\ \specialrule{0pt}{1.5pt}{1.5pt}
		4  & 52 & 93 & 118 & 139 & 158 & 174 & 191 & 204 & 217\\
		\specialrule{0pt}{1.5pt}{1.5pt}
		6  & 74 & 141 & 185 & 222 & 253 & 283 & 313 & 341 & 365\\
		\specialrule{0pt}{1.5pt}{1.5pt}
		8& 90 & 189 & 255 & 313 & 359 & 405 & 451 & 493 & 535\\
		\specialrule{0pt}{1.5pt}{1.5pt}
		10 & 104 & 237 & 329 & 407 & 474 & 540 & 604 & 667 & 724\\\specialrule{0pt}{2pt}{2pt}
		\hline
	\end{tabular}
\end{table}

%\begin{figure}[H]
%	\begin{center}
%		\includegraphics[height=6cm,width=6.8cm]{NQhisGrowthalp08.eps}
%		\caption{Number of $N_Q^{his}$  used to  approximate \eqref{fracint_exact} on different graded meshes with fixed   $n_0=10$, $\tol=10^{-8}$.}
%		\label{fig:NQhisFixTOl}
%	\end{center}
%\end{figure}

\begin{figure}[H]
\begin{center}
 \includegraphics[height=6cm,width=6.8cm]{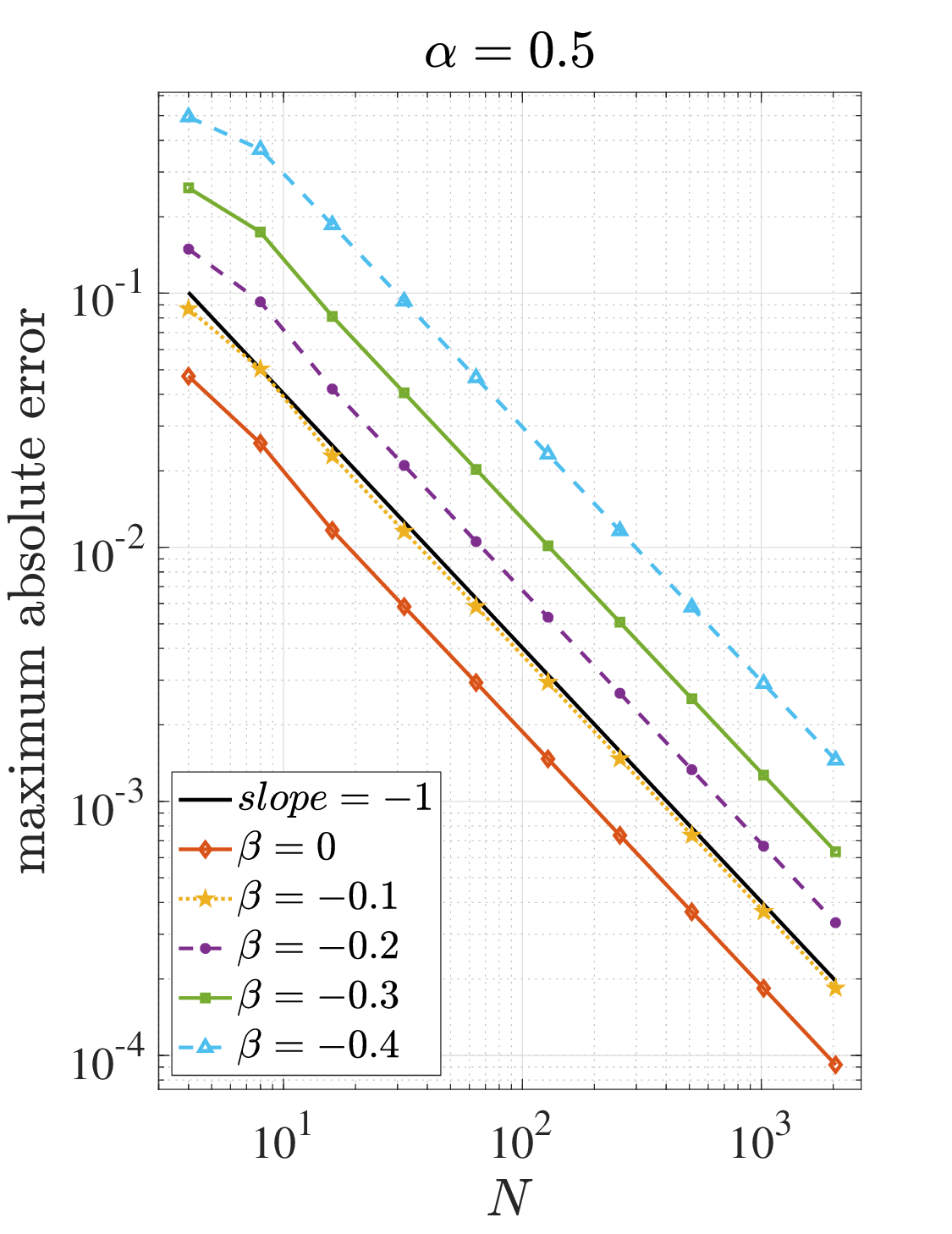}
 \includegraphics[height=6cm,width=6.8cm]{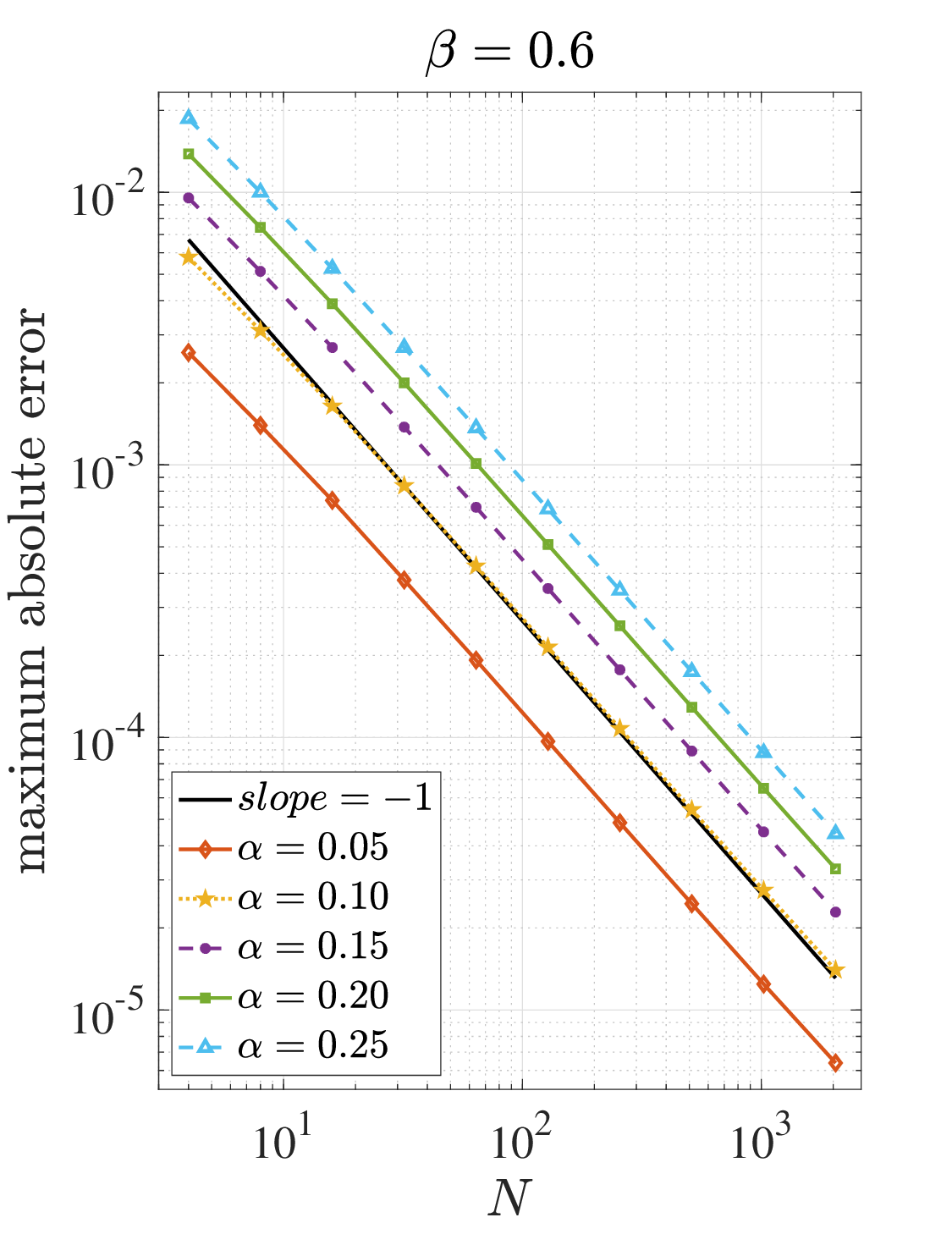}
 \caption{Maximum absolute error in the approximation of \eqref{fracint_exact}  with $\gamma=\frac{1}{\alpha+\beta}$.}
 \label{fig:fracint_order1}
\end{center}
    \end{figure}

\begin{figure}[H]
\centering
 \includegraphics[height=6cm,width=6.8cm]{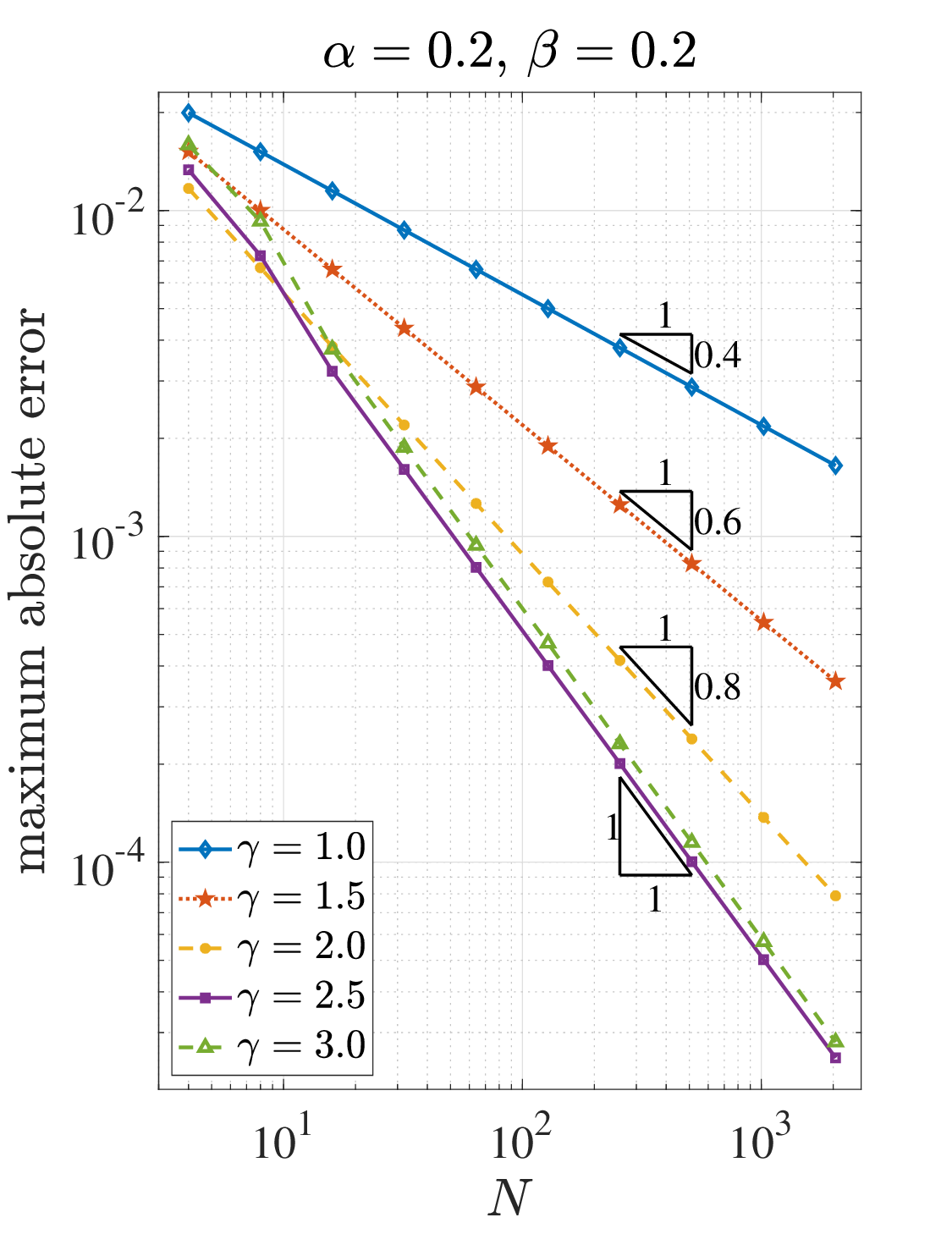}
 \includegraphics[height=6cm,width=6.8cm]{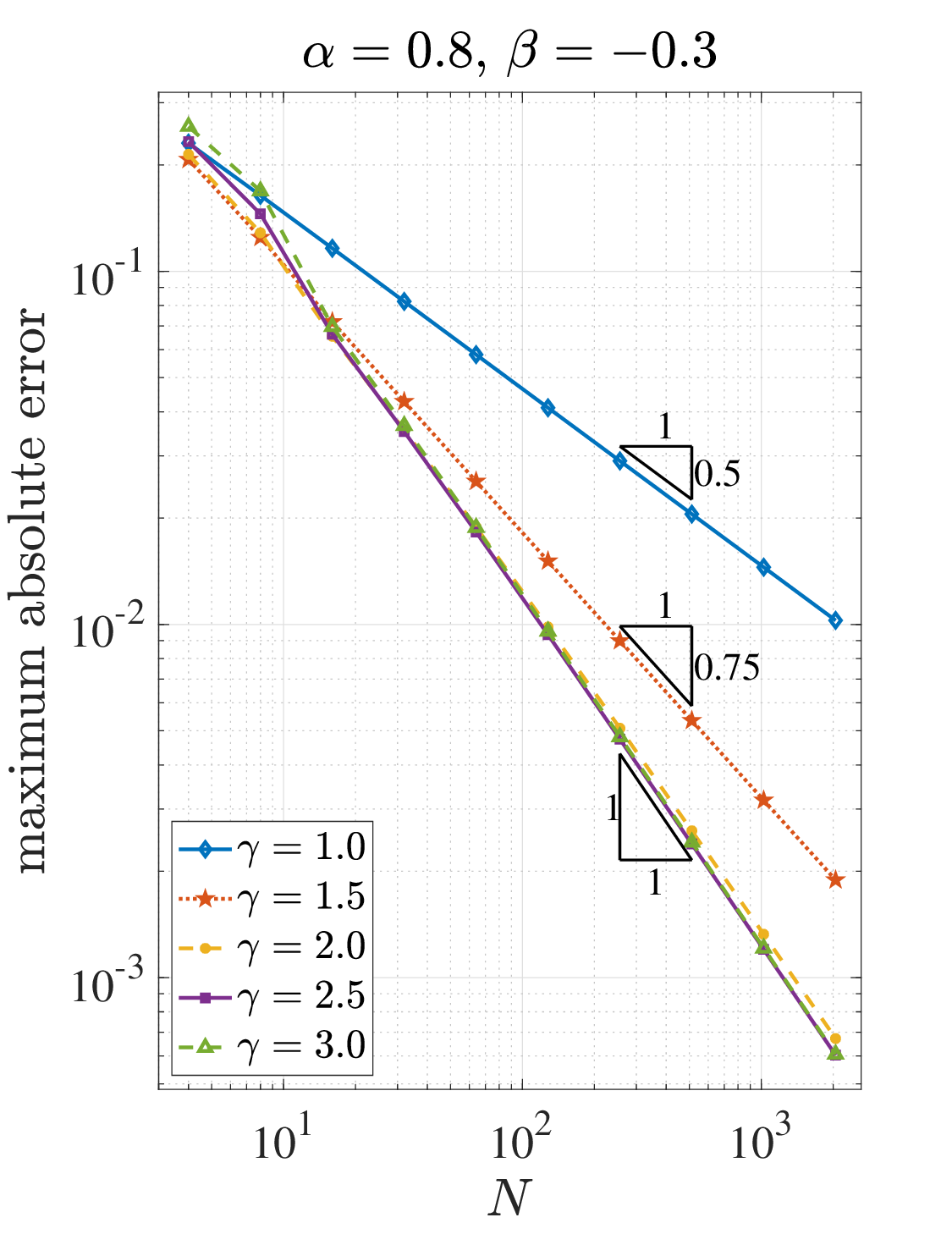}
\caption{Maximum absolute error in the approximation of \eqref{fracint_exact}.}
 \label{fig:fracint_order_gam}
    \end{figure}

\begin{figure}[H]
\centering
\includegraphics[height=6cm,width=6.8cm]{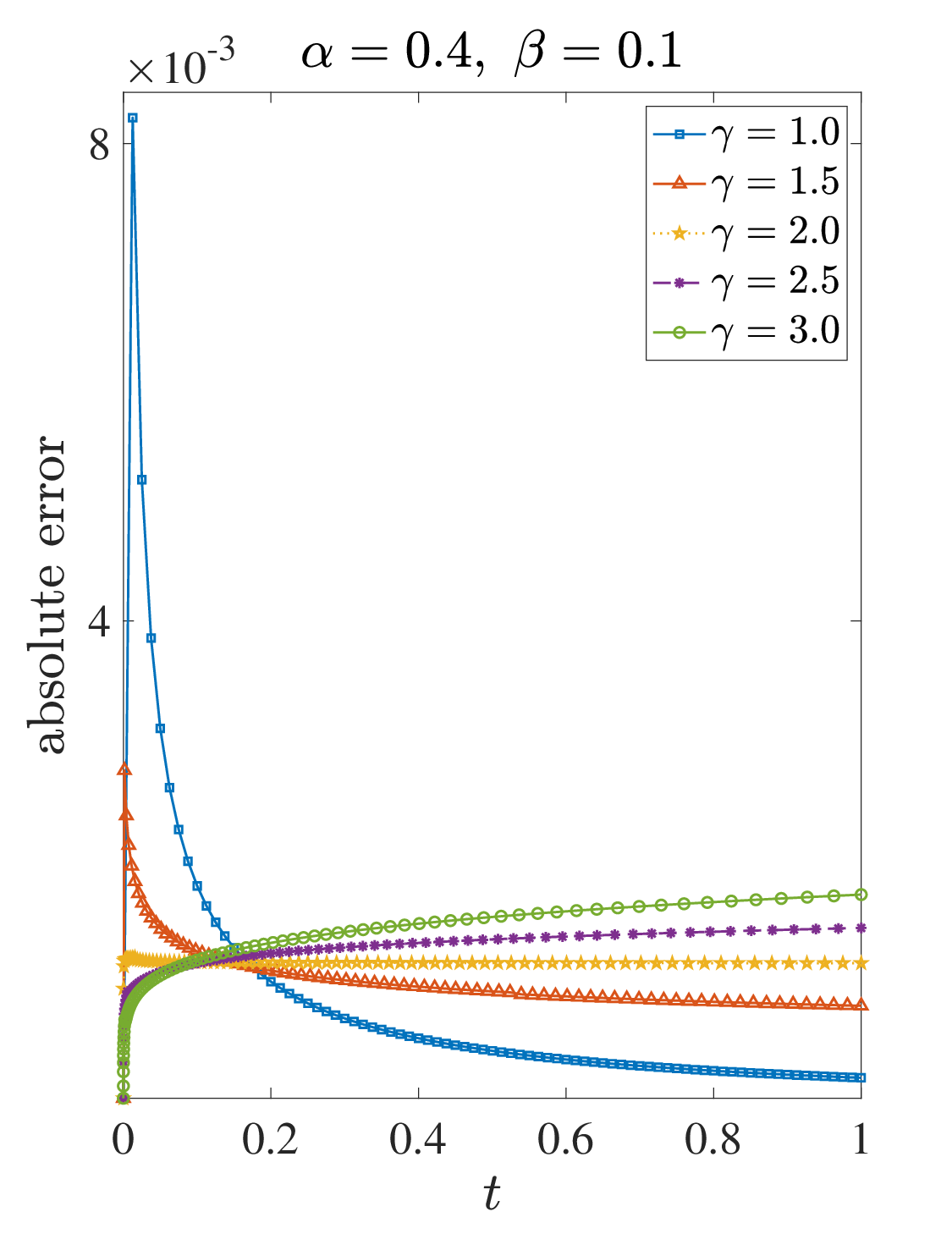}
\includegraphics[height=6cm,width=6.8cm]{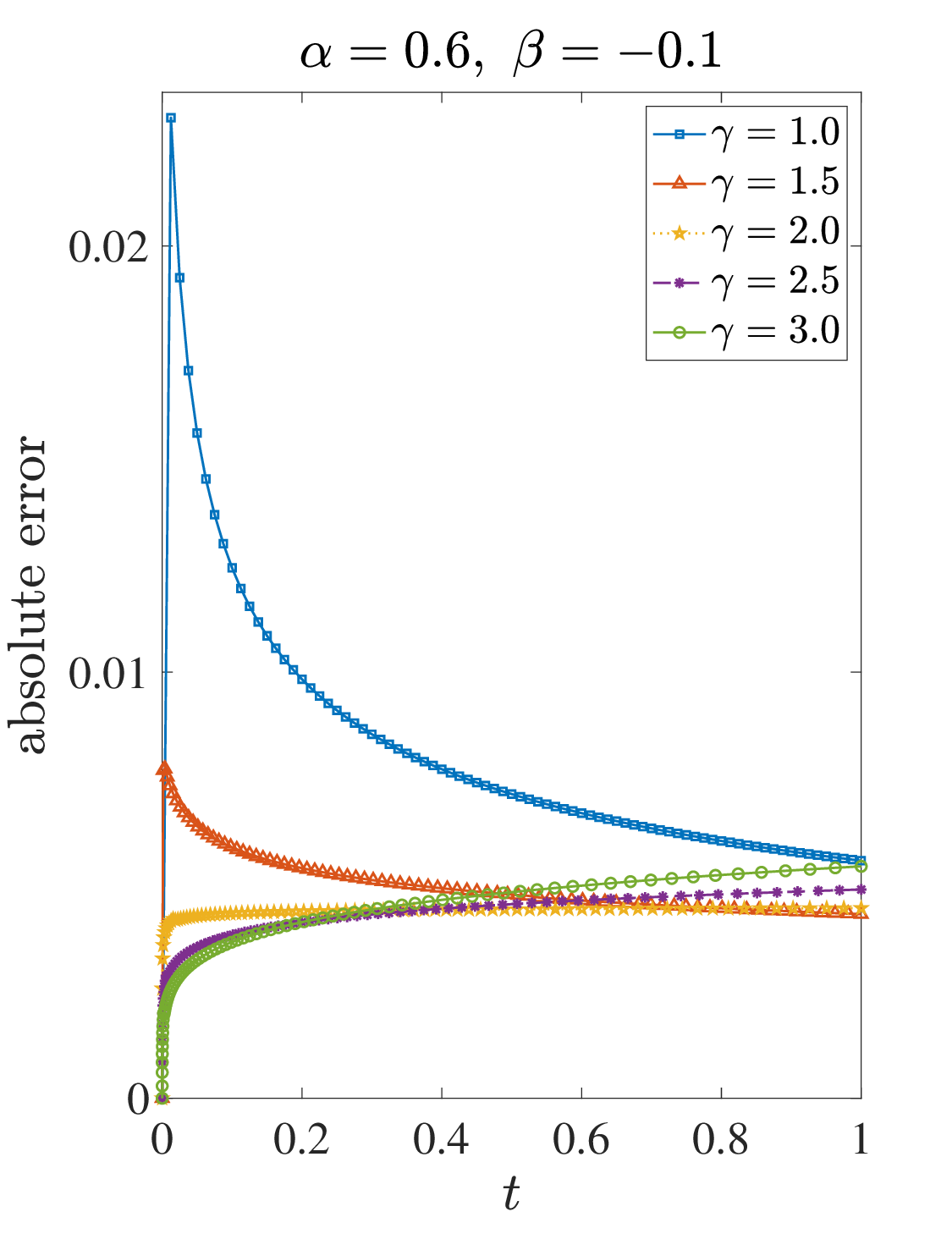}
\caption{Absolute error in the approximation of \eqref{fracint_exact}  with  $N=80$.}
\label{fig:fracint_error}
\end{figure}

\subsection{Fractional diffusion equations with non smooth data}\label{subsec:expFDE}

\begin{example}\label{ex_ODE}
We consider the	fractional ordinary diffusion equation
\begin{equation*}\label{sca_pro}
	D_t^\alpha u(t)+ u(t)=f(t),	\quad u(0)=0, \quad 0<t\leq
	1,
\end{equation*}
  with an exact solution
\begin{equation*}
	u(t)=t^\beta.
\end{equation*}
The corresponding source term is
\begin{equation*}
\label{source}
f(t)=\frac{\Gamma(\beta+1)}{\Gamma(\beta-\alpha+1)}t^{\beta-\alpha}+t^\beta.
\end{equation*}
\end{example}
The results in Table~\ref{tab:ODE_Quad} demonstrate that the number of quadrature points $N_Q^{his}$ increases slowly  with respect to $N$ on the graded mesh \eqref{gmesh}. In Figure~\ref{fig:ODEorder}, we present the maximum errors for different combinations of $\alpha$ and $\beta$, revealing the method's consistent full-order convergence of one on an optimal graded mesh, as discussed in Section \ref{sec:fracdiffeq}. Additionally, Figure~\ref{fig:ODEorder_gam} visually provides the convergence rate of gCQ method on various graded meshes, with the uniform mesh serving as a specific case that exhibits a convergence rate of $O(N^{-\beta})$.

\begin{table}[H]
\renewcommand{\captionfont}{\small}
	\centering
	\small
	\begin{spacing}{0.8}
		\caption{Value of $N_Q^{his}$ for Example~\ref{ex_ODE} with  $\alpha=0.5$,   $\tol=10^{-8}$.}
		\label{tab:ODE_Quad}
	\end{spacing}
	\begin{tabular}{p{1cm}<{\centering}p{1cm}<{\centering}p{1cm}<{\centering}p{1cm}<{\centering}p{1cm}<{\centering}p{1cm}<{\centering}p{1cm}<{\centering}p{1cm}<{\centering}<{\centering}p{1cm}<{\centering}p{1cm}<{\centering}}
	\hline \specialrule{0pt}{2pt}{2pt}
	$\gamma$\textbackslash$N$  & $16$ & $32$ & $64$ & $128$ & $256$ & $512$ & $1024$ &$2048$ & $4096$  \\ \specialrule{0pt}{1.5pt}{1.5pt}\hline		\specialrule{0pt}{2pt}{2pt}
2 & 33 & 50 & 62 & 72 & 83 & 91 & 98 & 104 & 113 \\                \specialrule{0pt}{1.5pt}{1.5pt}
4 & 62 & 107 & 137 & 165 & 191 & 216 & 240 & 264 & 288 \\       \specialrule{0pt}{1.5pt}{1.5pt}
6 & 91 & 168 & 227 & 277 & 325 & 373 & 421 & 468 & 516 \\     \specialrule{0pt}{1.5pt}{1.5pt}
8 & 119 & 239 & 326 & 404 & 480 & 557 & 634 & 715 & 798 \\  	 \specialrule{0pt}{1.5pt}{1.5pt}
10 & 148 & 314 & 436 & 549 & 660 & 773 & 887 & 1007 & 1130 \\ 	\specialrule{0pt}{2pt}{2pt}
	\hline
\end{tabular}
\end{table}
\begin{figure}[H]
\centering
 \includegraphics[height=6cm,width=6.8cm]{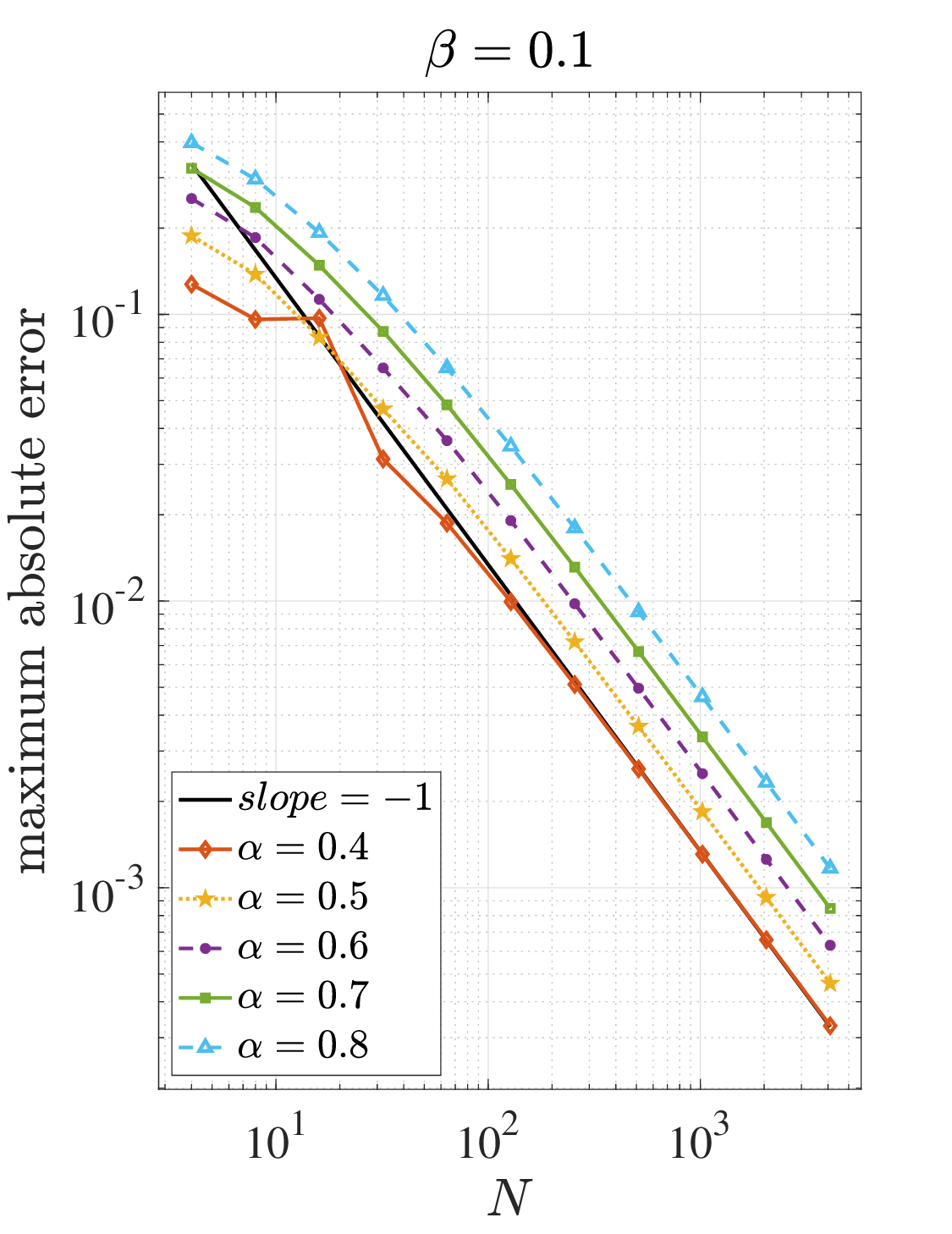}
 \includegraphics[height=6cm,width=6.8cm]{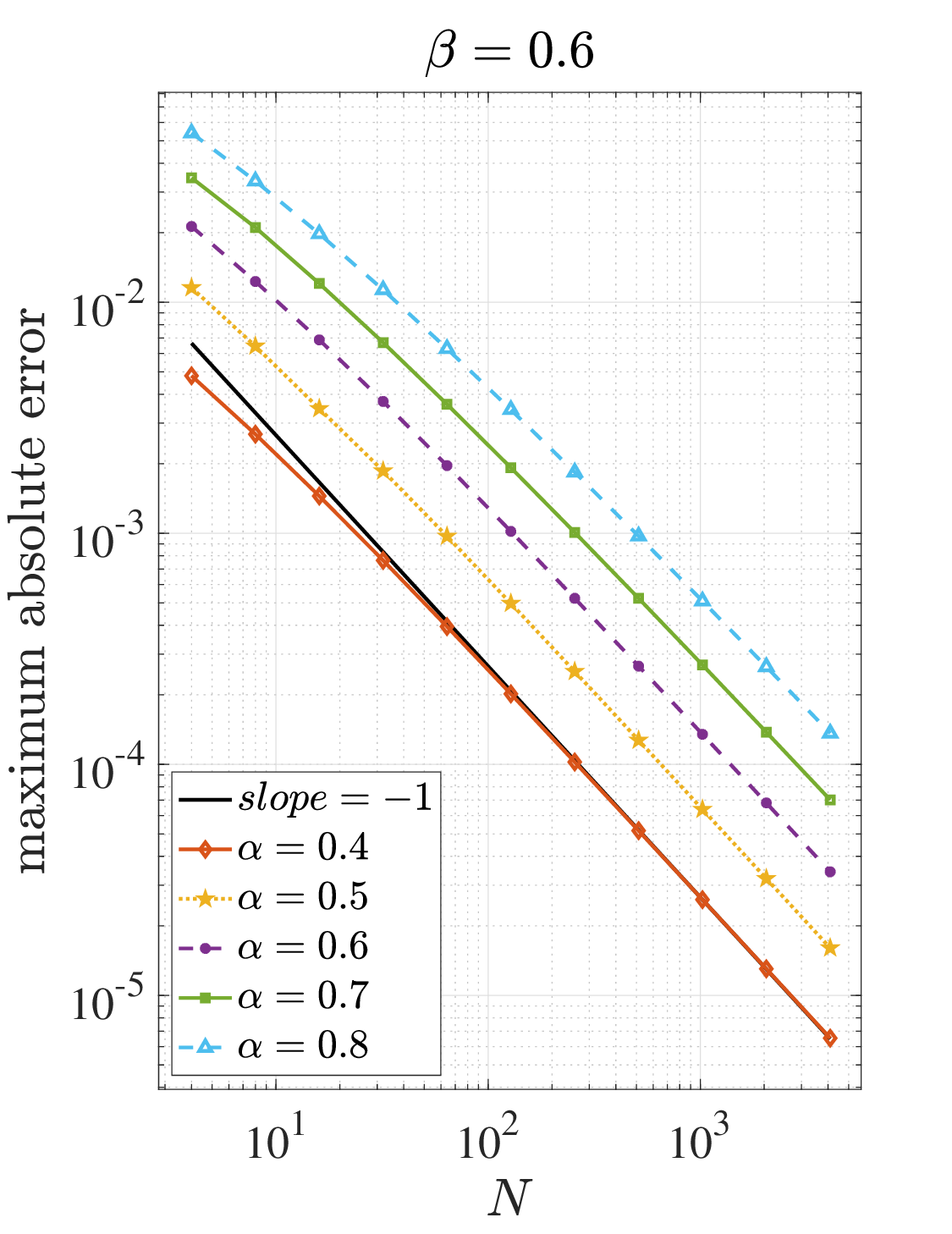}
 \caption{Maximum absolute errors for Example~\ref{ex_ODE} with $\gamma=\frac{1}{\beta}$.}
 \label{fig:ODEorder}
    \end{figure}
\begin{figure}[H]
\centering
 \includegraphics[height=6cm,width=6.8cm]{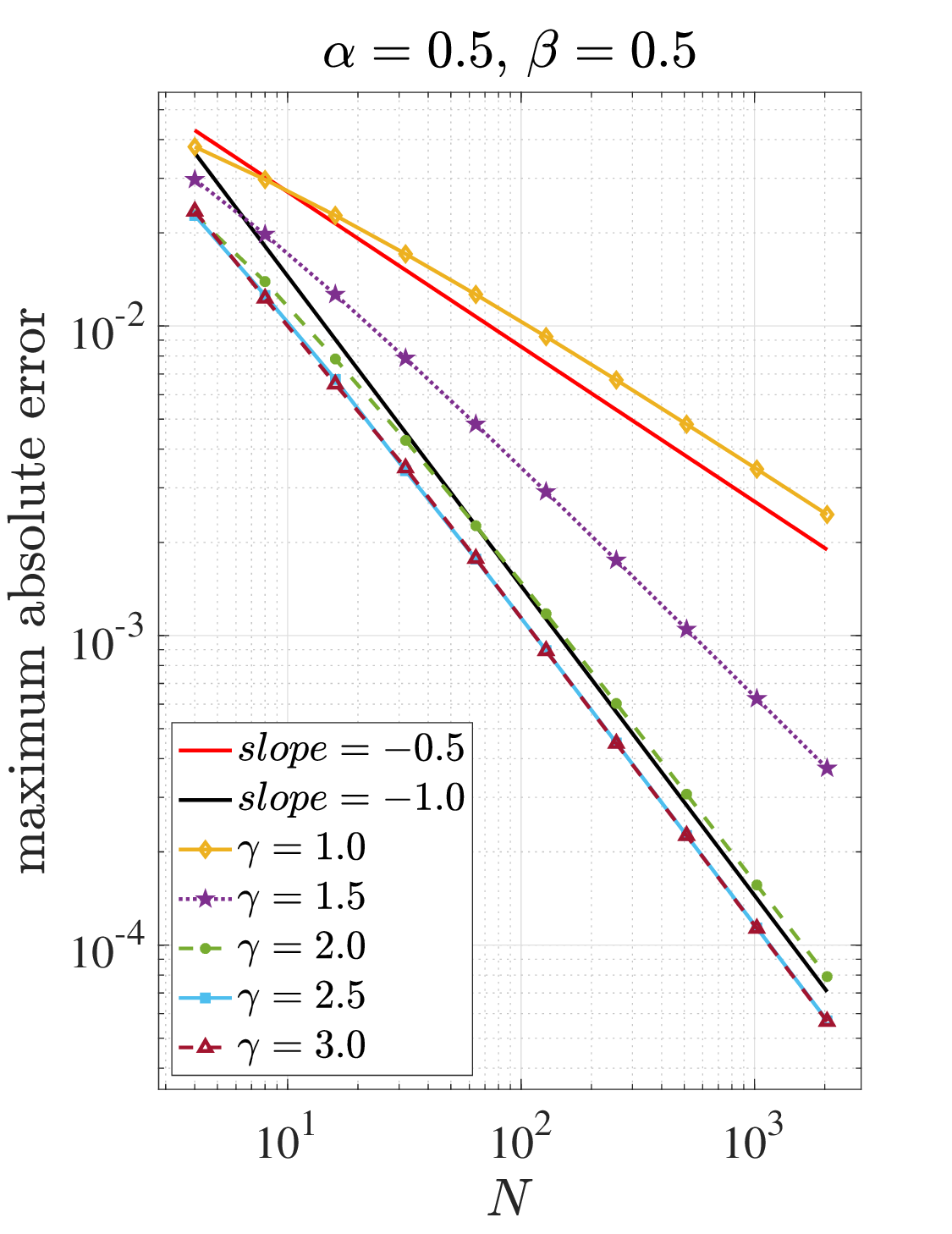}
 \includegraphics[height=6cm,width=6.8cm]{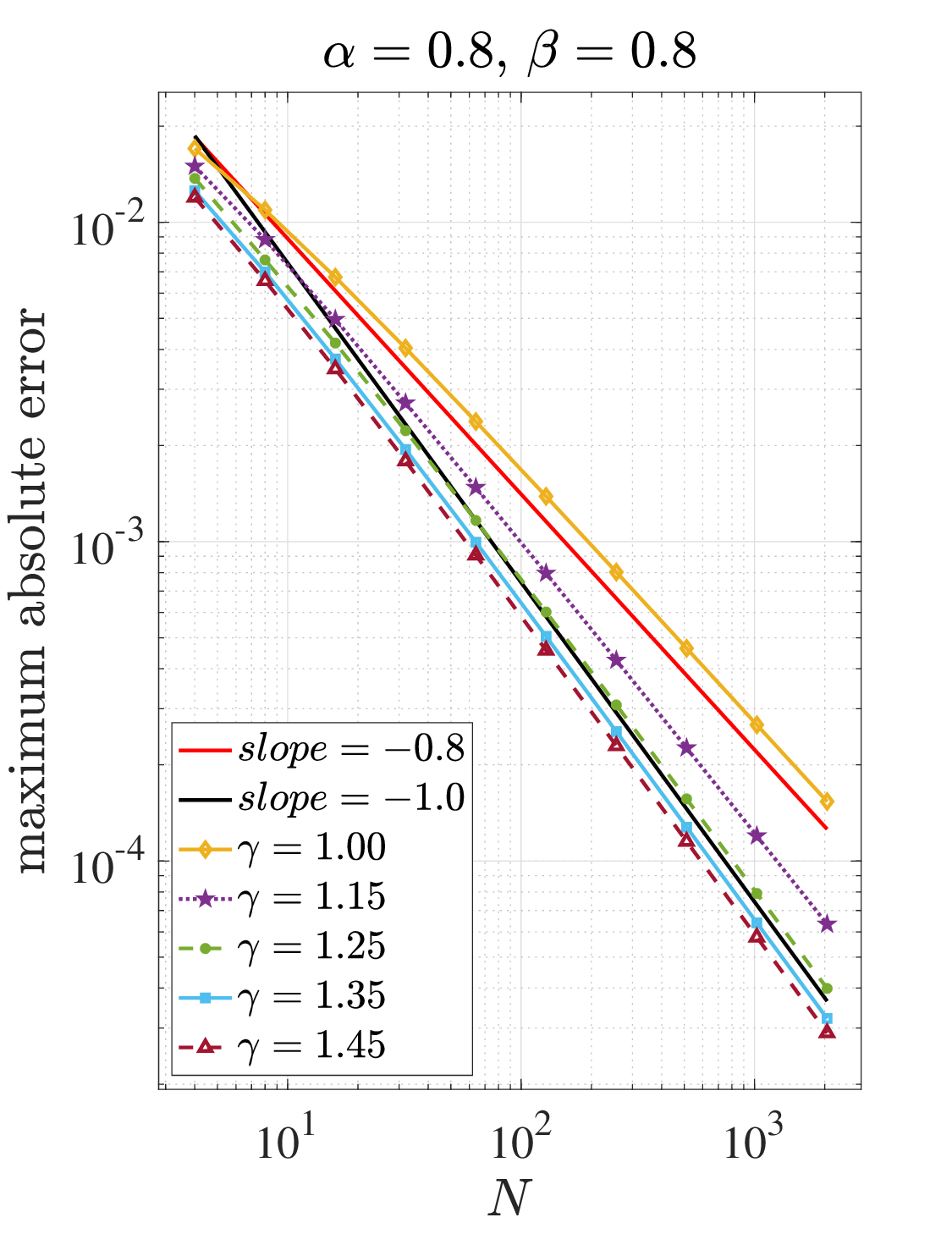}
  \caption{Maximum absolute errors for Example~\ref{ex_ODE} with different values of $\gamma$.}
 \label{fig:ODEorder_gam}
    \end{figure}

\begin{example}\label{ex_PDEsmooth}
	We consider here the following subdiffusion equation on a 1D domain $\Omega=(-1,1)$:
	\begin{equation}
		\left\{\begin{array}{lll}\label{PDE_smooth}
			D_t^\alpha  u-\Delta u=f(x,t),&\quad (x,t)\in \Omega\times(0,1],\\[.5em]
			u(x,0)=\cos(\pi x/2),&\quad x\in\Omega,\\[.5em]
			u(x,t)=0,&\quad (x,t)\in \partial \Omega\times(0,1].
		\end{array}\right.
	\end{equation}
	The exact solution is given by
	\begin{equation}
		\nonumber
		u(x,t)=(1+t^{\beta})\cos(\pi x/2),
	\end{equation}
	and the source term is
	\begin{equation*}
		f(x,t)=\left(\frac{\Gamma(\beta+1)}{\Gamma(\beta-\alpha+1)}t^{\beta-\alpha}+\pi^2(1+t^{\beta})/4\right)\cos(\pi x/2).
	\end{equation*}
\end{example}

Problem \eqref{PDE_smooth} fits the format in \eqref{fracdiffusion}  with $A=-\Delta$ and $u_0(x)=\cos(\pi x/2)$. The domain of $-\Delta$ is given by
\begin{equation}\label{D(A)}
	D(-\Delta)=H_0^1(\Omega)\cap H^2(\Omega)
\end{equation}
and $u_0(x)\in D(-\Delta)$. Then, as discussed in Section~\ref{sec:fracdiffeq}, we can solve the equivalent problem \eqref{homo_fracdiffusion}.

We first discretize \eqref{homo_fracdiffusion} in space by applying the finite element method with piecewise linear basis functions and mesh-width  $\Delta_x$. The resulting discrete Laplacian operator is well known to satisfy the hypotheses of Theorem~\ref{thm:k(t)}. Then, according to the regularity of $f-\Delta u(x,0)$, the convergence order of the gCQ method is one on graded meshes \eqref{gmesh} with $\gamma \ge \frac{1}{\beta}$. In this way we compute the numerical solution $v_{\Delta_x}^n$ of \eqref{homo_fracdiffusion}, and so $u_{\Delta_x}^n=v_{\Delta_x}^n+u(x,0)$. In our experiment, we measure the maximum $L^2$-norm error, which  is defined as
\[\max\limits_{1\leq n\leq N} \Vert u(t_n)-u_{\Delta_x}^n\Vert_{L^2}.\]
In Figure~\ref{fig:PDE_order} we show the expected result for $\gamma=\frac{1}{\beta}$, for different values of $\beta$. We also show in Table~\ref{tab:PDE_Quad} the number of quadrature nodes $N_Q^{his}$ used to approximate  \eqref{homo_gCQ}. In Figure~\ref{fig:PDE_error} we display the error profiles for $\alpha=\beta=0.5$, for both the uniform mesh and the optimal graded one, with $\gamma=2$.  We can see that the  gCQ method on the nonuniform mesh is more accurate near $t=0$.

\begin{table}[H]
	\renewcommand{\captionfont}{\small}
	\centering
	\small
	\begin{spacing}{0.8}
		\caption{Value of $N_Q^{his}$ for Example~\ref{ex_PDEsmooth} with  $\beta=0.2$,  $\gamma=\frac{1}{\beta}$, $\tol=10^{-8}$.}
		\label{tab:PDE_Quad}
	\end{spacing}
	\begin{tabular}{p{1cm}<{\centering}p{1cm}<{\centering}p{1cm}<{\centering}p{1cm}<{\centering}p{1cm}<{\centering}p{1cm}<{\centering}p{1cm}<{\centering}p{1cm}<{\centering}<{\centering}p{1cm}<{\centering}p{1cm}<{\centering}}
		\hline \specialrule{0pt}{2pt}{2pt}
		$\alpha$\textbackslash$N$ & $16$ & $32$ & $64$ & $
		128$ & $256$ & $512$ & $1024$ & $2048$& $4096$   \\ \specialrule{0pt}{1.5pt}{1.5pt}\hline		\specialrule{0pt}{2pt}{2pt}
		0.3 & 90 & 155 & 200 & 238 & 273 & 308 & 340 & 369 & 398 \\
		\specialrule{0pt}{1.5pt}{1.5pt}
		0.4 & 95 & 162 & 212 & 251 & 291 & 328 & 364 & 398 & 436 \\
		\specialrule{0pt}{1.5pt}{1.5pt}
		0.5 & 98 & 167 & 220 & 262 & 306 & 346 & 389 & 429 & 470 \\
	\specialrule{0pt}{1.5pt}{1.5pt}
		0.6 & 101 & 176 & 227 & 276 & 319 & 368 & 411 & 455 & 501 \\
		\specialrule{0pt}{1.5pt}{1.5pt}
		0.7 & 106 & 180 & 234 & 285 & 334 & 382 & 430 & 484 & 534 \\
		\specialrule{0pt}{2pt}{2pt} \hline
	\end{tabular}
\end{table}

\begin{figure}[H]
	\centering
	\includegraphics[height=6cm,width=6.8cm]{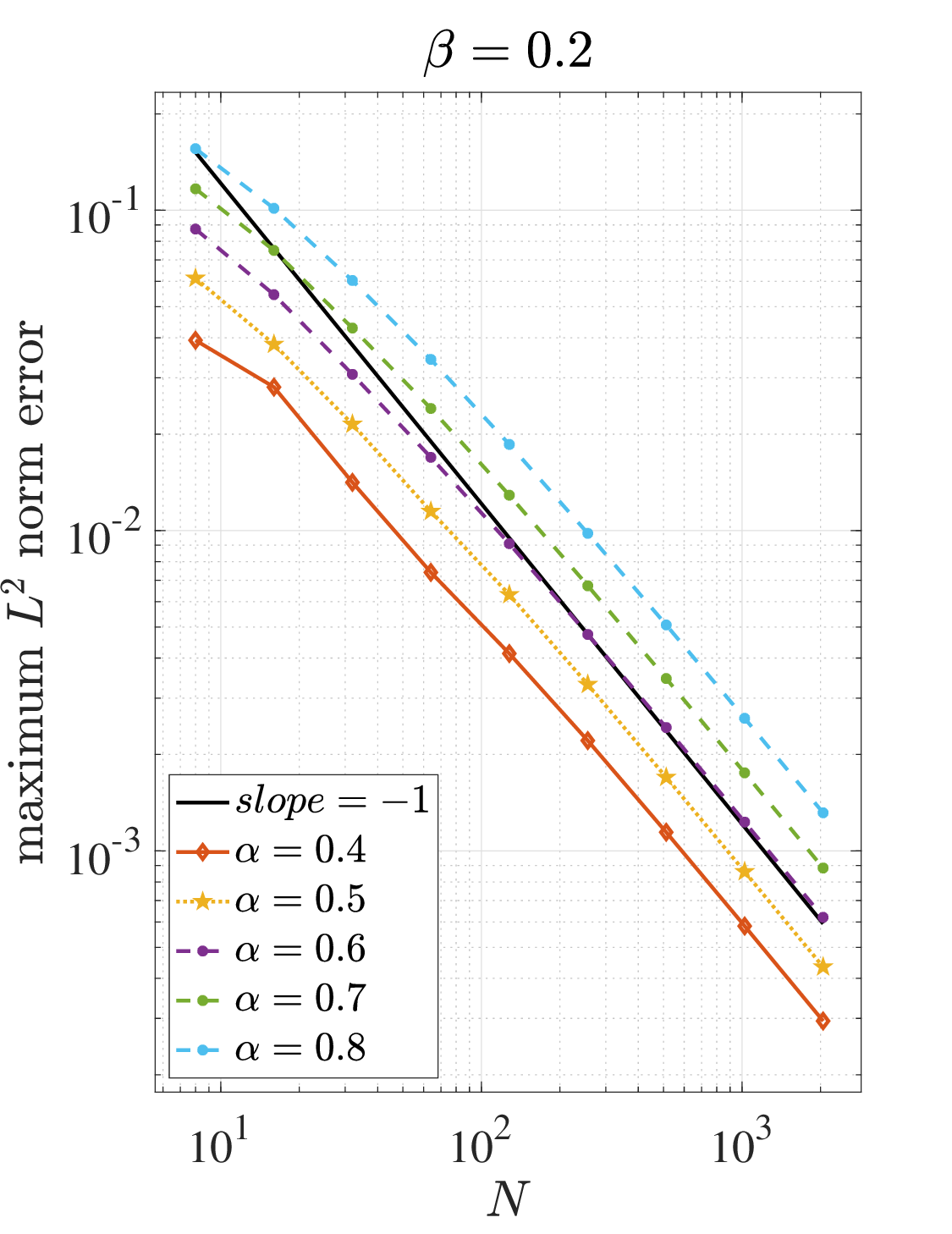}
	\includegraphics[height=6cm,width=6.8cm]{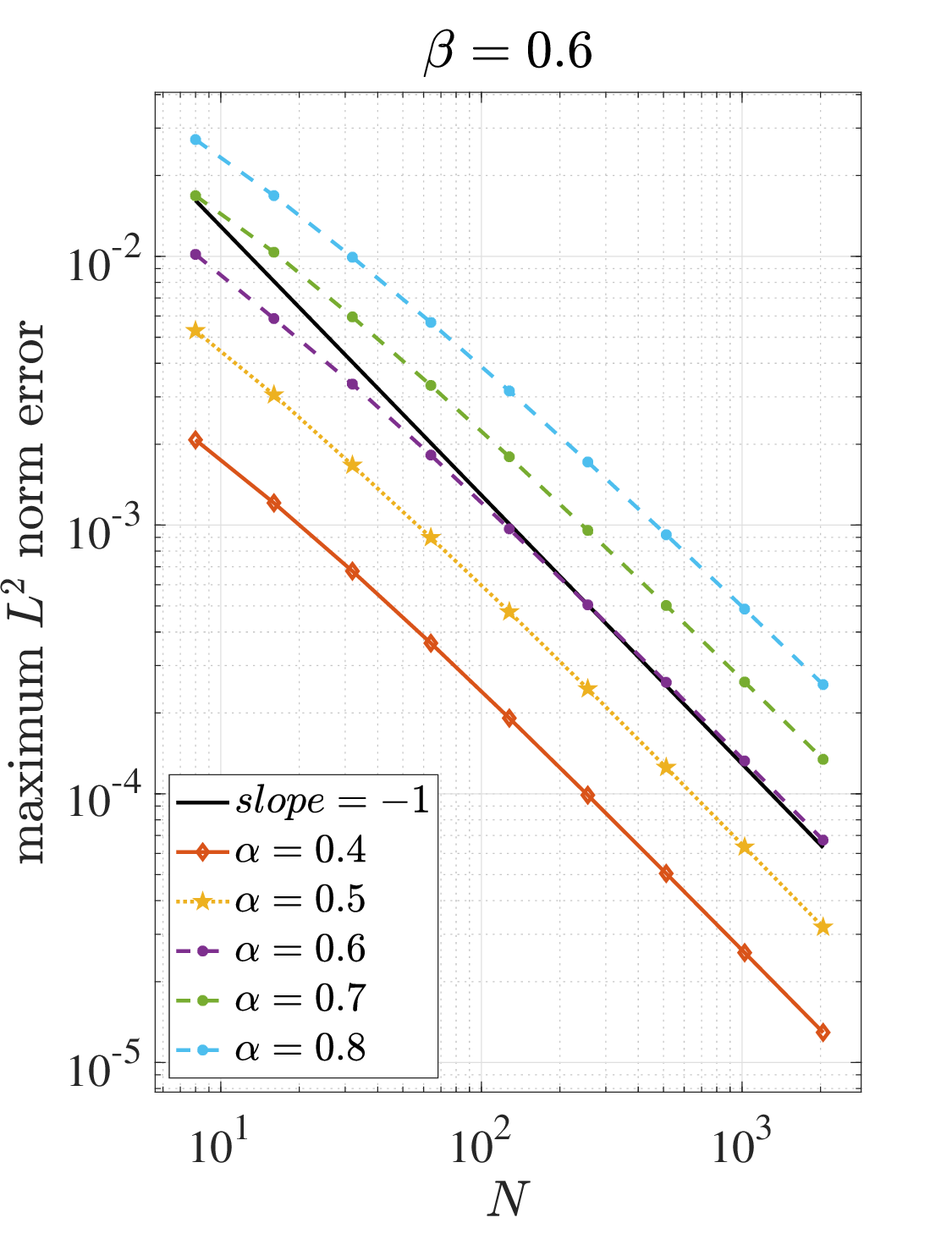}
	\caption{Maximum $L^2$ norm error for Example \ref{ex_PDEsmooth} with $\gamma=\frac{1}{\beta}$, $\Delta_x=\frac{1}{3000}$.}
	\label{fig:PDE_order}
\end{figure}

\begin{figure}[H]
	\centering
	\includegraphics[height=6cm,width=5.8cm]{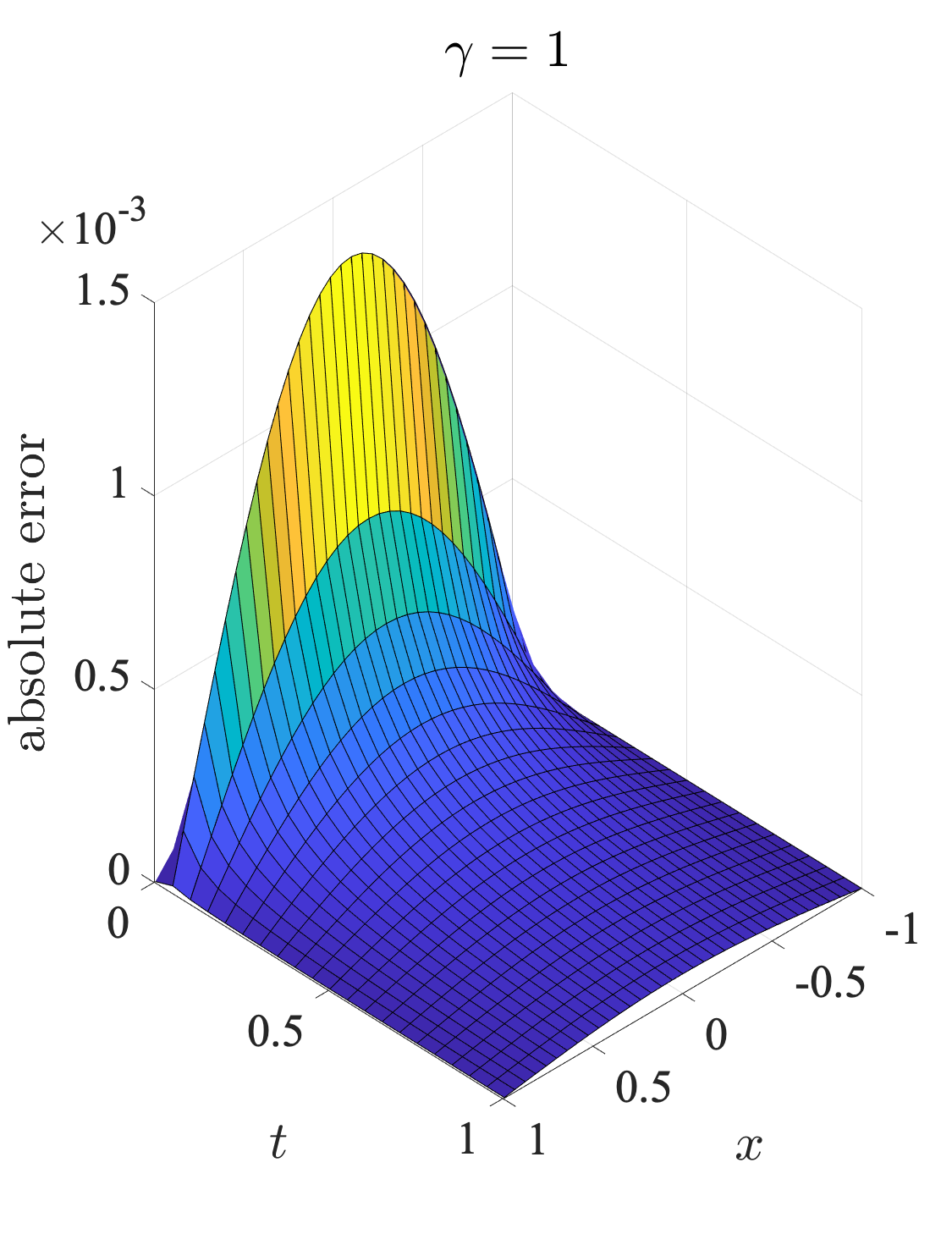}\quad
	\includegraphics[height=6cm,width=5.8cm]{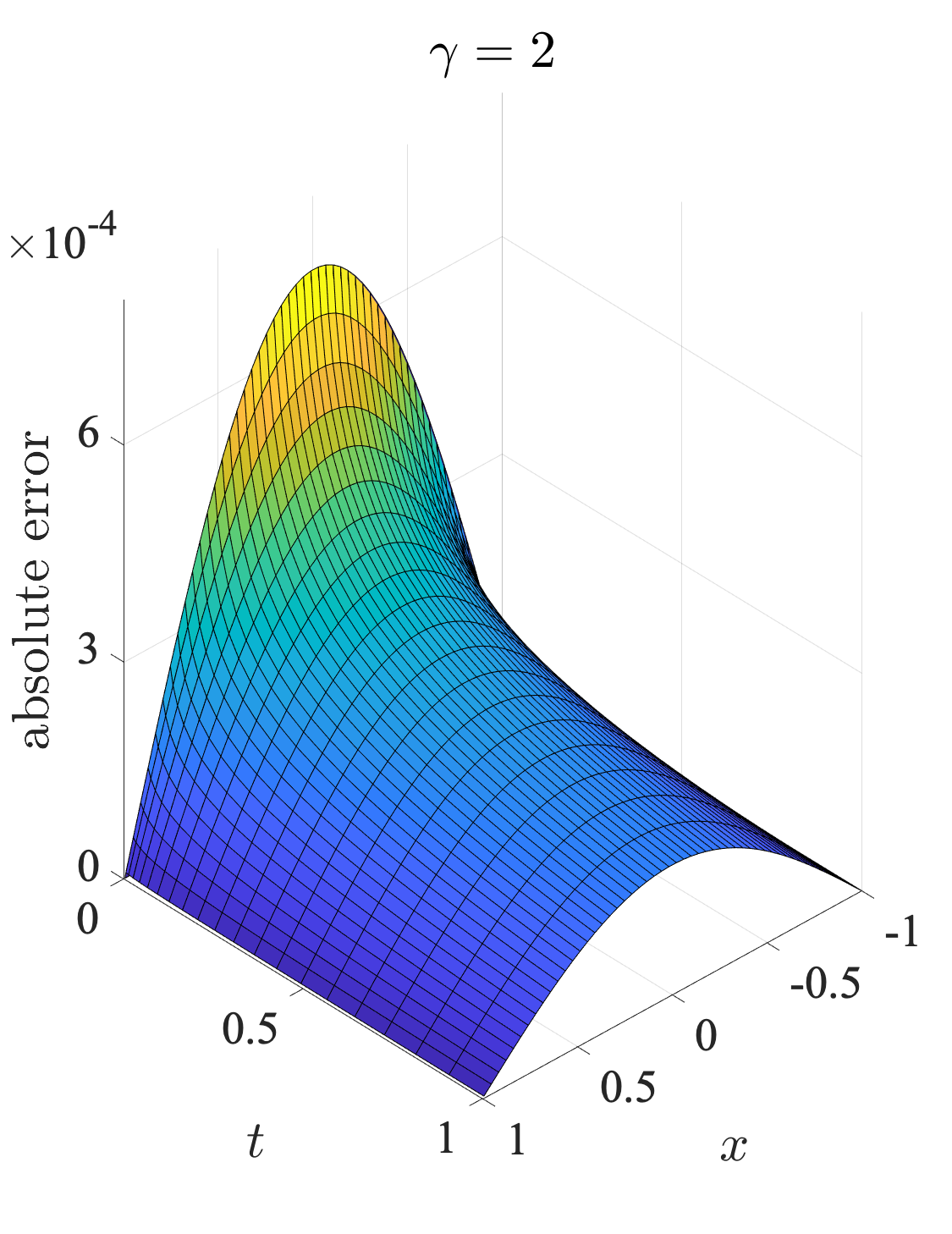}
	\caption{Absolute error  for Example \ref{ex_PDEsmooth} with $\alpha=\beta=0.5$, $\Delta_x=\frac{1}{512}$, $N=200$.}\label{fig:PDE_error}
\end{figure}

\begin{example}\label{ex_ODE_heavif}
	In this example, we consider
	\begin{equation*}
		D_t^\alpha u(t)+ u(t)=f(t),	\quad u(0)=1, \quad 0<t\leq
		1,
	\end{equation*}
	with an exact solution
	\begin{equation*}
		u(t)=1+t^{\beta_1}+H(t-r)(t-r)^{\beta_2},\quad t\ge0,\quad r\ge 0.
	\end{equation*}
 The corresponding source term can be given by
	\begin{equation*}
		f(t)=u(t)+\frac{\Gamma(\beta_1+1)}{\Gamma(\beta_1-\alpha+1)}t^{\beta_1-\alpha}+\frac{\Gamma(\beta_2+1)}{\Gamma(\beta_2-\alpha+1)}H(t-r)(t-r)^{\beta_2-\alpha},
	\end{equation*}
which is piecewise smooth.
\end{example}
For this example, we employ a graded mesh with higher density near the singularities $t=0$ and $t=r$, which is defined as follows
\begin{align}
	\nonumber
	N_1=\lfloor N r/T\rfloor,\quad &\lv(n)=n^{\gamma_1-1}(N_1-n+1)^{\gamma_2-1}N_1^{-\gamma_1-\gamma_2+1},\quad
	\tau(n)=\frac{r}{\Vert \lv\Vert _{l^1}}\lv, \quad 1\le n\le N_1,\\[.5em]
	\nonumber
	&t(0)=0,\quad t(n)=t(n-1)+\tau(n),\quad  1\le n\le N_1,\\[.5em]
	\nonumber
	&t(k)=t(N_1+1)+(T-r)(\frac{k-N_1}{N-N_1})^{\gamma_2}\quad N_1+1\le k\le N,\quad t(N)=T.
\end{align}
On the left-hand side of Figures~\ref{fig:ODEHeavifr028order} and \ref{fig:ODEHeavifr072order}, we depict the structure of the  mesh described above for different values of $r$, $\gamma_1$ and $\gamma_2$ with $N=1024$. The error plots on the right-hand side of Figures~\ref{fig:ODEHeavifr028order} and \ref{fig:ODEHeavifr072order} illustrate that the gCQ method is capable of handling problems with multiple singularities and demonstrates superior performance compared to the uniform method.
\begin{figure}[H]
	\centering
	\includegraphics[height=6cm,width=6.8cm]{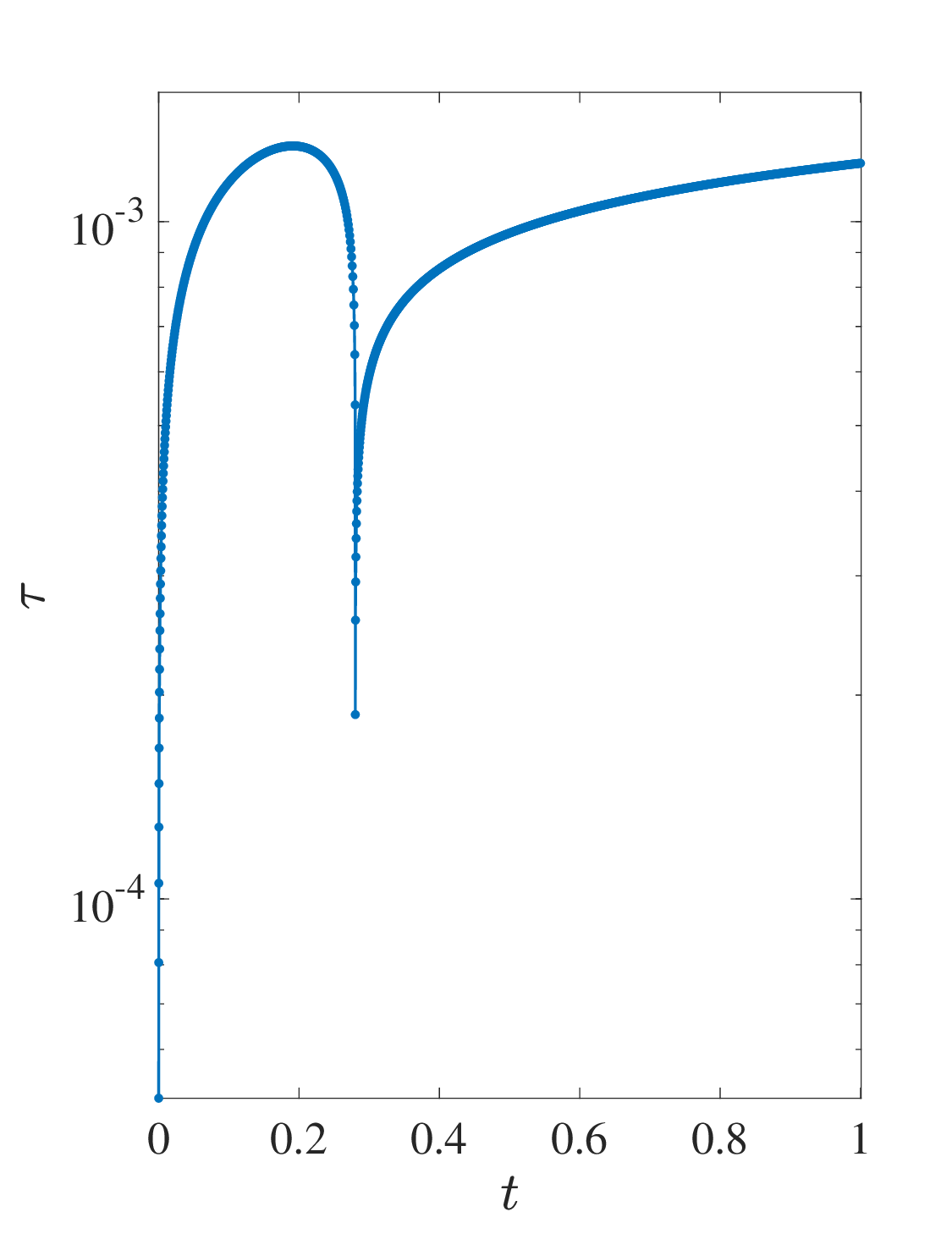}
	\includegraphics[height=6cm,width=6.8cm]{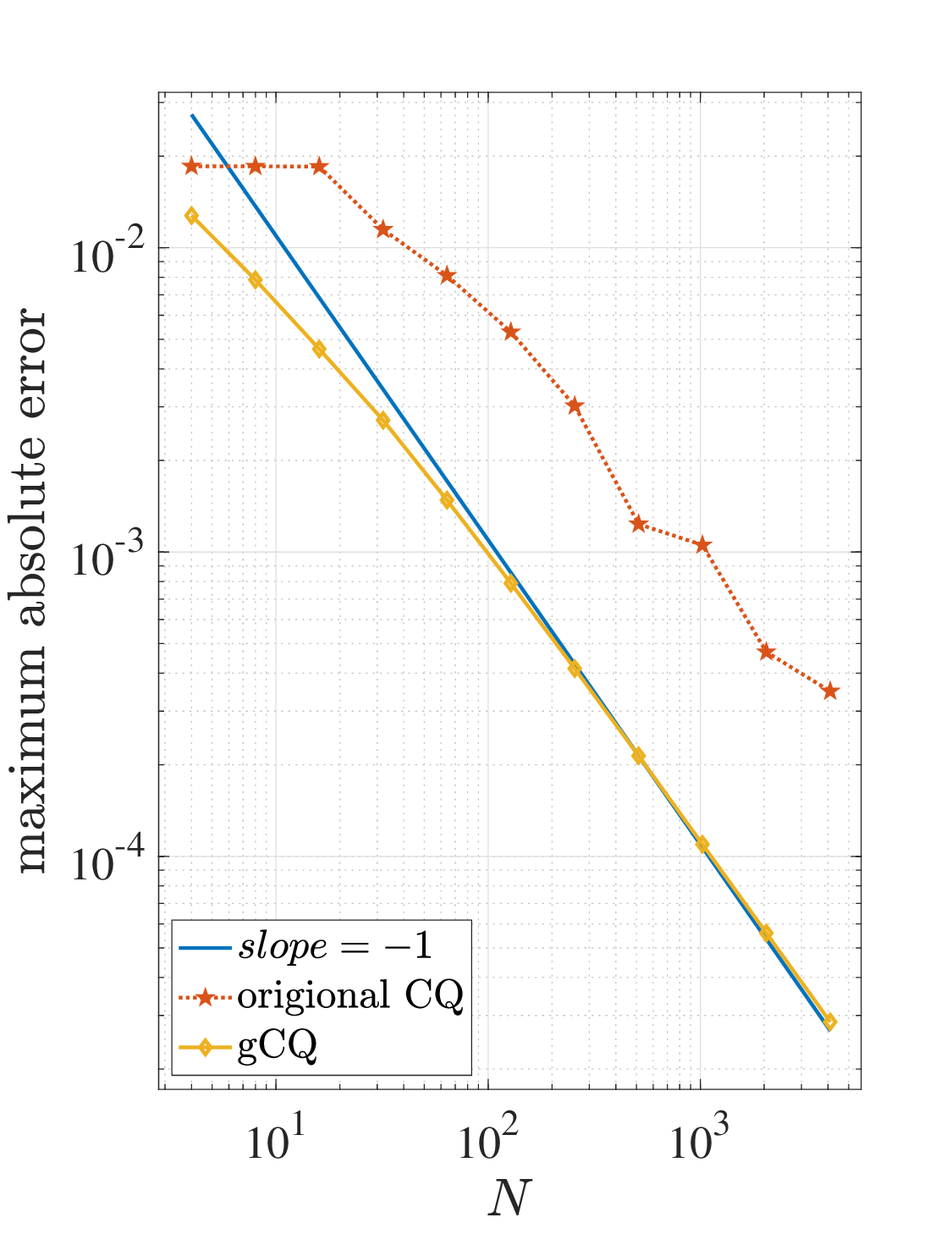}
\caption{Adaptive mesh used in the implementation of gCQ method with $N=1024$ (Left) and maximum absolute error (Right) for Example~\ref{ex_ODE_heavif} with $\alpha=0.4$, $\beta_1=0.6$, $\beta_2=0.8$,  $r=0.28$, $\gamma_1=1/\beta_1$, $\gamma_2=1/\beta_2$.} \label{fig:ODEHeavifr028order}
\end{figure}
\begin{figure}[H]
	\centering
	\includegraphics[height=6cm,width=6.8cm]{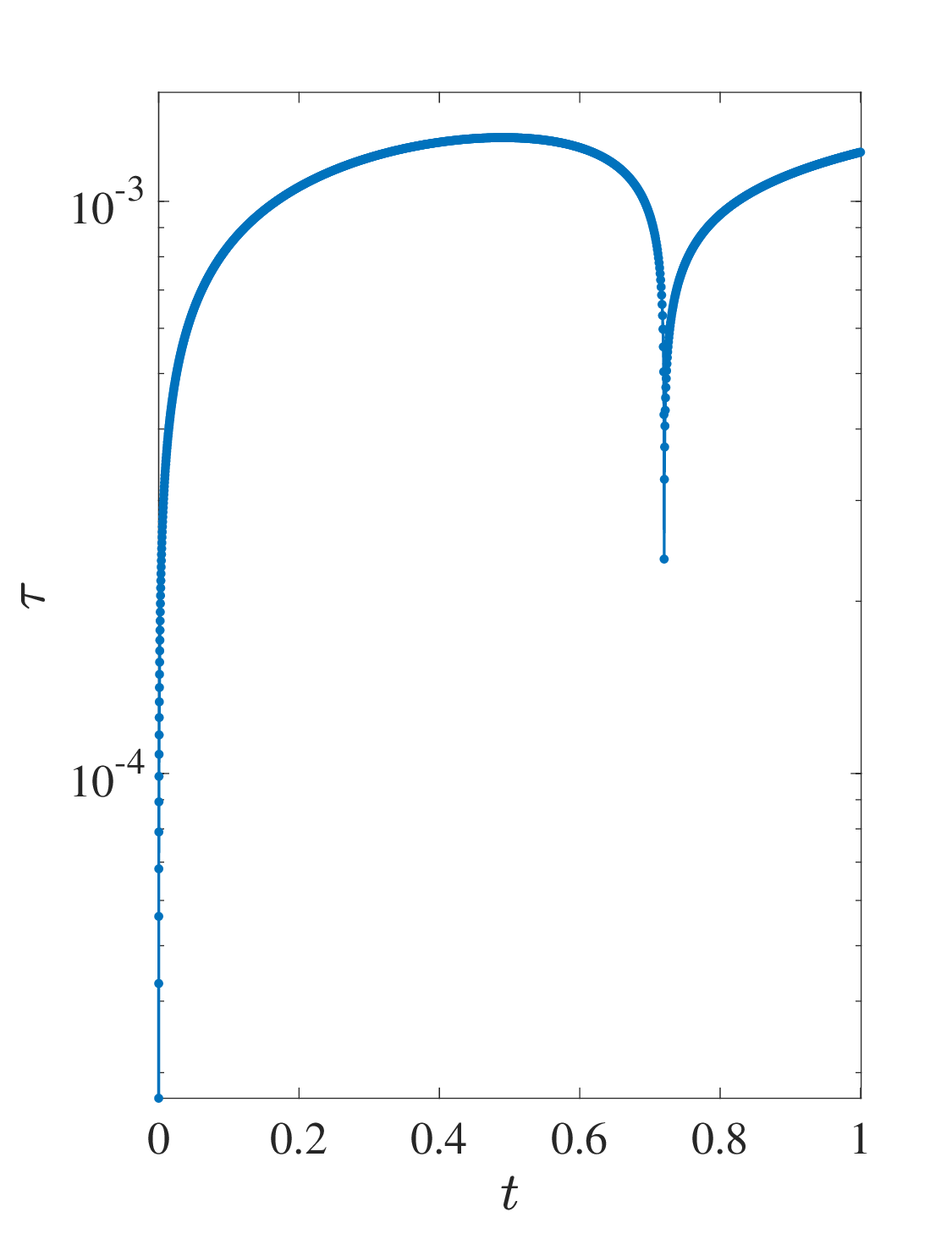}
	\includegraphics[height=6cm,width=6.8cm]{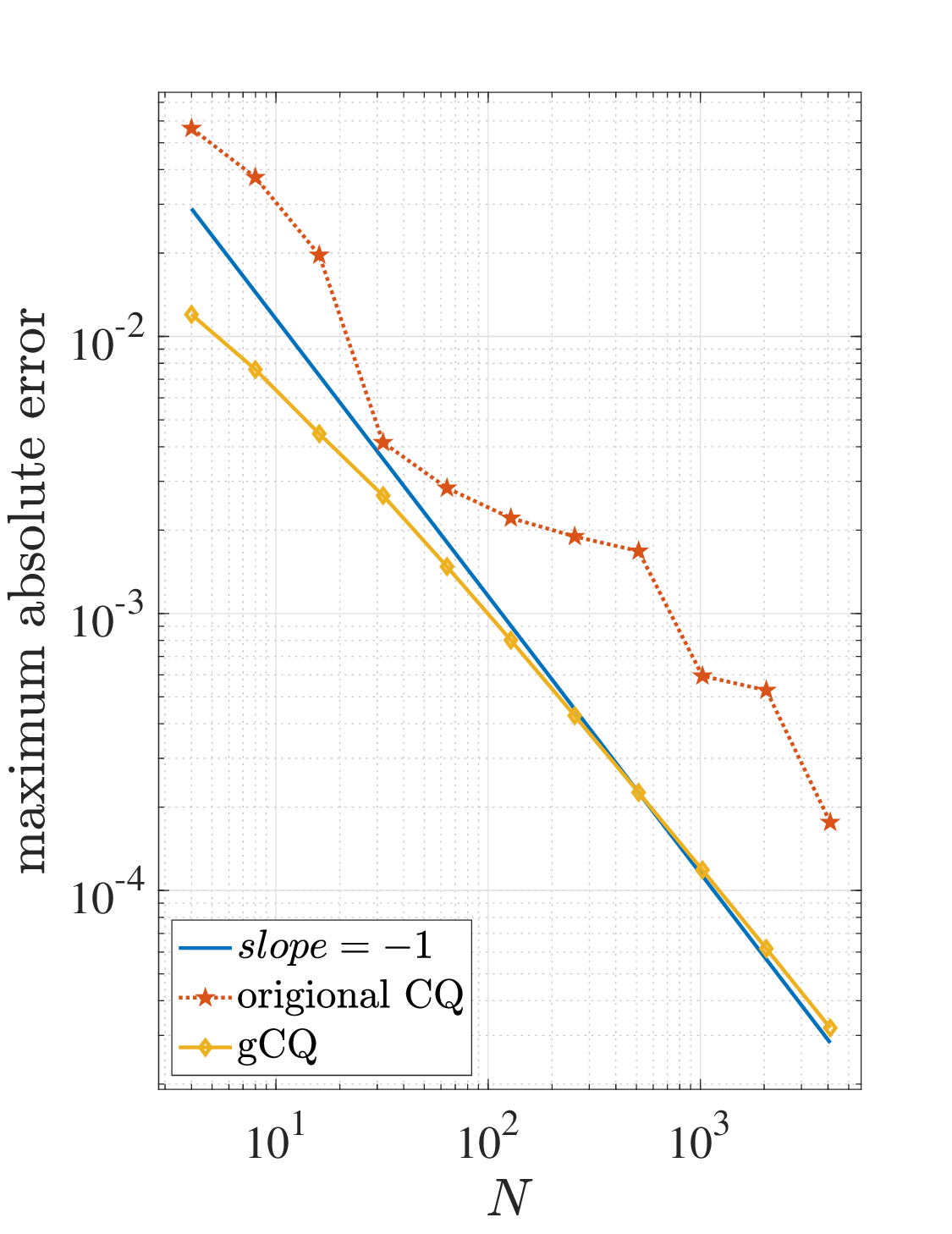}
	\caption{Adaptive mesh used in the implementation of gCQ method with $N=1024$ (Left) and maximum absolute error (Right) for Example~\ref{ex_ODE_heavif} with $\alpha=0.4$, $\beta_1=0.6$, $\beta_2=0.8$,  $r=0.72$, $\gamma_1=1/\beta_1$, $\gamma_2=1/\beta_2$.} \label{fig:ODEHeavifr072order}
\end{figure}

\begin{example}\label{ex_PDE_u01} In this example, we consider the same equation as in Example~\ref{ex_PDEsmooth}, but with $u_0(x)\notin D(A)$, namely.
\begin{equation}
\left\{\begin{array}{lll}\label{PDE_nons}
D_t^\alpha u-\Delta u=f(x,t),&\quad (x,t)\in \Omega\times(0,1],\\[.5em]
	u(x,0)=1,&\quad x\in \overline\Omega,\\[.5em]
	u(x,t)=0,&\quad (x,t)\in \partial \Omega\times(0,1].
\end{array}\right.
\end{equation}
in $\Omega=(-1,1)$ and with
\begin{equation*}
f(x,t)=\left(\frac{\Gamma(\beta+1)}{\Gamma(\beta-\alpha+1)}t^{\beta-\alpha}+\pi^2(1+t^{\beta})/4\right)\cos(\pi x/2).
\end{equation*}
\end{example}
As before we apply the linear finite element for the discretization in space of the problem. Then we approximate in time \eqref{fracdiffusion} with $A$ the resulting discrete Laplacian.

We apply the inversion method of the Laplace transform in \cite{LoPaScha} for the approximation of the term $E(t) u_0$ in  \eqref{fracdiff_exasol}. This gives
\begin{equation}\label{ILT_qua}
\mathcal{L}^{-1}[z^{\alpha-1}(z^\alpha I+A)^{-1}u_0](t_n) \approx\sum\limits_{l=-J}^{J}\omega_l(t_n)\e^{t_n z_l(t_n)}z^{\alpha-1}_l(t_n)(z^{\alpha}_l(t_n)I+A)^{-1}u_0,
\end{equation}
where
\[ \omega_l(t_n)=-\frac{h}{2\pi i}\varphi'(lh),\quad z_l(t_n)=\varphi(lh), \]
with
\begin{align*}
	&\varphi(s)=\mu(t_n)(1-\sin(\sigma+is)),\quad h=\frac{a(\theta)}{J},\quad\mu(t_n)=\frac{2\pi d J(1-\theta)}{ t_n a(\theta)},\\
		&a(\theta)={\rm{arccosh}}\left(\frac{1}{(1-\theta)\sin(\sigma)}\right),\quad \theta=1-\frac{1}{J},\quad d\in(0,\frac{\pi}{2}-\sigma),
\end{align*}
for $1\le n\le N$, with $\sigma=\frac{\pi}{4}$, $d=\frac{\pi}{6}$ and $J=50$. Notice that we use a high number of quadrature points per time point, which is certainly not necessary, since the method in \cite{LoPaScha} is able to approximate the inverse Laplace transform with the same quadrature uniformly on time windows of the form $[t_0, \Lambda t_0]$, with $\Lambda \gg 1$. In this way we are approximating the term $E(t)u_0$ with machine precision and observe only the error induced by the gCQ approximation of the convolution term in \eqref{fracdiff_exasol}.

The error in this example is measured with respect to a reference solution computed with double the time points, this is
\[
\max_{1<n<N}\Vert u_{\Delta_x}^n-\widetilde {u}_{\Delta_x}^{2n}\Vert_{L^2},\]
where  $\widetilde{u}^{2n}_{\Delta_x}$ is the  corresponding numerical solution on the finer time mesh
 \[ t_n=(n/(2N))^\gamma,  \qquad 0\leq n\leq 2N.\]
The optimal grading parameter  in \eqref{gmesh}  for this example is $\gamma=\max\left(1/\alpha,1/\beta\right)$.  And  the  values of $N_Q^{his}$ used in the quadrature formula \eqref{Ihis_quad} on different graded meshes \eqref{gmesh} are listed in Table~\ref{tab:PDEu01_Quad}.
\begin{table}[H]
\renewcommand{\captionfont}{\small}
	\centering
	\small
	\begin{spacing}{0.8}
		\caption{Value of $N_Q^{his}$  for Example~\ref{ex_PDE_u01} with  $\alpha=0.6$,    $\tol=10^{-8}$.}
		\label{tab:PDEu01_Quad}
	\end{spacing}
\begin{tabular}{p{1cm}<{\centering}p{1cm}<{\centering}p{1cm}<{\centering}p{1cm}<{\centering}p{1cm}<{\centering}p{1cm}<{\centering}p{1cm}<{\centering}p{1cm}<{\centering}<{\centering}p{1cm}<{\centering}p{1cm}<{\centering}}
\hline \specialrule{0pt}{2pt}{2pt}
$\gamma$\textbackslash$N$ & $16$ & $32$ & $64$ & $
128$ & $256$ & $512$ & $1024$ & $2048$ & $4096$ \\ \specialrule{0pt}{1.5pt}{1.5pt}\hline		\specialrule{0pt}{2pt}{2pt}
2 & 33 & 51 & 64 & 74 & 86 & 94 & 101 & 110 & 118 \\
 \specialrule{0pt}{1.5pt}{1.5pt}
4 & 64 & 109 & 145 & 173 & 200 & 228 & 254 & 280 & 306 \\
 \specialrule{0pt}{1.5pt}{1.5pt}
6 & 97 & 177 & 237 & 292 & 344 & 396 & 451 & 505 & 561 \\
 \specialrule{0pt}{1.5pt}{1.5pt}
8 & 128 & 252 & 345 & 433 & 519 & 604 & 696 & 784 & 879 \\
 \specialrule{0pt}{1.5pt}{1.5pt}
10 & 161 & 334 & 469 & 593 & 717 & 845 & 980 & 1119 & 1260 \\
\hline   \specialrule{0pt}{2pt}{2pt}
\end{tabular}
\end{table}
\begin{figure}[H]
\centering
\includegraphics[height=6cm,width=6.8cm]{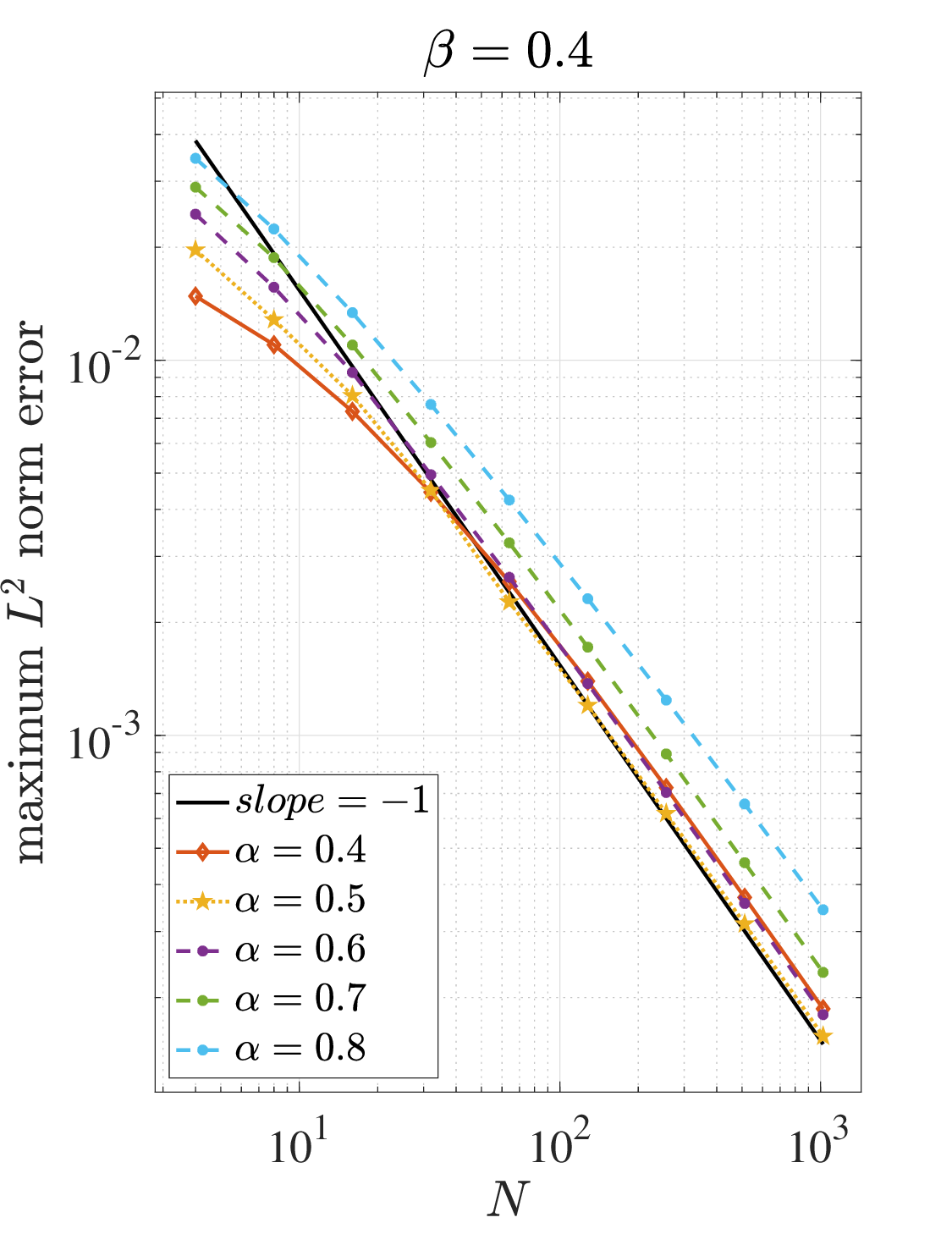}
\includegraphics[height=6cm,width=6.8cm]{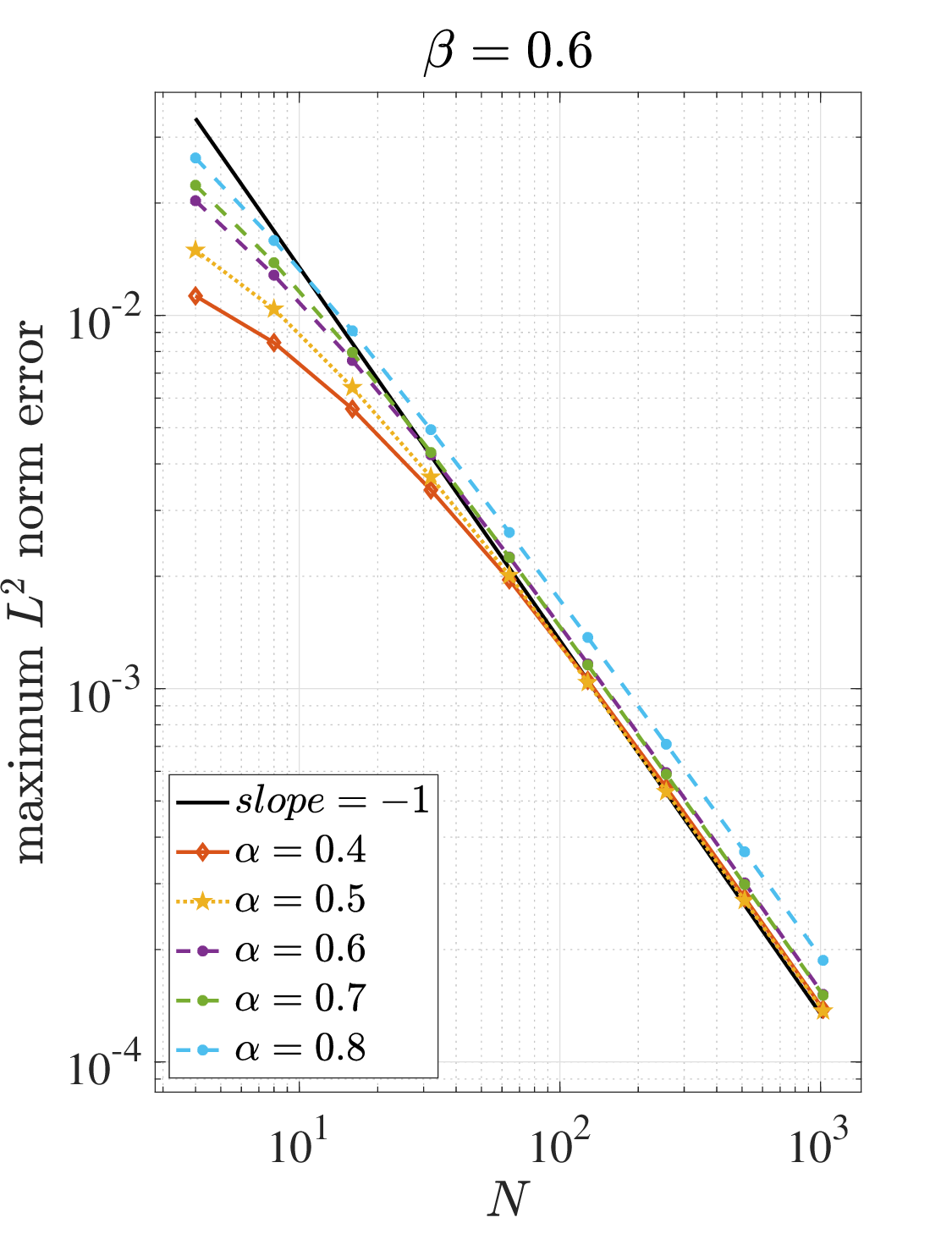}
 \caption{Maximum $L^2$ norm error  for Example \ref{ex_PDE_u01} with $\gamma=\max\left(\frac{1}{\alpha},\frac{1}{\beta}\right)$, $\Delta_x=\frac{1}{3000}$.}
 \label{fig:PDEu01_order}
    \end{figure}

\begin{figure}[H]
\centering
 \includegraphics[height=6cm,width=6.8cm]{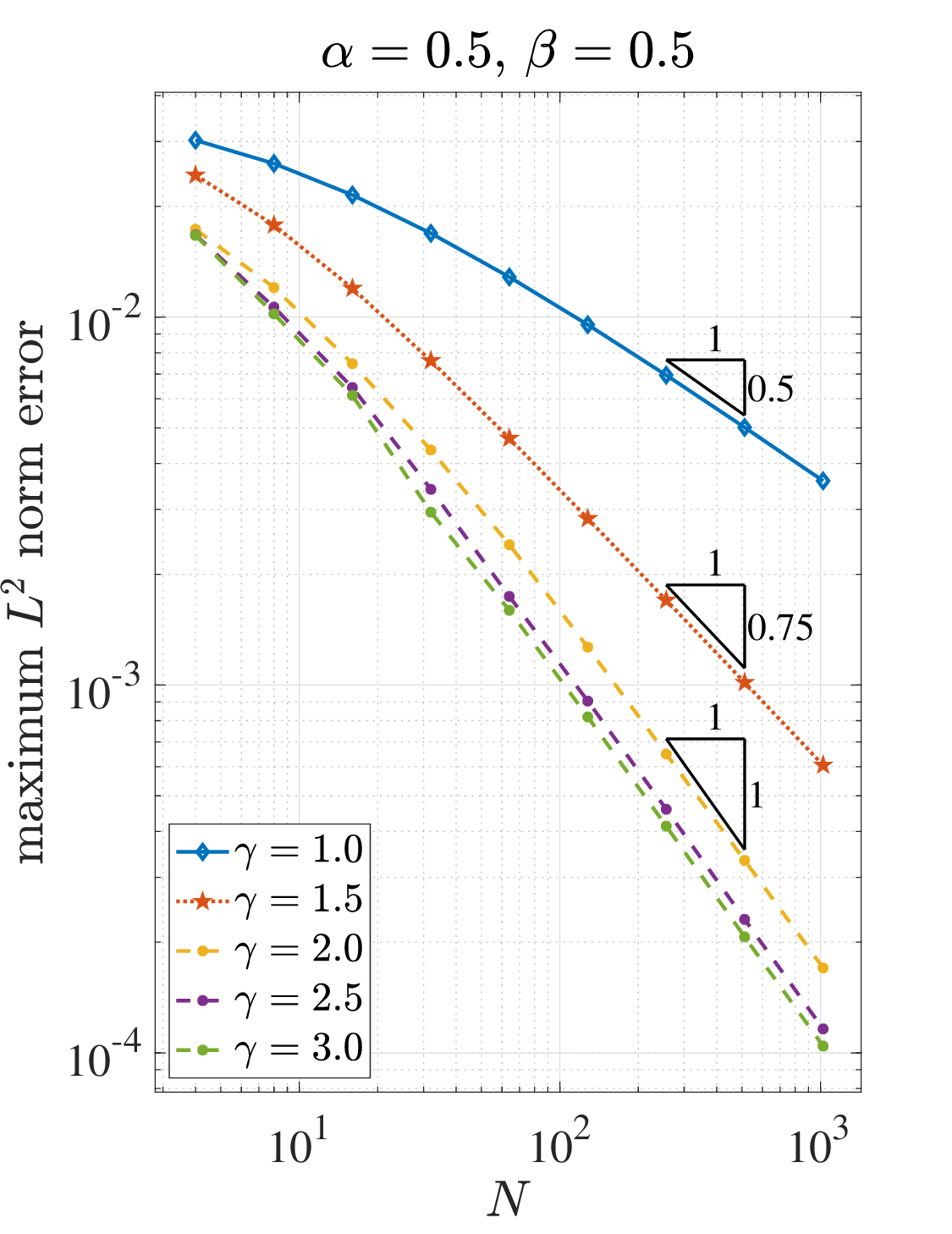}
 \includegraphics[height=6cm,width=6.8cm]{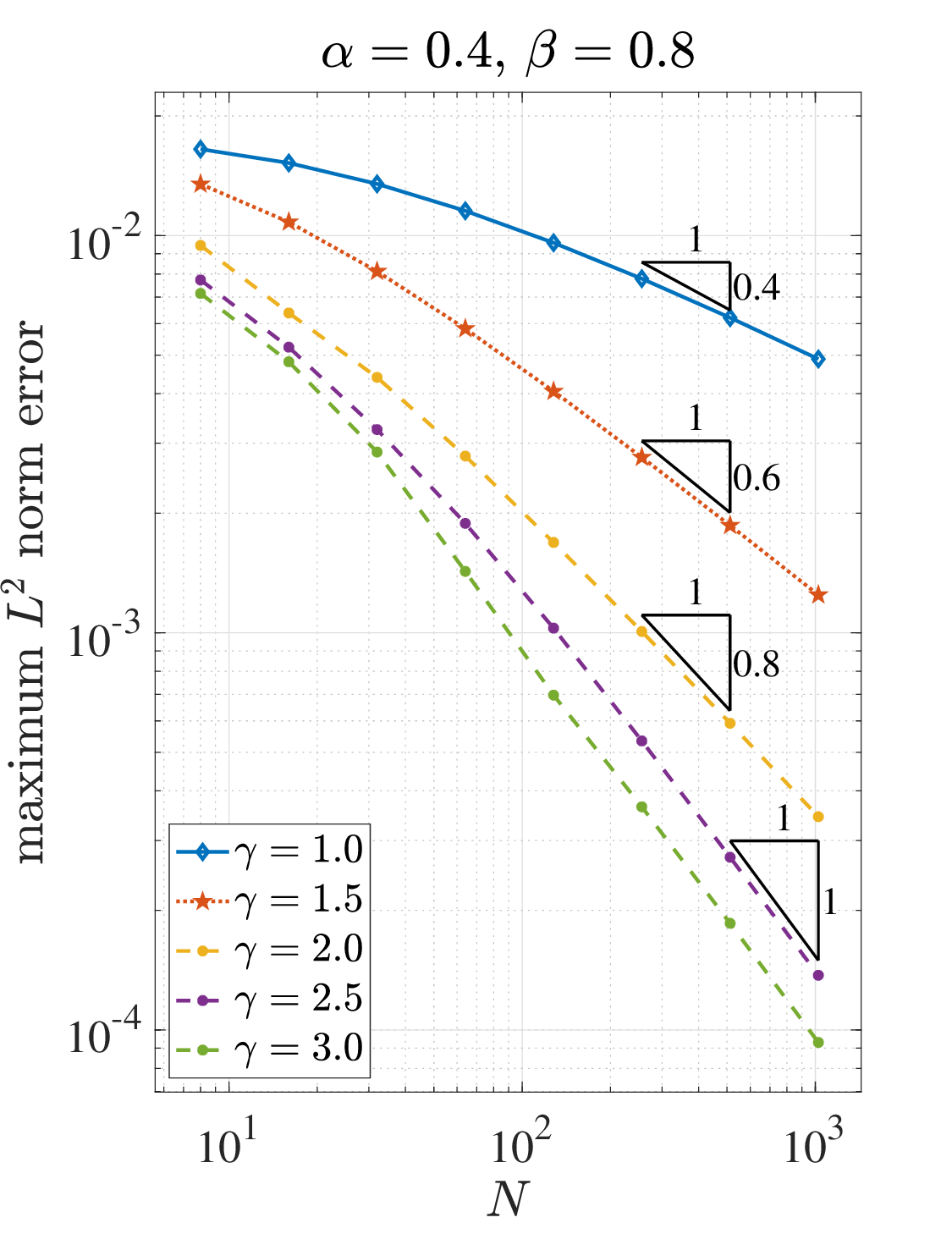}
    \caption{Maximum $L^2$ norm error  for Example \ref{ex_PDE_u01} with  $\Delta_x=\frac{1}{3000}$.}
 \label{fig:PDEu01_order_gam}
    \end{figure}

The results in Figure~\ref{fig:PDEu01_order} confirm that FEM-gCQ scheme for \eqref{PDE_nons} is convergent with full order one on the graded mesh \eqref{gmesh}, with   $\gamma=\max\left(1/\alpha,1/\beta\right)$. Figure~\ref{fig:PDEu01_order_gam} shows for different values of  $\gamma$ the convergence order of this method, which is equal to $\min\{1, \gamma \alpha,\gamma \beta\}$.

\section{Acknowledgements}
The authors acknowledge very useful discussions with Lehel Banjai, whose valuable comments have helped significantly to improve the presentation of this paper. They also acknowledge the constructive and useful comments of the anonymous referees.

The research of the first author has been mostly developed during a stay at University of Malaga supported by  the China Scholarship Council (CSC) (No. 202106720028). The second author has been supported by the ``Beca Leonardo for Researchers and Cultural Creators 2022'' granted by the BBVA Foundation and by the ``Proyecto 16 - proyecto G Plan Propio" of the University of Malaga. The BBVA Foundation takes no responsibility for the opinions, statements and contents of this project, which are entirely the responsibility of its authors.

\section{Data availability}
The codes implementing the examples discussed in this article are available upon request to the authors, exclusively for academic use. No other data are associated to the manuscript.

\appendix

\section{Continuity of the fractional derivative}\label{sec:regularity}
We adopt here the notation in \cite{SamkoKilbasMarichev}.
Let us denote $H^{\lambda}([0,T])$ the space of H\"older continuous functions $f(t)$ of order $0 < \lambda \le 1$, this is
\[
|f(t_1)-f(t_2)| \le A |t_1-t_2|^{\lambda},
\]
for some constant $A$. Let $\rho(x)$ be a non negative function. We denote $H^{\lambda}(\rho)= H^{(0,T);\lambda}(\rho)$ the space of functions $f$ such that $\rho f \in H^{\lambda}$, this is, $f \in H^{\lambda}(\rho)$ if
\[
f(t) = \frac{f_0(t)}{\rho(t)},  \mbox{ with } \ f_0(t) \in H^{\lambda}.
\]
Assume now that
\[
\rho(t) = \prod_{k=1}^{K} (t-t_k)^{\mu_k},
\]
for some $\mu_k \in \bR$ and $t_k\in [0,T]$. Then we denote $H^{\lambda}_0(\rho)=H^{\lambda}_0([0,T];\rho)$ the space of functions $f \in H^{\lambda}([0,T];\rho)$ such that $f_0(t_k)=0$.

For $\lambda=m+\sigma$ with $m=0,1,2,\dots$ and $0< \sigma \le 1$, we say that $f\in H^{\lambda}([0,T])$ if $f\in C^{m}([0,T])$ and $f^{(m)}\in H^{\sigma}([0,T])$. It is clear that $C^{m+1}([0,T]) \subset H^{\lambda}([0,T])$.

Assume now $f(t)=t^{\beta} g(t)$ with $g\in C^{m+1}([0,T])$, $m\ge 0$, and $\beta >-1$, $\beta \notin \bZ$. By using the Taylor expansion of $g$ we can write
\[
g(t) = \sum_{\ell=0}^{m} \frac{ g^{(\ell)}(0)}{\ell!} t^{\ell} + \bar{g}(t),
\]
with the remainder $\bar{g} \in C^{m+1}([0,T])$, satisfying $\bar{g}^{(\ell)}(0)=0$, for $\ell=0,\dots,m$. Then
\[
f(t) = \sum_{\ell=0}^{m} \frac{g^{(\ell)}(0)}{\ell!} t^{\beta+\ell} + t^{\beta} \bar{g}(t).
\]
Now, for  $\nu=\lceil \beta+p \rceil$, with $p\in \bN$, we compute
\[
f^{(\beta+p)}(t) =  \sum_{\ell=0}^{m} \frac{g^{(\ell)}(0)}{\ell!} \frac{\Gamma(\beta+\ell+1)}{\Gamma(\nu-p+\ell+1)} \frac{d^{\nu}}{dt^{\nu}} t^{\ell+\nu-p} + \frac{d^{\nu}}{dt^{\nu}} \mathcal{I}^{\nu-(\beta+p)}[t^{\beta} \bar{g}](t).
\]
Since the integer exponents $\ell+\nu-p \ge 0$, the addends in the summation above are either $0$ or continuous in $[0,T]$. Let us now consider the fractional integral of order $\alpha \in(0,1)$ of the function $t^{\beta}\bar{g}$, which is given by
\[
\partial_t^{-\alpha}\left[ t^{\beta} \bar{g}(t) \right] = \frac{1}{\Gamma(\alpha)}\int_0^t (t-s)^{\alpha-1} s^{\beta}\bar{g}(s)\,ds.
\]
Integration by parts and the vanishing moments of $\bar{g}$ at the origin imply that
\[
\partial_t^{-\alpha}\left( t^{\beta} \bar{g}(t) \right) =\frac{1}{\Gamma(\alpha+m+1)}\int_0^t (t-s)^{\alpha+m} \partial_s^{m+1}\left[s^{\beta}\bar{g}(s)\right]\,ds.
\]
We can differentiate up to $m+1$ times with respect to $t$ in the expression above, while keeping a convergent integral. Differentiation $m+1$ times gives
\[
\partial_t^{m+1}\partial_t^{-\alpha}\left( t^{\beta} \bar{g}(t) \right) = \frac{1}{\Gamma(\alpha)}\int_0^t (t-s)^{\alpha-1} \partial_s^{m+1}\left[s^{\beta}\bar{g}(s)\right]\,ds = \partial_t^{-\alpha}\partial_t^{m+1}\left( t^{\beta} \bar{g}(t) \right).
\]
The fractional derivative above is H\"older continuous if $\bar{g}^{(m+1)}$ is H\"olderian, by \cite[Lemma 13.2]{SamkoKilbasMarichev}, this is $\bar{g} \in H^{m+1+\sigma}$ for some $\sigma \in (0,1)$ and, in particular, if $g\in C^{m+2}([0,T])$.

%\printbibliography
\bibliographystyle{abbrv}
\bibliography{bibmlf.bib}

\end{document}